\def\subsection{\@startsection{subsection}{2}%
  \z@{.7\linespacing\@plus.7\linespacing}{.2\linespacing}%
  {\centering\normalfont\scshape}}
\newtheorem{theorem}{Theorem}[section]
\newtheorem{proposition}[theorem]{Proposition}
\newtheorem{lemma}[theorem]{Lemma}
\numberwithin{equation}{section}
\newcommand{\abs}[1]{\lvert#1\rvert}
\newcommand{\floor}[1]{\lfloor#1\rfloor}
\newcommand{\ang}[1]{\langle#1\rangle}
\newcommand{\e}{\epsilon}
\renewcommand{\d}{\delta}
\renewcommand{\t}{\tau}
\DeclareMathOperator{\Tr}{Tr}
\DeclareMathOperator{\tr}{tr}
\DeclareMathOperator{\rank}{rank}
\DeclareMathOperator{\GL}{GL}
\DeclareMathOperator{\wt}{wt}
\newcommand{\F}{\mathbb{F}}
\newcommand{\Z}{\mathbb{Z}}
\newcommand{\C}{\mathcal{C}}
\renewcommand{\L}{\mathcal{L}}
\begin{document}

\title[Symmetric bilinear forms over finite fields]{Symmetric bilinear forms over finite fields\\with applications to coding theory}

\author{Kai-Uwe Schmidt}
\address{Faculty of Mathematics, Otto-von-Guericke University, Universit\"atsplatz~2, 39106 Magdeburg, Germany}
\email{kaiuwe.schmidt@ovgu.de}

\date{25 October 2014}

\subjclass[2010]{Primary: 15A63, 05E30; Secondary: 11T71, 94B15}

\begin{abstract}
Let $q$ be an odd prime power and let $X(m,q)$ be the set of symmetric bilinear forms on an $m$-dimensional vector space over $\F_q$. The partition of $X(m,q)$ induced by the action of the general linear group gives rise to a commutative translation association scheme. We give explicit expressions for the eigenvalues of this scheme in terms of linear combinations of generalised Krawtchouk polynomials. We then study $d$-codes in this scheme, namely subsets $Y$ of $X(m,q)$ with the property that, for all distinct $A,B\in Y$, the rank of $A-B$ is at least $d$. We prove bounds on the size of a $d$-code and show that, under certain conditions, the inner distribution of a $d$-code is determined by its parameters. Constructions of $d$-codes are given, which are optimal among the $d$-codes that are subgroups of $X(m,q)$. Finally, with every subset~$Y$ of $X(m,q)$, we associate two classical codes over $\F_q$ and show that their Hamming distance enumerators can be expressed in terms of the inner distribution of $Y$. As an example, we obtain the distance enumerators of certain cyclic codes, for which many special cases have been previously obtained using long ad hoc calculations.
\end{abstract}

\maketitle

\section{Introduction}

Let $q$ be an odd prime power and let $X(m,q)$ be the set of symmetric bilinear forms on an $m$-dimensional vector space over $\F_q$. Note that we can identify $X(m,q)$ with the set of $m\times m$ symmetric matrices over $\F_q$. In this paper, we are mainly concerned with $d$-codes in $X(m,q)$, namely subsets $Y$ of $X(m,q)$ with the property that, for all distinct $A,B\in Y$, the rank of $A-B$ is at least $d$. In particular, for given~$m$ and $d$, we are interested in $d$-codes containing as many elements as possible. One of our motivations for this problem is a link between such $d$-codes and classical coding theory.
\par
The corresponding problem for finite fields of characteristic two has been studied in~\cite{Sch2010}, by exploiting close relationships between symmetric and alternating bilinear forms over finite fields of characteristic two and by building heavily on results for the association scheme of alternating bilinear forms~\cite{DelGoe1975}. The main tool of this paper is the association scheme of symmetric bilinear forms, which has been introduced in~\cite{HuoWan1993} and has been studied further in~\cite{WanWanMaMa2003}. This association scheme differs considerably from the classical association schemes used in coding theory, namely the Hamming scheme~\cite{Del1973} and its $q$-analog (or the association scheme of bilinear forms)~\cite{Del1978}, the Johnson scheme~\cite{Del1973} and its $q$-analog~\cite{Del1978a}, and the association scheme of alternating bilinear forms~\cite{DelGoe1975}. While these classical schemes are symmetric ($P$- and $Q$-) polynomial schemes, the association scheme of symmetric bilinear forms does not have such polynomial properties and is, in general, not symmetric.
\par
The association scheme of symmetric bilinear forms is reviewed in Section~\ref{sec:scheme}. Our key advance is the computation of its eigenvalues (also called the $P$- and $Q$-numbers). Although this association scheme is not polynomial and so the eigenvalues do not simply arise from evaluations of sets of orthogonal polynomials (see~\cite{Del1976} and~\cite{Sta1981} for the classical association schemes used in coding theory), they can still be written as linear combinations of evaluations of generalised Krawtchouk polynomials.
\par
In Section~\ref{sec:properties}, we use the knowledge of the eigenvalues to obtain bounds on the size of $d$-codes in $X(m,q)$. In particular, we prove that every $d$-code $Y$ that is an additive subgroup of $X(m,q)$ satisfies
\begin{equation}
\abs{Y}\le
\begin{cases}
q^{m(m-d+2)/2}     & \text{for $m-d$ even}\\[2ex]
q^{(m+1)(m-d+1)/2} & \text{for $m-d$ odd}.
\end{cases}   \label{eqn:bound}
\end{equation}
In Section~\ref{sec:constructions}, we provide constructions of subgroups of $X(m,q)$ that satisfy these bounds with equality. In the case that $d$ is odd (which appears to be the simpler situation), we prove that the bound~\eqref{eqn:bound} also holds for $d$-codes that are not necessarily subgroups of $X(m,q)$ and that, in case of equality in~\eqref{eqn:bound}, the inner distribution of~$Y$ is uniquely determined. We also give explicit expressions for the inner distribution in this case. The situation is quite different for even $d$. If $d$ is even and equality holds in~\eqref{eqn:bound}, then the inner distribution of $Y$ is, in general, not uniquely determined. However we give explicit expressions for the inner distributions of our constructions. In the case that $d$ is even, it is also an open question as to whether there exist $d$-codes~$Y$ in $X(m,q)$ that violate the bound~\eqref{eqn:bound}, and so are better than $d$-codes that are subgroups of $X(m,q)$. 
\par
In Section~\ref{sec:coding} we apply our results to classical coding theory. With every subset~$Y$ of $X(m,q)$ we associate two codes over $\F_q$ of length $q^m-1$ and show that the distance enumerators of these codes can be expressed in terms of the inner distribution of~$Y$. This provides a general approach to establish the distance enumerator of codes derived from symmetric bilinear (or quadratic) forms. For example, we immediately obtain the distance enumerator of a family of cyclic codes corresponding to the constructions of Section~\ref{sec:constructions}, for which many special cases have been previously obtained ~\cite{FenLuo2008},~\cite{LuoFen2008},~\cite{LiuLuo2014},~\cite{LiuLuo2014a},~\cite{LiuHan2013},~\cite{ZheWanZenHu2013},~\cite{LiuYanLiu2014} using long ad hoc calculations.
\par
It should be noted that the problems studied in this paper have natural analogs involving unrestricted bilinear forms and alternating bilinear forms, which have been studied in~\cite{Del1978} and~\cite{DelGoe1975}, respectively. These theories also have interesting applications in classical coding theory. As mentioned above, the main difference lies in the structure of the underlying association schemes, which makes the analysis considerably more difficult in the case of symmetric bilinear forms. It should also be noted that related, but different, rank properties of sets of symmetric bilinear forms have been studied in~\cite{DumGowShe2011}.


\section{The association scheme of symmetric bilinear forms}
\label{sec:scheme}

We begin with recalling some well known facts about association schemes in general and about the association scheme of symmetric bilinear forms in particular. For more background on association schemes and connections to coding theory we refer to~\cite{Del1973},~\cite{DelLev1998}, and~\cite{MarTan2009} and to~\cite[Chapter 21]{MacSlo1977} and~\cite[Chapter 30]{vLiWil2001} for gentle introductions. For more background on the association scheme of symmetric bilinear forms we refer to~\cite{HuoWan1993} and~\cite{WanWanMaMa2003}.
\par
An \emph{association scheme} with $n$ classes is a pair $(X,(R_i))$, where $X$ is a finite set and $R_0,R_1,\dots,R_n$ is a partition of $X\times X$, satisfying
\begin{enumerate}
\item[(A1)] $R_0=\{(x,x):x\in X\}$;
\item[(A2)] for each $i$, there exists $j$ such that the inverse of $R_i$ equals $R_j$;
\item[(A3)] if $(x,y)\in R_k$, then the number of $z\in X$ such that $(x,z)\in R_i$ and $(z,y)\in R_j$ is a constant $p^k_{ij}$ depending only on $i$, $j$, and $k$, but not on the particular choice of $x$ and~$y$.
\end{enumerate}
If the inverse of $R_i$ equals $R_i$ for all $i$, then the association scheme is called \emph{symmetric}, and if $p^k_{ij}=p^k_{ji}$ for all $i$, $j$, and $k$, then it is called \emph{commutative}. Note that symmetry implies commutativity.
\par
Let $(X,(R_i))$ be a commutative association scheme with $n$ classes. Let $A_i$ be the adjacency matrix of the digraph $(X,R_i)$. The vector space generated by $A_0,A_1,\dots,A_n$ over the complex numbers has dimension $n+1$ and is called the \emph{Bose-Mesner algebra} of the association scheme. There exists another uniquely defined basis for this vector space, consisting of minimal idempotent matrices $E_0,E_1,\dots,E_n$. We may write
\[
A_i=\sum_{k=0}^nP_i(k)E_k\quad\text{and}\quad E_k=\frac{1}{\abs{X}}\sum_{i=0}^nQ_k(i)A_i
\]
for some uniquely determined numbers $P_i(k)$ and $Q_k(i)$, called the \emph{$P$-numbers} and the \emph{$Q$-numbers} of $(X,(R_i))$, respectively. If there is an ordering of $E_0,E_1,\dots,E_n$ such that $P_k(i)=Q_i(k)$ for all $i$ and $k$, then the association scheme is called \emph{formally self-dual}. 
\par
Next we recall the definition of the association scheme of symmetric bilinear forms. Let $q$ be an odd prime power and let $\F_q$ be the finite field with $q$ elements. Let $V=V(m,q)$ be a vector space over $\F_q$ of dimension $m$ and let $X=X(m,q)$ be the set of symmetric bilinear forms defined on $V$. Note that $X$ itself is a vector space over $\F_q$ of dimension $m(m+1)/2$.
\par
Let $G=\GL(V)\rtimes X$ be the semidirect product of $\GL(V)$ and $X$. Then $G$ is a permutation group that acts transitively on $X$ as follows
\begin{equation}
\begin{split}
G\times X&\to X\\
((T,A),B)&\mapsto B'+A,
\end{split}
\label{eqn:group_action}
\end{equation}
where $B'(x,y)=B(Tx,Ty)$. The action of $G$ extends to $X\times X$ componentwise and so partitions $X\times X$ into orbits, which define the relations of a commutative association scheme~\cite{HuoWan1993}. It is well known~\cite{WanWanMaMa2003} that this association scheme has $2m$ classes and is formally self-dual (in fact it is self-dual in the strong sense of~\cite[Definition~11]{DelLev1998}). It is also well known~\cite{HuoWan1993} (and indeed easily verified) that, if $q\not\equiv 3\pmod 4$, then this association scheme is symmetric (and hence commutative), otherwise it is not symmetric, yet commutative.\footnote{When $q$ is a power of two, a symmetric association scheme with $m+\floor{m/2}$ classes arises in the same way~\cite{WanMa1996}. Except if $(m,q)=(2,2)$, this association scheme is not formally self-dual~\cite{WanWanMaMa2003}.}
\par
We are going to describe the relations explicitly, following~\cite{HuoWan1993}. Let $\alpha_1,\dots,\alpha_m$ be a basis for $V$ over $\F_q$. It will often be convenient to identify a symmetric bilinear form $B\in X$ with the $m\times m$ symmetric matrix
\begin{equation}
(B_{ij}=B(\alpha_i,\alpha_j))_{1\le i,j\le m}.   \label{eqn:bilinear_to_matrix}
\end{equation}
Since $B$ is bilinear, this matrix describes the mapping $B$ completely. The rank of this matrix is independent of the choice of the basis and is defined to be the \emph{rank} of~$B$. It is well known (see~\cite[Theorem~11.5]{Wan2003}, for example) that there exists a basis for $V$ over $\F_q$ such that the matrix of $B$ is a diagonal matrix, whose diagonal is $[z,1,\dots,1,0,\dots,0]$ for some $z\in\F_q$. Let $\eta$ be the quadratic character of $\F_q$. Then $\eta(z)$ is well defined and called the \emph{type} of $B$ (an empty product equals $1$ by convention and so the all-zero matrix has type $1$).
\par
Let $X_{i,\t}$ be the set of all elements of $X$ that have rank $i$ and type $\t$. Then, after a suitable ordering of the relations,
\begin{equation}
R_{i,\t}=\{(A,B)\in X\times X:A-B\in X_{i,\t}\}    \label{eqn:R_X}
\end{equation}
are the orbits under the action of $G$ on $X\times X$, and so $(X,(R_{i,\t}))$ is the association scheme of symmetric bilinear forms on $V$. For the remainder of this paper we assume this natural ordering of the relations. By~\eqref{eqn:R_X}, the relation containing $(A,B)$ depends only on $A-B$, which is the defining property of a translation scheme.
\par
The numbers
\[
v(i,\t)=\abs{X_{i,\t}}
\]
are called the \emph{valencies} of $(X,(R_{i,\t}))$, which have been already computed in~\cite{Car1954} (see also~\cite[Proposition~2]{HuoWan1993}). In what follows we continue to use $\eta$ for the quadratic character of $\F_q$.
\begin{proposition}[{\cite[Theorem~3]{Car1954}}]
\label{pro:num_sym_mat}
We have
\begin{align*}
v(2s,\t)&=\frac{(q^s+\eta(-1)^s\t)}{2}\,\frac{(q^m-1)(q^m-q)\cdots(q^m-q^{2s-1})}{(q^{2s}-1)(q^{2s}-q^2)\cdots(q^{2s}-q^{2s-2})},\\[1ex]
v(2s+1,\t)&=\frac{1}{2q^s}\,\frac{(q^m-1)(q^m-q)\cdots(q^m-q^{2s})}{(q^{2s}-1)(q^{2s}-q^2)\cdots(q^{2s}-q^{2s-2})}.
\end{align*}
\end{proposition}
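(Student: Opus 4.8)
The plan is to reduce the count of symmetric forms of a given rank to a count of \emph{nondegenerate} forms, and then to compute the latter by classifying them up to congruence.

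First I would exploit the radical. For a symmetric bilinear form $B$ of rank $i$ on $V$, write $W=\{x\in V:B(x,y)=0\text{ for all }y\in V\}$ for its radical, a subspace of dimension $m-i$. Then $B$ induces a nondegenerate symmetric form $\bar B$ on the quotient $V/W$, and $B(x,y)=\bar B(x+W,y+W)$. Conversely, any pair consisting of a subspace $W$ of dimension $m-i$ and a nondegenerate symmetric form on $V/W$ determines such a $B$, and diagonalising shows that $B$ and $\bar B$ have the same type. This sets up a bijection between $X_{i,\t}$ and the set of such pairs, so that
\[
v(i,\t)=\binom{m}{i}_q\,S(i,\t),
\]
where $\binom{m}{i}_q$ is the number of $(m-i)$-dimensional subspaces of $V$ and $S(i,\t)$ denotes the number of nonsingular $i\times i$ symmetric matrices $M$ over $\F_q$ of type $\t$, that is, with $\eta(\det M)=\t$. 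Here the numerator of $\binom{m}{i}_q$ contributes the $\t$-independent factor $\prod_{j=0}^{i-1}(q^m-q^j)$ appearing in the proposition, so everything reduces to evaluating $S(i,\t)$.

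Next I would compute $S(i,\t)$ by letting $\GL(V)$ (in the guise of $\GL_i(\F_q)$) act on nondegenerate symmetric forms by congruence $M\mapsto T^{\top}MT$. By the classification of such forms (\cite[Theorem~11.5]{Wan2003}), this action has exactly two orbits, distinguished precisely by the type $\eta(\det M)$, and the stabiliser of a form is its orthogonal group. Hence $S(i,\t)=\abs{\GL_i(\F_q)}/\abs{O}$ for the appropriate orthogonal group $O$, and substituting the known orders of $\GL_i(\F_q)$ and of the orthogonal groups yields the claimed expressions after simplification. For even $i=2s$ the two orbits correspond to the groups $O^{+}_{2s}$ and $O^{-}_{2s}$, whose orders differ only in the factors $q^s-1$ and $q^s+1$; since a form of maximal Witt index is an orthogonal sum of $s$ hyperbolic planes and the hyperbolic plane has discriminant of square class $\eta(-1)$, the maximal-index orbit carries type $\eta(-1)^s$. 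This is exactly what produces the factor $\tfrac12\bigl(q^s+\eta(-1)^s\t\bigr)$ and the survival of only the even-indexed terms $q^{2s}-q^{2k}$ in the denominator. For odd $i=2s+1$ the two forms of opposite type are scalar multiples of one another, hence have isomorphic (equal-order) orthogonal groups; consequently $S(2s+1,\t)$ does not depend on $\t$, which matches the $\t$-independence of $v(2s+1,\t)$.

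The routine part of the argument is the bookkeeping of the Gaussian binomial and of the plain group orders. The main obstacle is the refinement by type: one must track the discriminant $\eta(\det M)$ through the Witt decomposition and pin down which orbit carries which type, since this is what generates the delicate factor $\eta(-1)^s$. If one prefers a self-contained route that avoids quoting orthogonal-group orders, I would instead induct on $i$, peeling off either a hyperbolic plane (relating rank $i$ to rank $i-2$) or an anisotropic line, and evaluate the resulting counts of vectors of prescribed norm by quadratic Gauss sums; there the sign $\eta(-1)$ contributed by each hyperbolic plane is the source of $\eta(-1)^s$, and the recursion reproduces both $S(i,\t)$ and, via the bijection above, the stated valencies.
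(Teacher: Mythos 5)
Your argument is correct. Note first that the paper itself gives no proof of this proposition: it is quoted directly from Carlitz \cite[Theorem~3]{Car1954}, whose original derivation proceeds by recursions on exponential sums rather than by the group action you use. Your reduction via the radical is sound (the radical of the form induced from a nondegenerate form on $V/W$ is exactly $W$, so the correspondence with pairs is indeed a bijection and contributes the Gaussian binomial $\binom{m}{i}_q$, whose numerator supplies the factor $(q^m-1)(q^m-q)\cdots(q^m-q^{i-1})$ while its denominator cancels against the $\abs{\GL_i(\F_q)}$ from orbit--stabiliser), and your identification of the orbits is the one delicate point and you get it right: the split form is the orthogonal sum of $s$ hyperbolic planes, each of determinant $-1$, so the orbit with stabiliser $O^+_{2s}$ (order $2q^{s(s-1)}(q^s-1)\prod_{j=1}^{s-1}(q^{2j}-1)$) carries type $\eta(-1)^s$; this reproduces the factor $\tfrac12(q^s+\eta(-1)^s\t)$, and the scalar-multiple argument correctly explains the $\t$-independence in odd rank. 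A quick check confirms the bookkeeping: $\prod_{k=0}^{s-1}(q^{2s}-q^{2k})=q^{s(s-1)}\prod_{j=1}^{s}(q^{2j}-1)$, which matches the orthogonal group orders in both parities. The only caveat is that the orders of $O^{\pm}_{2s}(q)$ and $O_{2s+1}(q)$ are themselves nontrivial inputs of roughly the same depth as the statement being proved, so as written your proof trades one classical count for another; the inductive variant you sketch at the end (peeling off hyperbolic planes and counting norm-$h$ vectors by Gauss sums) is essentially Carlitz's route and would make the argument self-contained.
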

\par
In the remainder of this section we establish the $P$-numbers and the $Q$-numbers of $(X,(R_{i,\t}))$. Since $(X,(R_{i,\t}))$ is a translation scheme, these numbers can be expressed in terms of characters of $(X,+)$ (see~\cite[Section~V]{DelLev1998} or~\cite[Section 6]{MarTan2009} for a detailed treatment). We shall identify  symmetric bilinear forms with their matrices~\eqref{eqn:bilinear_to_matrix}. 
\par
Let $\chi:\F_q\to\mathbb{C}$ be a nontrivial character of $(\F_q,+)$ and, for $A,B\in X$, write
\begin{equation}
\langle A,B\rangle=\chi(\tr(AB)),   \label{eqn:def_inner_product}
\end{equation}
where $\tr$ is the matrix trace. For all $A,A',B\in X$, we have
\begin{equation}
\langle A+A',B\rangle=\langle A,B\rangle\langle A',B\rangle.   \label{eqn:ip_homomorphism}
\end{equation}
Indeed, it is readily verified that the mapping $\langle \,\cdot\,,B\rangle$ ranges through all characters of $(X,+)$ as $B$ ranges over $X$. It can be shown~\cite[Proposition~3.2]{WanWanMaMa2003} that the numbers
\begin{equation}
P_{i,\t}(k,\e)=\sum_{A\in X_{i,\t}}\overline{\langle A,B\rangle}\quad\text{for $B\in X_{k,\e}$}   \label{eqn:P_numbers}
\end{equation}
are well defined. Indeed $P_{i,\t}(k,\e)$ are the $P$-numbers and
\begin{equation}
Q_{k,\e}(i,\t)=\overline{P_{k,\e}(i,\t)}   \label{eqn:Q_from_P}
\end{equation}
are the $Q$-numbers of $(X,(R_{i,\t}))$  (see~\cite{DelLev1998} or~\cite{MarTan2009}, for example). In order to give explicit expressions for the $Q$-numbers (and therefore the $P$-numbers), we need \emph{$q^2$-analogs of binomial coefficients}, which are defined by
\[
{n\brack k}=\prod_{i=1}^k(q^{2n-2i+2}-1)/(q^{2i}-1)
\]
for integral $n$ and $k$ with $k\ge 0$. We also need the following numbers, which can be derived from generalised Krawtchouk polynomials~\cite{DelGoe1975},~\cite{Del1976}. We define 
\[
F^{(m)}_r(s)=\sum_{j=0}^r(-1)^{r-j}q^{(r-j)(r-j-1)}{n-j\brack n-r}{n-s\brack j}\, c^j,
\]
where
\[
n=\left\lfloor m/2\right\rfloor,\quad\text{and}\quad c=q^{m(m-1)/(2n)},
\]
whenever this expression is defined and let $F^{(m)}_r(s)=0$ otherwise. Equivalently, these numbers can be defined via the $n+1$ equations 
\begin{equation}
\sum_{r=0}^j{n-r\brack n-j} F^m_r(s)={n-s\brack j}c^j\quad\text{for $j\in\{0,1,\dots,n\}$}   \label{eqn:ev_transform}
\end{equation}
(see~\cite[(29)]{DelGoe1975}). Let
\begin{equation}
\gamma_q=\sum_{y\in\F_q^*}\eta(y)\chi(y)   \label{eqn:def_gauss_sum}
\end{equation}
be a quadratic Gauss sum, which can be explicitly evaluated\footnote{The evaluation depends on the choice of $\chi$ and so does~\eqref{eqn:def_inner_product}. According to Theorem~\ref{thm:eigenvalues}, different choices for $\chi$ can swap the roles of $Q_{2r+1,+1}(2s+1,\t)$ and $Q_{2r+1,-1}(2s+1,\t)$ and so can change the order of the idempotent basis for the Bose-Mesner algebra.} (see~\cite[Theorem~5.12]{LidNie1997}, for example).
\par
We are now ready to give explicit expressions for the $Q$-numbers of $X$.
\begin{theorem}
\label{thm:eigenvalues}
The $Q$-numbers of $X(m,q)$ are as follows. We have $Q_{0,1}(i,\t)=1$ and $Q_{k,\e}(0,1)=v(k,\e)$, and for $k,i\ge 1$, the numbers $Q_{k,\e}(i,\t)$ are given by
\begin{align*}
2Q_{2r+1,\e}(2s+1,\t)&=-q^{2r}F^{(m-1)}_r(s)+\e\t\, \eta(-1)^{s+r}\,q^{m-s+r-1}\,\gamma_q\,F^{(m-1)}_r(s),\\[1.5ex]
2Q_{2r,\e}(2s+1,\t)&=q^{2r}F^{(m-1)}_r(s)+\e\,\eta(-1)^rq^rF^{(m)}_r(s),\\[1.5ex]
2Q_{2r+1,\e}(2s,\t)&=-q^{2r}F^{(m-1)}_r(s-1)+\t\,\eta(-1)^sq^{m-s+2r}F^{(m-2)}_r(s-1),\\[1.5ex]
2Q_{2r,\e}(2s,\t)&=q^{2r}F^{(m-1)}_r(s-1)-\t\,\eta(-1)^sq^{m-s+2r-2}F^{(m-2)}_{r-1}(s-1)\\[.5ex]
&\qquad+\e\,\eta(-1)^rq^rF^{(m)}_r(s).
\end{align*}
\end{theorem}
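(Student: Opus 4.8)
The plan is to evaluate each $Q$-number directly as an exponential sum. Combining \eqref{eqn:Q_from_P}, \eqref{eqn:P_numbers}, and \eqref{eqn:def_inner_product}, for any fixed $B\in X_{i,\t}$ we have
\[
Q_{k,\e}(i,\t)=\overline{P_{k,\e}(i,\t)}=\sum_{A\in X_{k,\e}}\chi(\tr(AB)),
\]
so the whole theorem reduces to evaluating, for fixed $B$ of rank $i$ and type $\t$, the character sum on the right. First I would remove the type constraint on $A$ by writing the indicator of type $\e$ as $\tfrac12(1+\e\,\t_A)$, where $\t_A$ is the type of $A$. This gives
\[
2Q_{k,\e}(i,\t)=S_k(B)+\e\,T_k(B),
\]
where
\[
S_k(B)=\sum_{\rank A=k}\chi(\tr(AB))
\quad\text{and}\quad
T_k(B)=\sum_{\rank A=k}\t_A\,\chi(\tr(AB)).
\]
This already explains the factor $2$ and the two-term shape of every formula: one contribution comes from the rank-only sum $S_k$, and the other, carried by $\e$ and (in the odd-rank cases) by the Gauss sum, from the type-weighted sum $T_k$.

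The device that controls the type is the classical evaluation of a quadratic Gauss sum. Writing $x^{\mathsf T}Ax=\tr(A\,xx^{\mathsf T})$, one has, for a symmetric $A$ of type $\t_A$,
\[
\sum_{x\in\F_q^m}\chi(x^{\mathsf T}Ax)=
\begin{cases}
\t_A\,\eta(-1)^{r}\,q^{m-r}&\text{if }\rank A=2r,\\
\t_A\,\eta(-1)^{r}\,q^{m-r-1}\,\gamma_q&\text{if }\rank A=2r+1,
\end{cases}
\]
with $\gamma_q$ the Gauss sum of \eqref{eqn:def_gauss_sum} and using $\gamma_q^2=\eta(-1)q$. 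Solving for $\t_A$ and substituting into $T_k(B)$ turns the type weight into a sum over $x$, so that
\[
T_k(B)=c_k\sum_{x\in\F_q^m}\,\sum_{\rank A=k}\chi\bigl(\tr(A(B+xx^{\mathsf T}))\bigr)
=c_k\sum_{x\in\F_q^m}S_k(B+xx^{\mathsf T}),
\]
where $c_k=\eta(-1)^r q^{r-m}$ for $k=2r$ and $c_k=\eta(-1)^r q^{r+1-m}\gamma_q^{-1}$ for $k=2r+1$. Thus $T_k$ is expressed through the rank-only sums $S_k$ evaluated at the rank-one perturbations $B+xx^{\mathsf T}$; sorting $x$ according to whether it lies in the radical of $B$ (which controls whether the rank of $B+xx^{\mathsf T}$ is $i-1$, $i$, or $i+1$, and how the type changes) produces the shifts $s\mapsto s-1$ in the arguments of the $F$-numbers and brings out a single factor $\gamma_q$ exactly in the odd-rank cases.

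It remains to evaluate the rank-only sums $S_k(B)$, and here the generalised Krawtchouk numbers enter through their defining relations \eqref{eqn:ev_transform}. Rather than fixing the rank of $A$, I would first compute the cumulative sums in which $A$ ranges over all symmetric forms vanishing on a prescribed subspace of $V$ (equivalently, over symmetric forms on a quotient of $V$); after diagonalising $B$ and splitting $V$ into the radical of $B$ and a complement of dimension $i$, the Gauss-sum evaluation above collapses these cumulative sums to closed forms of the shape ${n-s\brack j}c^j$. Matching them against the right-hand side of \eqref{eqn:ev_transform} and inverting the unitriangular system with entries ${n-r\brack n-j}$ then expresses $S_k(B)$ through the numbers $F^{(m-1)}_r$ and $F^{(m-2)}_r$, the reduced superscripts reflecting the dimension of the complement that survives after the splitting. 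Substituting the resulting values of $S_k(B+xx^{\mathsf T})$ back into $T_k(B)$, and organising the sum over $x$ by the rank and type of $B+xx^{\mathsf T}$, recombines these into the $F^{(m)}_r$ terms (for even $k$) and the $F^{(m-1)}_r$ term (for odd $k$) that appear in the statement. The four displayed cases correspond precisely to the parities of $k$ and $i$, which govern whether the surviving Gauss-sum power is a genuine $\gamma_q$ or merely a power of $q$ via $\gamma_q^{2}=\eta(-1)q$, whether the $\t$-dependence resides in $S_k$ or in $T_k$, and whether $T_k$ vanishes.

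The step I expect to be the main obstacle is the evaluation of the type-weighted sum $T_k$: one must track how the rank-one perturbation $B\mapsto B+xx^{\mathsf T}$ changes both the rank and the type of $B$ as $x$ moves relative to the radical of $B$ and to the quadratic form carried by $B$, and then reassemble the $F^{(m-1)}_r$, $F^{(m-2)}_r$, and $F^{(m)}_r$ contributions with the correct signs $\eta(-1)^{\bullet}$, powers of $q$, and the single factor $\gamma_q$. All four parity cases, and in particular the three-term expression for $2Q_{2r,\e}(2s,\t)$, emerge from this bookkeeping, which is by far the heaviest part of the argument.
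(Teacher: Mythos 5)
Your opening move coincides with the paper's: the splitting $2Q_{k,\e}(i,\t)=S_k(B)+\e\,T_k(B)$ into a rank-only sum and a type-weighted sum is exactly the paper's decomposition into $S^{(m)}_k=Q_{k,1}+Q_{k,-1}$ and $R^{(m)}_k=Q_{k,1}-Q_{k,-1}$. From there you diverge genuinely. The paper never evaluates these sums directly; it proves a recurrence (Lemma~\ref{lem:rec_P}) expressing $Q^{(m)}_{k,\e}(i,\t)$ through $Q^{(m)}_{k,\e}(i-1,1)$ and $Q^{(m-1)}_{k-1,\pm\e}(i-1,1)$, obtained by bordering $A$, completing the square, and evaluating the Gauss sum over the border vector, and then solves the decoupled recurrences for $S$ and $R$ by induction using~\eqref{eqn:Pascal_triangle}. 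You propose instead a closed-form evaluation: your two stated identities --- the value of $\sum_x\chi(x^TAx)$ and the consequent formula $T_k(B)=c_k\sum_xS_k(B+xx^T)$ --- are both correct, and the overall strategy (subgroup sums over forms supported on subspaces, followed by inversion) is closer to the original Delsarte--Goethals treatment of alternating forms than to this paper's argument.

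As written, though, the proposal is a programme rather than a proof, and the two steps you defer are where the entire content of the theorem sits. First, the subspace-lattice evaluation of $S_k(B)$ does not land on~\eqref{eqn:ev_transform} by mere ``matching'': the system you actually obtain is $\sum_k\binom{m-k}{j-k}_q S_k(B)=q^{j(j+1)/2}\,\iota_j(B)$, with ordinary $q$-binomial coefficients indexed by the full rank $k\in\{0,\dots,m\}$ and with $\iota_j(B)$ the number of totally isotropic $j$-subspaces of $B$, whereas~\eqref{eqn:ev_transform} is a $q^2$-binomial system in $r=\floor{k/2}$ with a type-free right-hand side ${n-s\brack j}c^j$. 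Since the answer $2Q_{2r,\e}(2s,\t)$ genuinely depends on $\t$ and mixes $F^{(m-1)}$ with $F^{(m-2)}$, the cumulative sums cannot all collapse to ${n-s\brack j}c^j$; the pairing of consecutive ranks, the type-dependence of $\iota_j(B)$, and the regrouping into $F$-numbers with shifted superscripts all have to be carried out explicitly. Second, the distribution of rank and type of $B+xx^T$ as $x$ ranges over $V$ --- which you correctly flag as the main obstacle --- is the whole substance of the odd-$k$ cases, including the vanishing $Q_{2r+1,1}(2s,\t)=Q_{2r+1,-1}(2s,\t)$, and nothing in the proposal verifies it. So: a sound and genuinely different skeleton, but the decisive computations are still owed, and until they are performed the four displayed formulas have not been established.
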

\begin{proof}
See appendix.
\end{proof}
\par
We close this section by recalling some useful identities for $q^2$-analogs of binomial coefficients that will be repeatedly used in the remainder of this paper (see~\cite{DelGoe1975} and~\cite{And1976}, for example):
\begin{equation}
\sum_{j=0}^hq^{j(j-1)}{h\brack j}x^{h-j}\,y^j=\prod_{k=0}^{h-1}(x+q^{2k}y)\quad\text{for real $x,y$},   \label{eqn:q-binomial}
\end{equation}
\begin{equation}
{n\brack k}{k\brack i}={n\brack i}{n-i\brack n-k},   \label{eqn:cross_identity}
\end{equation}
\begin{equation}
{n\brack k}=q^{2k}{n-1\brack k}+{n-1\brack k-1}={n-1\brack k}+q^{2(n-k)}{n-1\brack k-1}.   \label{eqn:Pascal_triangle}
\end{equation}


\section{Combinatorial properties of subsets of $X$}
\label{sec:properties}

\subsection{Definitions and basic properties}

Let $Y$ be a subset of $X=X(m,q)$ and associate with $Y$ the rational numbers 
\[
a_{i,\t}=\frac{\abs{(Y\times Y)\cap R_{i,\t}}}{\abs{Y}}.
\]
Stated differently, $a_{i,\t}$ is the average number of pairs in $Y\times Y$ whose difference has rank $i$ and type~$\t$. The sequence of numbers $(a_{i,\t})$ is called the \emph{inner distribution} of~$Y$. Let $Q_{k,\e}(i,\t)$ be the $Q$-numbers of $(X,(R_{i,\t}))$. The \emph{dual inner distribution} of~$Y$ is the sequence of numbers $(a'_{k,\e})$, where
\begin{equation}
a'_{k,\e}=\sum_{i,\t}\,Q_{k,\e}(i,\t)\,a_{i,\t}.   \label{eqn:def_dual_distribution}
\end{equation}
\par
It is readily verified that the mapping $\rho:X\times X\to\Z$, given by
\[
\rho(A,B)=\rank(A-B),
\]
is a distance function on $X$. Accordingly, given an integer $d$ satisfying $1\le d\le m$, we say that $Y$ is a \emph{$d$-code} in $X$ if $\rank(A-B)\ge d$ for all distinct $A,B\in Y$. Alternatively, $Y$ is a $d$-code if
\[
a_{i,1}=a_{i,-1}=0 \quad\text{for each $i\in\{1,2,\dots,d-1\}$}.
\]
We say that $Y$ is a \emph{$t$-design} if
\[
a'_{k,1}=a'_{k,-1}=0\quad\text{for each $k\in\{1,2,\dots,t\}$}.
\]
We also say that $Y$ is a $(2t+1,\e)$-design if it is a $(2t+1)$-design and $a'_{2t+2,\e}=0$.\footnote{Since $Q_{2t+2,\e}(i,\t)$ is independent of the choice of the character $\chi$ by Theorem~\ref{thm:eigenvalues}, in view of~\eqref{eqn:def_dual_distribution}, a $(2t+1,\e)$-design is well defined.}
\par
A subset $Y$ of $X$ is \emph{additive} if $Y$ is a subgroup of $(X,+)$. Note that the inner distribution $(a_{i,\t})$ of an additive subset $Y$ of $X$ satisfies
\[
a_{i,\t}=\abs{Y\cap X_{i,\t}},
\]
where $X_{i,\t}$ is the set of matrices in $X$ of rank $i$ and type $\t$. We define the \emph{dual} of an additive subset $Y$ of $X$ to be
\begin{equation}
Y^\perp=\{B\in X:\langle A,B\rangle=1\;\text{for each $A\in Y$}\},   \label{eqn:def_dual}
\end{equation}
which again is an additive subset of $X$. We have
\[
\abs{Y}\,\abs{Y^\perp}=\abs{X}.
\]
The following result is a special case of a general property of association schemes (see~\cite[Theorem~27]{DelLev1998}, for example).
\begin{theorem}
\label{thm:dual}
Let $Y$ be an additive subset of $X(m,q)$ and let $Y^\perp$ be its dual, having inner distributions $(a_{i,\d})$ and $(a^\perp_{k,\e})$, respectively. Then $(a^\perp_{k,\e})$ is proportional to the dual inner distribution $(a'_{k,\e})$ of $Y$. In particular,
\[
\abs{Y}a^\perp_{k,\e}=a'_{k,\e}.
\]
\end{theorem}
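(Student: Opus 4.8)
The plan is to give the standard character-theoretic proof of this MacWilliams-type identity for translation schemes. First I would record the character-sum description of the $Q$-numbers that is implicit in \eqref{eqn:P_numbers} and \eqref{eqn:Q_from_P}. Relabelling indices in \eqref{eqn:P_numbers} gives $P_{k,\e}(i,\t)=\sum_{A\in X_{k,\e}}\overline{\langle A,B\rangle}$ for any $B\in X_{i,\t}$, and since $Q_{k,\e}(i,\t)=\overline{P_{k,\e}(i,\t)}$ by \eqref{eqn:Q_from_P}, taking complex conjugates yields
\[
Q_{k,\e}(i,\t)=\sum_{A\in X_{k,\e}}\langle A,B\rangle\quad\text{for every $B\in X_{i,\t}$.}
\]
The essential feature here is that this sum is independent of which representative $B$ of the class $X_{i,\t}$ is chosen, which is exactly the well-definedness asserted around \eqref{eqn:P_numbers}.

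Next I would substitute the additive-case formula $a_{i,\t}=\abs{Y\cap X_{i,\t}}$ into the definition \eqref{eqn:def_dual_distribution}. Since each $B\in Y$ lies in a unique class $X_{i,\t}$, and for that class the display above evaluates $Q_{k,\e}(i,\t)$ as a character sum over $X_{k,\e}$, I can regroup the weighted sum over classes as a single sum over the elements of $Y$:
\[
a'_{k,\e}=\sum_{i,\t}Q_{k,\e}(i,\t)\,\abs{Y\cap X_{i,\t}}
=\sum_{B\in Y}\sum_{A\in X_{k,\e}}\langle A,B\rangle
=\sum_{A\in X_{k,\e}}\;\sum_{B\in Y}\langle A,B\rangle,
\]
where the final equality merely interchanges the order of summation.

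The heart of the argument is then the inner sum $\sum_{B\in Y}\langle A,B\rangle$. By \eqref{eqn:ip_homomorphism} together with the symmetry $\langle A,B\rangle=\langle B,A\rangle$, which follows from $\tr(AB)=\tr(BA)$, the map $\langle A,\,\cdot\,\rangle$ is a character of the finite abelian group $(X,+)$, and its restriction to the subgroup $Y$ is either trivial or nontrivial. The orthogonality relations for characters of $Y$ then give that this inner sum equals $\abs{Y}$ precisely when $\langle A,\,\cdot\,\rangle$ is trivial on $Y$, that is, when $A\in Y^\perp$ by \eqref{eqn:def_dual}, and equals $0$ otherwise. Hence only the terms with $A\in X_{k,\e}\cap Y^\perp$ survive, and since $Y^\perp$ is itself additive we have $a^\perp_{k,\e}=\abs{Y^\perp\cap X_{k,\e}}$, so that
\[
a'_{k,\e}=\abs{Y}\,\bigabs{X_{k,\e}\cap Y^\perp}=\abs{Y}\,a^\perp_{k,\e},
\]
which is the claimed identity.

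I anticipate no serious obstacle in this argument. The only points demanding care are the correct placement of the complex conjugates when passing from the definition of $P_{k,\e}$ to the character-sum form of $Q_{k,\e}$, and the verification that $\langle A,\,\cdot\,\rangle$ is genuinely a group character so that the orthogonality relations over the subgroup $Y$ apply; once these are in place, the computation is entirely mechanical.
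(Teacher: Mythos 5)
Your argument is correct. Note, though, that the paper does not actually prove this statement: it simply cites it as a special case of a general duality property of (translation) association schemes, namely \cite[Theorem~27]{DelLev1998}. What you have written is essentially the direct, self-contained derivation of that general fact in the concrete setting at hand: you unpack the $Q$-numbers as character sums via \eqref{eqn:P_numbers} and \eqref{eqn:Q_from_P} (handling the conjugation correctly), use $a_{i,\t}=\abs{Y\cap X_{i,\t}}$ to turn the class-weighted sum in \eqref{eqn:def_dual_distribution} into a double sum over $A\in X_{k,\e}$ and $B\in Y$, and then apply orthogonality of characters on the subgroup $Y$, using both \eqref{eqn:ip_homomorphism} and the symmetry $\tr(AB)=\tr(BA)$ so that $\langle A,\,\cdot\,\rangle$ restricted to $Y$ is genuinely a character whose triviality is equivalent to $A\in Y^\perp$ under \eqref{eqn:def_dual}. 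This buys a proof readable without external references and makes transparent exactly which structural ingredients (the translation-scheme description of the $Q$-numbers and the nondegenerate symmetric pairing on $X$) are used; the paper's route instead places the result in the general framework of Delsarte--Levenshtein, which is shorter on the page and emphasises that nothing is special about symmetric bilinear forms here. The one point worth making explicit if you write this up is the well-definedness of the character sum $\sum_{A\in X_{k,\e}}\langle A,B\rangle$ independently of the representative $B\in X_{i,\t}$, which you correctly attribute to the remark surrounding \eqref{eqn:P_numbers} (i.e.\ \cite[Proposition~3.2]{WanWanMaMa2003}).
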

\par
In the remainder of this section we give an equivalent formulation of~\eqref{eqn:def_dual_distribution}, which will make our forthcoming analysis easier. To do so, write
\begin{equation}
\begin{split}
A_s&=a_{2s,1}+a_{2s,-1}+a_{2s-1,1}+a_{2s-1,-1},\\[1ex]
B_s&=a_{2s,1}+a_{2s,-1}+a_{2s+1,1}+a_{2s+1,-1},\\[1ex]
C_s&=\eta(-1)^sq^{-s}(a_{2s,1}-a_{2s,-1}),\\[1ex]
D_s&=\eta(-1)^sq^{-s}(a_{2s+1,1}-a_{2s+1,-1}),
\end{split}   \label{eqn:def_ABCD}
\end{equation}
and
\begin{equation}
\begin{split}
A'_r&=a'_{2r,1}+a'_{2r,-1}+a'_{2r-1,1}+a'_{2r-1,-1},\\[1ex]
B'_r&=a'_{2r,1}+a'_{2r,-1}+a'_{2r+1,1}+a'_{2r+1,-1},\\[1ex]
C'_r&=\eta(-1)^rq^{-r}(a'_{2r,1}-a'_{2r,-1}),\\[1ex]
D'_r&=\eta(-1)^rq^{-r}(a'_{2r+1,1}-a'_{2r+1,-1}).
\end{split}   \label{eqn:def_ABCD_dual}
\end{equation}
It is immediate that the knowledge of $(A_s)$, $(B_s)$, $(C_s)$, and $(D_s)$ is equivalent to the knowledge of $(a_{i,\t})$. The reason for introducing these numbers becomes clear from the following lemma.
\begin{lemma}
\label{lem:four_equations}
Consider a subset of $X(m,q)$ with inner distribution $(a_{i,\t})$ and dual inner distribution $(a'_{k,\e})$ and let $A_s,B_s,C_s,D_s$ and $A_r',B_r',C_r',D_r'$ be as defined in~\eqref{eqn:def_ABCD} and~\eqref{eqn:def_ABCD_dual}, respectively. Then
\begin{align*}
A_r'&=\sum_sF^{(m+1)}_r(s)\,A_s,\\
C_r'&=\sum_sF^{(m)}_r(s)\,B_s,\\
B_r'&=q^m\sum_sF^{(m)}_r(s)\,C_s,\\
D_r'&=q^{m-1}\gamma_q\sum_sF^{(m-1)}_r(s)\,D_s,
\end{align*}
where $\gamma_q$ is the Gauss sum defined in~\eqref{eqn:def_gauss_sum}.
\end{lemma}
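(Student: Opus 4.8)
The plan is to start from the definition of the dual inner distribution, substitute the explicit $Q$-numbers from Theorem~\ref{thm:eigenvalues}, and reorganise the resulting four-term sums so that the linear combinations $A_s,B_s,C_s,D_s$ of the $a_{i,\t}$ emerge naturally on the right-hand side and the combinations $A'_r,B'_r,C'_r,D'_r$ of the $a'_{k,\e}$ emerge on the left. By definition $a'_{k,\e}=\sum_{i,\t}Q_{k,\e}(i,\t)\,a_{i,\t}$, where the sum ranges over all relevant $(i,\t)$. The key observation is that the index $i$ in $X(m,q)$ runs over ranks $1$ through $m$ together with the boundary value $0$, and each such $i$ is either even ($i=2s$) or odd ($i=2s+1$), with $\t\in\{+1,-1\}$; likewise $k$ splits into $k=2r$ and $k=2r+1$. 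Thus each $a'_{k,\e}$ is a sum over the four families of $Q$-numbers displayed in Theorem~\ref{thm:eigenvalues}.

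First I would compute the four quantities $A'_r$, $C'_r$, $B'_r$, $D'_r$ one at a time. Take $A'_r=a'_{2r,1}+a'_{2r,-1}+a'_{2r-1,1}+a'_{2r-1,-1}$ as a representative case. Expanding each $a'_{k,\e}$ via~\eqref{eqn:def_dual_distribution} and then summing over $\e\in\{+1,-1\}$, every term carrying a factor $\e$ in Theorem~\ref{thm:eigenvalues} cancels, since such terms appear with opposite signs for $\e=+1$ and $\e=-1$. What survives is, in each of the two parity classes of $i$, a clean factor involving $F^{(m-1)}_r$, $F^{(m-2)}_r$, $F^{(m-2)}_{r-1}$, and $F^{(m)}_r$. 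I would then group the surviving contributions by the four combinations of $a_{i,\t}$ appearing in~\eqref{eqn:def_ABCD}; the terms with a factor $\t$ will regroup into the differences $a_{2s,1}-a_{2s,-1}$ and $a_{2s+1,1}-a_{2s+1,-1}$ (hence into $C_s$ and $D_s$), while the $\t$-free terms regroup into the sums that define $A_s$ and $B_s$. The anticipated outcome is that, after this regrouping and after invoking the defining recurrence~\eqref{eqn:ev_transform} or the Pascal-type identity~\eqref{eqn:Pascal_triangle} to consolidate the shifted $F$-values, the four right-hand sides collapse to the single clean expressions $\sum_s F^{(m+1)}_r(s)\,A_s$, $\sum_s F^{(m)}_r(s)\,B_s$, $q^m\sum_s F^{(m)}_r(s)\,C_s$, and $q^{m-1}\gamma_q\sum_s F^{(m-1)}_r(s)\,D_s$, respectively. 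The Gauss-sum factor $\gamma_q$ in the $D'_r$ line traces back directly to the unique family of $Q$-numbers in Theorem~\ref{thm:eigenvalues} that carries $\gamma_q$, namely $Q_{2r+1,\e}(2s+1,\t)$.

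The main obstacle I expect is the bookkeeping needed to pass from the superscript $m-1$, $m-2$ appearing in the raw $Q$-numbers to the superscripts $m+1$, $m$, $m-1$ appearing in the statement. This is where the identities~\eqref{eqn:q-binomial}--\eqref{eqn:Pascal_triangle} for the $q^2$-analog binomial coefficients will do the heavy lifting: the recurrence~\eqref{eqn:Pascal_triangle} relates $F^{(m)}_r$ to $F^{(m-1)}_r$ and $F^{(m-1)}_{r-1}$ (through the corresponding identity on the defining sums~\eqref{eqn:ev_transform}), and iterating lets me raise or lower the superscript by one at the cost of controlled error terms that must be verified to cancel against the other surviving pieces. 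The care required is purely in matching the powers of $q$ — in particular the exponents $m-s+r-1$, $m-s+2r$, and $m-s+2r-2$ in Theorem~\ref{thm:eigenvalues} against the normalisations $q^{-s}$, $q^{-r}$ hidden in the definitions~\eqref{eqn:def_ABCD} and~\eqref{eqn:def_ABCD_dual} of $C_s,D_s,C'_r,D'_r$ — and in checking that the sign factors $\eta(-1)^{s+r}$, $\eta(-1)^s$, $\eta(-1)^r$ combine with the $\eta(-1)^s q^{-s}$ and $\eta(-1)^r q^{-r}$ prefactors to produce exactly the factor $\eta(-1)^{r-s}$ or its cancellation demanded by each line. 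Once these exponent and sign ledgers balance, the four identities follow, and no deeper structural input beyond Theorem~\ref{thm:eigenvalues} and the binomial identities is needed.
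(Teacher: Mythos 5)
Your plan is exactly the paper's proof: the paper likewise expands $a'_{k,\e}$ via~\eqref{eqn:def_dual_distribution}, sums the $Q$-numbers of Theorem~\ref{thm:eigenvalues} over $\e$ (with the sign $\e$ inserted for $C'_r$ and $D'_r$) so that the unwanted terms cancel, groups the survivors by the parity of $i$ and by $\t$ to produce $A_s,B_s,C_s,D_s$, and uses~\eqref{eqn:Pascal_triangle} to collapse $q^{2r}F^{(m-1)}_r(s-1)-q^{2r-2}F^{(m-1)}_{r-1}(s-1)$ into $F^{(m+1)}_r(s)$. The remaining work you describe is just the routine verification of the four displayed $Q$-number identities, which the paper also leaves as a direct check.
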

\begin{proof}
From Theorem~\ref{thm:eigenvalues} we find that, for each $i\in\{2s,2s-1\}$, we have
\begin{align*}
\sum_{\e\in\{-1,1\}}\big[Q_{2r,\e}(i,\t)+Q_{2r-1,\e}(i,\t)\big]&=q^{2r}F^{(m-1)}_r(s-1)-q^{2r-2}F^{(m-1)}_{r-1}(s-1)\\
&=F^{(m+1)}_r(s),
\end{align*}
using~\eqref{eqn:Pascal_triangle}. In view of~\eqref{eqn:def_dual_distribution}, this establishes the expression for $A'_r$. We proceed similarly for the remaining three equations. The expression for $B'_r$ follows from
\begin{gather*}
\sum_{\e\in\{-1,1\}}\big[Q_{2r,\e}(2s,\t)+Q_{2r+1,\e}(2s,\t)\big]=\t\,\eta(-1)^sq^{m-s}F^{(m)}_r(s),
\intertext{and}
\sum_{\e\in\{-1,1\}}\big[Q_{2r,\e}(2s+1,\t)+Q_{2r+1,\e}(2s+1,\t)\big]=0.
\end{gather*}
The expression for $C'_r$ follows from
\[
\sum_{\e\in\{-1,1\}}\e\,Q_{2r,\e}(i,\t)=\eta(-1)^rq^r\,F^{(m)}_r(s)
\]
for each $i\in\{2s,2s+1\}$, and the expression for $D'_r$ follows from
\begin{gather*}
\sum_{\e\in\{-1,1\}}\e\, Q_{2r+1,\e}(2s+1,\t)=\t\,\eta(-1)^{s+r}\,q^{m-s+r-1}\,\gamma_q\,F^{(m-1)}_r(s),
\intertext{and}
\sum_{\e\in\{-1,1\}}\e\, Q_{2r+1,\e}(2s,\t)=0.   \qedhere
\end{gather*}
\end{proof}


\subsection{Bounds on codes}

In this section we prove bounds on the size of $d$-codes in $X(m,q)$. Our main result is the following bound for additive codes.
\begin{theorem}
\label{thm:bound_sg}
Let $Y$ be an additive $d$-code in $X(m,q)$. Then we have
\[
\abs{Y}\le
\begin{cases}
q^{m(m-d+2)/2}     & \text{for $m-d$ even}\\[1ex]
q^{(m+1)(m-d+1)/2} & \text{for $m-d$ odd}.
\end{cases}
\]
\end{theorem}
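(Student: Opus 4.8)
The plan is to combine the duality relation of Theorem~\ref{thm:dual} with the first of the four transforms in Lemma~\ref{lem:four_equations}, thereby reducing the bound to a single linear programming argument carried entirely by the aggregates $A_s$. Since $Y^\perp$ is an additive subset of $X(m,q)$, its inner distribution is nonnegative, and so by Theorem~\ref{thm:dual} every dual inner distribution value $a'_{k,\e}$ of $Y$ is nonnegative; in particular each $A'_r=a'_{2r,1}+a'_{2r,-1}+a'_{2r-1,1}+a'_{2r-1,-1}$ is nonnegative, being a sum of such terms. On the primal side, the $d$-code condition $a_{i,\t}=0$ for $1\le i\le d-1$ forces $A_s=0$ for $1\le s\le\floor{(d-1)/2}$, while $A_0=1$. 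Moreover $\abs{Y}=\sum_{i,\t}a_{i,\t}=\sum_sA_s=A'_0$, the last equality because $F^{(m+1)}_0\equiv 1$.

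First I would feed these facts into the identity $A'_r=\sum_sF^{(m+1)}_r(s)\,A_s$. Up to the standard normalisation, the matrix $\bigl(F^{(m+1)}_r(s)\bigr)$ is the matrix of $P$-numbers of the association scheme of alternating bilinear forms on an $(m+1)$-dimensional space over $\F_q$, whose eigenvalues are the generalised Krawtchouk polynomials of~\cite{DelGoe1975}. Consequently $(A_s)$ satisfies exactly the constraints of Delsarte's linear program for that scheme: the nonnegativity $A_s\ge 0$, the normalisation $A_0=1$, the minimum-distance constraints $A_s=0$ for $1\le s\le\floor{(d-1)/2}$, and the dual nonnegativity $A'_r\ge 0$ for all $r$. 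Maximising $\sum_sA_s$ subject to these constraints therefore bounds $\abs{Y}$ from above, and the value of this program is the Singleton-type bound for the alternating forms scheme, which I expect to evaluate to the two expressions in the statement. To make this fully explicit I would exhibit the dual certificate directly: a nonnegative combination $\sum_r\lambda_rA'_r$ together with an annihilator assembled from the $F^{(m+1)}_r$ via the $q^2$-binomial identities~\eqref{eqn:q-binomial}--\eqref{eqn:Pascal_triangle} and the defining relations~\eqref{eqn:ev_transform}, chosen so that the coefficient of each admissible $A_s$ is at least $1$ while the coefficient of $A_0$ reproduces the desired exponent.

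The main obstacle is precisely this explicit evaluation. Inverting the Krawtchouk transform and summing the resulting $q^2$-binomial series has to be pushed through to land on the exact exponents, and it is the boundary contribution at the top rank that produces the dichotomy between $m-d$ even and $m-d$ odd. As a consistency check that isolates where the real difficulty lies, note that the case $m-d$ even reduces to the case $m-d$ odd one dimension lower: intersecting $Y$ with the forms that vanish on a fixed basis vector yields an additive $d$-code in $X(m-1,q)$, for which $(m-1)-d$ is odd, and this subgroup has index at most $q^m$ in $Y$ (the last row and column contribute $m$ entries by symmetry), so $\abs{Y}\le q^m\cdot q^{m(m-d)/2}=q^{m(m-d+2)/2}$ once the odd-dimensional bound is known. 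For $m-d$ odd, however, the same projection loses a factor of $q^{d-1}$, since the index bound $q^m$ must be sharpened to $q^{m-d+1}$; recovering this factor is exactly what the spectral argument above is needed for, and it is the genuinely hard step.
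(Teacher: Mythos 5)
Your plan splits into an odd-$d$ half and an even-$d$ half, and only the first survives. For odd $d=2\d-1$ your LP on the aggregates $A_s$ is essentially the paper's own proof of Lemma~\ref{lem:bound_odd_d}: the ``dual certificate'' you say you still need to exhibit is already supplied by \eqref{eqn:ev_transform}, which gives $\sum_{r}{n-r\brack \d-1}A'_r=c^{n-\d+1}\sum_s{n-s\brack n-\d+1}A_s$ directly, so no Krawtchouk inversion or summation of $q^2$-binomial series is required; the bound drops out in two lines and (as in the paper) holds for non-additive codes as well. The gap is the even-$d$ case. The $d$-code condition forces $A_s=0$ exactly for $1\le s\le\floor{(d-1)/2}$, and this range is the same for $d=2\d$ as for $d=2\d-1$: the aggregate $A_\d$ contains $a_{2\d,\t}$, which a $(2\d)$-code does not kill. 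So your LP on the $A_s$ literally cannot distinguish a $(2\d)$-code from a $(2\d-1)$-code and can only ever return the $(2\d-1)$-bound, which exceeds the claimed bound by a positive power of $q$. There is also a structural obstruction: your argument invokes additivity only to get nonnegativity of $(a'_{k,\e})$, but that holds for arbitrary subsets (Lemma~\ref{lem:dual_nonnegative}), so if your argument worked it would prove the even-$d$ bound for all $(2\d)$-codes --- a question the paper explicitly leaves open, and one that Proposition~\ref{pro:bound_even} together with the reported LP-optimality checks indicates cannot be settled by linear programming alone. Your fallback reduction does not rescue this: shortening an even-$d$ code in $X(m,q)$ with $m-d$ even produces an even-$d$ code in $X(m-1,q)$ with $(m-1)-d$ odd, which is again exactly the case you cannot handle, so the reduction is circular for even~$d$.

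What is missing is the arithmetic step that the paper uses in Lemma~\ref{lem:bound_even_d_sg}. One must switch to the pair $(B_s,C'_r)$ --- the $B_s$ do see the full $(2\d)$-code condition --- but $C'_r$ is a signed combination of dual distribution entries, so nonnegativity gives nothing. Instead, additivity enters through Theorem~\ref{thm:dual}: the left-hand side $q^{n-\d+1}\sum_r{n-r\brack\d-1}C'_r$ is divisible by $\abs{Y}$, hence so is $(cq)^{n-\d+1}{n\brack\d-1}$; since $\abs{Y}$ is a power of $p$ by Lagrange and ${n\brack\d-1}$ is prime to $p$, one gets $\abs{Y}\mid(cq)^{n-\d+1}$ and hence the bound. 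This divisibility argument is the genuinely additive ingredient, and no amount of work on the $A_s$-transform will substitute for it.
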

\par
We shall see in Section~\ref{sec:constructions} that the bound in Theorem~\ref{thm:bound_sg} is best possible. Theorem~\ref{thm:bound_sg} will follow from the slightly stronger Lemma~\ref{lem:bound_odd_d} and Lemma~\ref{lem:bound_even_d_sg}, to be stated and proved below.
\par
We shall need the following result, which is a consequence of a well known property of association schemes (see~\cite[Theorem~3]{DelLev1998}, for example).
\begin{lemma}
\label{lem:dual_nonnegative}
Let $Y$ be a subset of $X(m,q)$ with dual inner distribution $(a'_{k,\e})$. Then the numbers $a'_{k,\e}$ are real and nonnegative.
\end{lemma}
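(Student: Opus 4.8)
The plan is to deduce Lemma~\ref{lem:dual_nonnegative} from the general theory of commutative association schemes, since the excerpt explicitly points to \cite[Theorem~3]{DelLev1998} as the source. The key structural fact is that the minimal idempotents $E_0,E_1,\dots,E_n$ of the Bose--Mesner algebra are positive semidefinite Hermitian matrices: each $E_k$ is an orthogonal projection onto a common eigenspace of the commuting normal adjacency matrices $A_i$, so $E_k=E_k^*=E_k^2$ and hence $u^*E_ku=\|E_ku\|^2\ge 0$ for every complex vector $u$. This is the only heavy ingredient, and it is a standard property of the idempotent basis.

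First I would recall the definition of the dual inner distribution. Writing $(a_{i,\t})$ for the inner distribution of $Y$ and using $Q_{k,\e}(i,\t)=\overline{P_{k,\e}(i,\t)}$ from~\eqref{eqn:Q_from_P} together with the definition~\eqref{eqn:def_dual_distribution}, the aim is to exhibit each $a'_{k,\e}$ as a nonnegative real quantity. Let $\xi\in\mathbb{C}^{X}$ be the characteristic vector of $Y$, i.e.\ the $0/1$ column vector indexed by $X$ with $\xi_A=1$ exactly when $A\in Y$. The natural identity to establish is
\begin{equation*}
\xi^*E_{k,\e}\,\xi=\frac{\abs{Y}}{\abs{X}}\,a'_{k,\e},
\end{equation*}
which I would prove by expanding $E_{k,\e}$ in the adjacency basis via $E_{k,\e}=\tfrac{1}{\abs{X}}\sum_{i,\t}Q_{k,\e}(i,\t)A_{i,\t}$, noting that $\xi^*A_{i,\t}\,\xi$ counts the ordered pairs $(A,B)\in Y\times Y$ with $(A,B)\in R_{i,\t}$, which equals $\abs{Y}\,a_{i,\t}$ by the definition of the inner distribution. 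Summing over $i,\t$ and comparing with~\eqref{eqn:def_dual_distribution} then yields the displayed identity.

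Once this identity is in place, nonnegativity and reality are immediate: since $E_{k,\e}$ is positive semidefinite and Hermitian, the left-hand side $\xi^*E_{k,\e}\,\xi=\|E_{k,\e}\,\xi\|^2$ is a nonnegative real number, and because $\abs{Y}/\abs{X}>0$ we conclude that $a'_{k,\e}$ is real and nonnegative. The one subtlety worth flagging is that the scheme $(X,(R_{i,\t}))$ is commutative but not symmetric when $q\equiv 3\pmod 4$, so the $A_{i,\t}$ need not be symmetric and the $Q$-numbers are genuinely complex; this is exactly why the statement asserts reality of $a'_{k,\e}$ and not merely its nonnegativity.

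The main obstacle I anticipate is purely expository rather than mathematical: verifying that the idempotents of this particular non-symmetric scheme are still Hermitian positive semidefinite, so that the quadratic-form argument applies verbatim. For a commutative association scheme this holds because the $A_{i,\t}$ are simultaneously diagonalizable normal matrices and the $E_{k,\e}$ are the associated spectral projections, which are automatically Hermitian idempotents; the formal self-duality noted in Section~\ref{sec:scheme} is consistent with this but not needed. Given that the excerpt cites \cite[Theorem~3]{DelLev1998} directly, the cleanest route is to invoke that theorem for the positive-semidefiniteness of the $E_{k,\e}$ and then supply only the short computation $\xi^*E_{k,\e}\,\xi=(\abs{Y}/\abs{X})\,a'_{k,\e}$, which is the genuinely scheme-specific step.
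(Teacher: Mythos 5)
Your proposal is correct and is essentially the paper's approach: the paper gives no proof of its own, simply citing \cite[Theorem~3]{DelLev1998}, and your argument --- the identity $\xi^*E_{k,\e}\xi=\tfrac{\abs{Y}}{\abs{X}}a'_{k,\e}$ combined with the fact that the minimal idempotents of a commutative scheme are Hermitian orthogonal projections --- is precisely the standard proof of that cited result. Your remark that Hermiticity of the $E_{k,\e}$ still holds in the non-symmetric (merely commutative) case is the right point to flag, and it is correctly resolved.
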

\par
We now consider $d$-codes in $X(m,q)$ for odd $d$ that are not necessarily additive.
\begin{lemma}
\label{lem:bound_odd_d}
Let $Y$ be a $(2\d-1)$-code in $X(m,q)$. Then
\[
\abs{Y}\le \begin{cases}
q^{m((m+1)/2-\d+1)} & \text{for odd $m$}\\[1ex]
q^{(m+1)(m/2-\d+1)} & \text{for even $m$}.
\end{cases}
\]
Moreover, equality occurs if and only if $Y\!$ is a $(2t+2)$-design, where $t=\floor{(m+1)/2}-\d$.
\end{lemma}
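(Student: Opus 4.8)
The plan is to prove this as a linear-programming-style bound, exploiting the nonnegativity of the dual inner distribution from Lemma~\ref{lem:dual_nonnegative} together with the transform in Lemma~\ref{lem:four_equations}. The key observation is that a $(2\d-1)$-code has $a_{i,\t}=0$ for $1\le i\le 2\d-2$, which forces many of the aggregated quantities $A_s,B_s,C_s,D_s$ from~\eqref{eqn:def_ABCD} to vanish for small $s$. Specifically, $A_s=B_s=C_s=D_s=0$ whenever the indices $2s,2s-1$ (respectively $2s,2s+1$) all lie in the forbidden range, leaving only $A_0=1$ (from the identity element) and contributions from $s\ge\d-1$ or so. I would first record precisely which of these sums survive given the $d$-code constraint.

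The heart of the argument is to construct a suitable nonnegative combination of the $a'_{k,\e}$ that telescopes to give the bound. Since $\abs{Y}=\sum_{i,\t}a_{i,\t}=A_0+\sum_{s\ge1}A_s$ is (up to normalization) controlled by $A'_0=\sum_s F^{(m+1)}_r(s)A_s$ at $r=0$, and more generally the whole inner distribution is captured by the four transformed sequences, the strategy is to pick the largest index $r=t+1$ (with $t=\floor{(m+1)/2}-\d$) for which the transform still ``sees'' only the allowed $A_s$, and use $A'_{t+1}\ge0$ together with $C'_{t+1}\ge0$, etc. The technical engine will be the orthogonality/inversion relations~\eqref{eqn:ev_transform} for the $F^{(m)}_r(s)$, which let me invert the transform and express $\abs{Y}$ as a nonnegative-coefficient combination of the $a'_{k,\e}$ plus a main term coming from $a_{0,1}=1$. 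I expect the cleanest route is to evaluate $F^{(m+1)}_{t+1}(s)$ at the surviving values of $s$ and show the resulting inequality reads $\abs{Y}\le(\text{explicit power of }q)$, splitting into the odd-$m$ and even-$m$ cases according to the parity controlling $n=\floor{m/2}$ and the constant $c=q^{m(m-1)/(2n)}$ in the definition of $F^{(m)}_r$.

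The main obstacle, I anticipate, is the equality analysis. The inequality direction follows once the right nonnegative functional is identified, but characterizing \emph{when} equality holds requires showing that equality forces all the dual quantities $a'_{k,\e}$ used in the bound to vanish, and conversely. Concretely, equality should force $a'_{k,\e}=0$ for all $k$ in the relevant range, which by the definitions~\eqref{eqn:def_ABCD_dual} of $A'_r,B'_r,C'_r,D'_r$ and the nonsingularity of the transform matrices $\big(F^{(m)}_r(s)\big)$ translates exactly into the design condition $a'_{k,1}=a'_{k,-1}=0$ for $1\le k\le 2t+1$ together with the single extra constraint $a'_{2t+2,\e}=0$ that defines a $(2t+2)$-design. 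I would carry this out by verifying that the chosen functional is a strictly positive combination of precisely those $a'_{k,\e}$, so that its vanishing is equivalent to each summand vanishing; the delicate point is to confirm that the coefficient of $a'_{2t+2,\e}$ is genuinely nonzero and that no allowed $a_{i,\t}$ slips into the functional with a sign that would break the argument. Tracking the parity bookkeeping between $t=\floor{(m+1)/2}-\d$ and the two cases of the bound is where the calculation will demand the most care.
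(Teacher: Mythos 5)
Your proposal is correct and follows essentially the same route as the paper: apply the relation \eqref{eqn:ev_transform} at the top index $j=n-\delta+1$ to the identity $A'_r=\sum_s F^{(m+1)}_r(s)A_s$ from Lemma~\ref{lem:four_equations}, note that the code condition kills every term on the right except $s=0$, and invoke Lemma~\ref{lem:dual_nonnegative} to get the bound, with equality forcing the positive-coefficient combination $\sum_{r=1}^{n-\delta+1}{n-r\brack\delta-1}A'_r$ to vanish, which is exactly the $(2t+2)$-design condition. The only superfluous element is your mention of $C'_{t+1}$, which is not needed here (it enters only in the even-$d$ bounds); the paper works entirely with the $A_s$/$A'_r$ sequence and the single parameter pair $n=\floor{(m+1)/2}$, $c=q^{m(m+1)/(2n)}$.
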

\begin{proof}
Let $(a_{i,\t})$ be the inner distribution of $Y$ and let $A_s$ and $A'_r$ be as defined in~\eqref{eqn:def_ABCD} and~\eqref{eqn:def_ABCD_dual}, respectively. Write 
\[
n=\floor{(m+1)/2}\quad\text{and}\quad c=q^{m(m+1)/(2n)}.
\]
From Lemma~\ref{lem:four_equations} and~\eqref{eqn:ev_transform} we find that
\[
\sum_{r=0}^{n-\d+1}{n-r\brack \d-1} A'_r=c^{n-\d+1}\sum_{s=0}^n{n-s\brack n-\d+1}A_s.
\]
Since $Y$ is a $(2\d-1)$-code, we have $A_s=0$ for $0<s<\d$. Since $A_0=1$ and $A'_0=\abs{Y}$, we obtain
\begin{equation}
\sum_{r=1}^{n-\d+1}{n-r\brack \d-1} A'_r={n\brack \d-1}(c^{n-\d+1}-\abs{Y}).   \label{eqn:moments_A}
\end{equation}
Since the left-hand side must be real and nonnegative by Lemma~\ref{lem:dual_nonnegative}, we deduce 
\[
\abs{Y}\le c^{n-\d+1},
\]
as required. Moreover, in case of equality, the left-hand side of~\eqref{eqn:moments_A} equals zero, which occurs if and only if $Y$ is a $(2n-2\d+2)$-design.
\end{proof}
\par
We shall see in Theorem~\ref{thm:inner_dist_odd} that, in case of equality in Lemma~\ref{lem:bound_odd_d}, the inner distribution of~$Y$ is uniquely determined. It should be noted that the statement of Lemma~\ref{lem:bound_odd_d} was already pointed out in~\cite[Section~5]{Sch2010} and is also known to be true in characteristic two~\cite[Corollary~7]{Sch2010}.
\par
For even $d$, we have the following bound for the size of $d$-codes in $X(m,q)$, which however applies only to additive sets.
\begin{lemma}
\label{lem:bound_even_d_sg}
Let $Y$ be an additive $(2\d)$-code in $X(m,q)$. Then we have
\[
\abs{Y}\le \begin{cases}
q^{m(m/2-\d+1)}         & \text{for even $m$}\\[1ex]
q^{(m+1)((m-1)/2-\d+1)} & \text{for odd $m$}.
\end{cases}
\]
\end{lemma}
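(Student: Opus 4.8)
The plan is to run the same moment machinery used for Lemma~\ref{lem:bound_odd_d}, but to extract the stronger constant by feeding in the full strength of the even distance condition together with the subgroup hypothesis. Writing $(a_{i,\t})$ for the inner distribution (so $a_{i,\t}=\abs{Y\cap X_{i,\t}}$ here, since $Y$ is additive) and using the abbreviations~\eqref{eqn:def_ABCD}, the condition that $Y$ is a $(2\d)$-code translates into $A_s=B_s=C_s=0$ for $1\le s\le\d-1$ \emph{together with} the extra relation $D_s=0$ for $0\le s\le\d-1$ (the case $s=\d-1$, i.e.\ $a_{2\d-1,\t}=0$, being the genuinely new ingredient over a $(2\d-1)$-code), while $A_0=B_0=C_0=1$ comes from $a_{0,1}=1$. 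First I would record, via Lemma~\ref{lem:four_equations} and the inversion~\eqref{eqn:ev_transform}, the moment identities obtained by pairing each of the four relations with ${n-r\brack\d-1}$ and summing over $r$. For the $A'$-relation this reproduces exactly the computation of Lemma~\ref{lem:bound_odd_d} and yields only the weaker $(2\d-1)$-code bound, so the $A'$-relation alone is insufficient and the improvement must come from the \emph{signed} data carried by $C'$ and $D'$.

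The key observation is that, by Lemma~\ref{lem:dual_nonnegative},
\[
A'_r+\eta(-1)^rq^rC'_r=2a'_{2r,1}+a'_{2r-1,1}+a'_{2r-1,-1}\ge 0,
\]
with an analogous identity pairing $B'_r$ with $D'_r$. Summing this against ${n-r\brack\d-1}$ isolates the term $2{n\brack\d-1}\abs{Y}$ at $r=0$ (using $a'_{0,1}=\abs{Y}$), while all higher terms are manifestly nonnegative. Substituting the moment identity for the $C'$-relation and using $B_0=1$, $B_s=0$ for $1\le s\le\d-1$, $B_s\ge 0$ for $s\ge\d$, reduces everything to the single quantity
\[
G(s)=\sum_r\eta(-1)^rq^r{n-r\brack\d-1}F^{(m)}_r(s),
\]
namely one gets $2{n\brack\d-1}\abs{Y}\le c^{\,n-\d+1}{n\brack\d-1}+G(0)+\sum_{s\ge\d}B_s\,G(s)$. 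Thus the bound follows provided one controls the sign of $G(s)$ for $s\ge\d$ and evaluates $G(0)$; simplifying the resulting constant with the $q^2$-binomial identities~\eqref{eqn:q-binomial}--\eqref{eqn:Pascal_triangle} and splitting according to the parity of $m$ should then produce the stated expression.

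The point at which the subgroup hypothesis becomes indispensable — and the reason the statement is restricted to additive $Y$ — is that $G(s)\le 0$ need \emph{not} hold for all $s\ge\d$, so for a general $d$-code the terms $B_sG(s)$ cannot be discarded. For additive $Y$, Theorem~\ref{thm:dual} gives $a'_{k,\e}=\abs{Y}\,\abs{Y^\perp\cap X_{k,\e}}$, so each dual value lies in $\{0\}\cup[\,\abs{Y},\infty)$ rather than merely satisfying $a'_{k,\e}\ge 0$. This gap is exactly what is unavailable in the general setting (where Lemma~\ref{lem:dual_nonnegative} gives only nonnegativity), and it is what lets one absorb the offending $B_sG(s)$ terms; the extra condition $D_s=0$ at $s=\d-1$ feeds a further moment identity (from the $D'$-relation and~\eqref{eqn:ev_transform}, using the Gauss sum $\gamma_q$) that, combined with this gap, pins down the slack the odd argument throws away.

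The main obstacle I anticipate is twofold. Analytically, $G(s)$ carries the weight $\eta(-1)^rq^r$, which falls outside the plain inversion~\eqref{eqn:ev_transform}; evaluating $G(s)$ in closed form is the central computation, and it is where $\eta(-1)$ and $\gamma_q$ enter, forcing a genuine case distinction according to $q\bmod 4$ — the same phenomenon responsible for the scheme failing to be symmetric when $q\equiv 3\pmod 4$. Combinatorially, the difficulty is choosing the multipliers so that the final inequality is simultaneously nonnegative term-by-term and tight, i.e.\ exhibiting the correct certificate; this is markedly more delicate than the odd case, since for even $m$ the $A'$- and $C'$-moments share the same index $n=\floor{m/2}=\floor{(m+1)/2}$ but for odd $m$ they do not, so the parities of $m$ must be treated separately and the essential (non-reducible) case is odd $m$. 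I would expect the bookkeeping of the $q$-power constants, rather than any conceptual step, to be the most error-prone part.
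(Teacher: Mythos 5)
Your proposal does not close, and the missing step is exactly the one the paper's argument turns on. You set up the correct moment identity for the $C'$-relation, but then you complicate it by forming the weighted combination $A'_r+\eta(-1)^rq^rC'_r$ in order to have a manifestly nonnegative quantity, and this weight is what creates the sign-indefinite sums $\sum_{s\ge\d}B_sG(s)$ that you then concede you cannot control. The proposed ``absorption'' of these terms via the observation that each $a'_{k,\e}$ lies in $\{0\}\cup[\abs{Y},\infty)$ is never made concrete, and the terms $B_s$ with $s\ge\d$ are genuinely unconstrained by the hypotheses; moreover, a certificate built from nonnegativity alone should not be expected to reach the stated bound, since that route is what yields the weaker Proposition~\ref{pro:bound_even} (which the paper indicates is already LP-optimal for even $m$). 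Note that in the unweighted identity, with $n=\floor{m/2}$ and $c=q^{m(m-1)/(2n)}$,
\[
q^{n-\d+1}\sum_{r=0}^{n-\d+1}{n-r\brack \d-1}C'_r=(cq)^{n-\d+1}\sum_{s=0}^{n}{n-s\brack n-\d+1}B_s,
\]
the terms $B_s$ with $s\ge\d$ never appear at all, because ${n-s\brack n-\d+1}=0$ there; your $G(s)$ problem is an artifact of the extra weight, not of the lemma.

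The idea you are missing is that additivity is used for \emph{divisibility}, not to sharpen an inequality. Since $B_0=1$ and $B_s=0$ for $1\le s\le\d-1$, the identity above reads exactly $q^{n-\d+1}\sum_r{n-r\brack\d-1}C'_r=(cq)^{n-\d+1}{n\brack\d-1}$. By Theorem~\ref{thm:dual}, $a'_{k,\e}=\abs{Y}\,a^\perp_{k,\e}$ with $a^\perp_{k,\e}$ nonnegative integers, so $q^{n-\d+1}C'_r=\eta(-1)^r\,q^{n-\d+1-r}\,\abs{Y}\,(a^\perp_{2r,1}-a^\perp_{2r,-1})$ is an integer multiple of $\abs{Y}$ for every $r\le n-\d+1$; hence $\abs{Y}$ divides $(cq)^{n-\d+1}{n\brack\d-1}$. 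By Lagrange's theorem $\abs{Y}$ is a power of $p$, while ${n\brack\d-1}$ is coprime to $p$, so $\abs{Y}$ divides, and in particular is at most, $(cq)^{n-\d+1}$; unwinding $c$ for the two parities of $m$ gives the stated exponents. No Gauss sums, no case distinction on $q\bmod 4$, and no use of the condition $D_{\d-1}=0$ are needed --- the difficulties you anticipate are artifacts of the route you chose rather than features of the lemma.
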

\begin{proof}
Let $(a_{i,\t})$ be the inner distribution of $Y$ and let $C_s$ and $B'_r$ be as defined in~\eqref{eqn:def_ABCD} and~\eqref{eqn:def_ABCD_dual}, respectively. Write 
\[
n=\floor{m/2}\quad\text{and}\quad c=q^{m(m-1)/(2n)}.
\]
From Lemma~\ref{lem:four_equations} and~\eqref{eqn:ev_transform} we find that
\[
q^{n-\d+1}\sum_{r=0}^{n-\d+1}{n-r\brack \d-1} C'_r=(cq)^{n-\d+1}\sum_{s=0}^n{n-s\brack n-\d+1}B_s.
\]
Since $Y$ is a $(2\d)$-code, only the first summand on the right-hand side is nonzero and, using $B_0=1$, we then have
\[
q^{n-\d+1}\sum_{r=0}^{n-\d+1}{n-r\brack \d-1} C'_r=(cq)^{n-\d+1}{n\brack \d-1}.
\]
Since $Y$ is additive, we find from Theorem~\ref{thm:dual} and the definition of the numbers $C'_r$ that the left-hand side is divisible by $\abs{Y}$. Hence the right-hand side is divisible by $\abs{Y}$. Let $p$ be the prime dividing $q$. Note that $\abs{Y}$ must be a power of $p$ by Lagrange's theorem and that ${n\brack \d-1}$ is not divisible by $p$. Therefore $\abs{Y}$ divides $(cq)^{n-\d+1}$, which gives
\[
\abs{Y}\le(cq)^{n-\d+1},
\]
as required
\end{proof}
\par
We note that, if $m$ is even and $Y$ is an $m$-code in $X(m,q)$ of size $q^m$, then $Y$ must be a $(1,\eta(-1))$-design. This can be proved by considering the number $A'_1+qC'_1$, which is associated with $Y$ in the usual way. We shall see in Proposition~\ref{pro:inner_dist_even} that in this case the inner distribution of $Y$ is uniquely defined (to be precise, the two types of forms of rank $m$ occur equally often). However, this is not true in general: In case of equality in Lemma~\ref{lem:bound_even_d_sg}, the inner distribution of $Y$ is not always uniquely defined. This is essentially because there is no analogue of the second statement in Lemma~\ref{lem:bound_odd_d}. For example, consider $2$-codes that are subspaces of $X(4,3)$. According to Lemma~\ref{lem:bound_even_d_sg}, the largest dimension of such a subspace is $8$ and in case of equality it can be verified with a computer that exactly four different inner distributions occur. These are given in Table~\ref{tab:id_X43}.
\begin{table}[ht]
\centering
\caption{The possible inner distributions of $2$-codes that are $8$-dimensional subspaces of $X(4,3)$.}
\label{tab:id_X43}
\begin{tabular}{ccccccccc}
\hline
$a_{1,0}$ & $a_{1,1}$ & $a_{1,-1}$ & $a_{2,1}$ & $a_{2,-1}$ & $a_{3,1}$ & $a_{3,-1}$ & $a_{4,1}$ & $a_{4,-1}$ \\\hline\hline 
1 & 0 & 0 & 100 & 160 & 1080 & 1080 & 2340 & 1800\\
1 & 0 & 0 & 100 & 214 &  972 &  972 & 2340 & 1962\\
1 & 0 & 0 & 118 & 196 &  972 &  972 & 2394 & 1908\\
1 & 0 & 0 & 136 & 232 &  864 &  864 & 2448 & 2016\\\hline
\end{tabular}
\end{table}
\par
We close this section by addressing $(2\d)$-codes that are not necessarily additive. Of course we can use Lemma~\ref{lem:bound_odd_d} to obtain bounds for such codes. We now show that these bounds can be improved by a factor of roughly $q+1$. 
\begin{proposition}
\label{pro:bound_even}
Let $Y$ be a $(2\d)$-code in $X(m,q)$. Then we have
\[
\abs{Y}\le \begin{cases}
q^{m((m+1)/2-\d+1)}\;\dfrac{1+q^{-m+1}}{q+1} & \text{for odd $m$}\\[3ex]
q^{(m+1)(m/2-\d+1)}\;\dfrac{1+q^{-m+2\d-1}}{q+1} & \text{for even $m$}.
\end{cases}
\]
\end{proposition}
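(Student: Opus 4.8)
The plan is to treat $Y$ as both a $(2\delta-1)$-code and a genuine $(2\delta)$-code simultaneously, running the moment argument of Lemma~\ref{lem:bound_odd_d} and Lemma~\ref{lem:bound_even_d_sg} for \emph{two} transforms at once and then coupling them through the nonnegativity of the dual inner distribution (Lemma~\ref{lem:dual_nonnegative}). Since $Y$ is a $(2\delta)$-code we have $a_{i,\t}=0$ for $1\le i\le 2\delta-1$, so in the notation of~\eqref{eqn:def_ABCD} the primal quantities satisfy $A_s=B_s=C_s=0$ for $1\le s\le\delta-1$ while $A_0=B_0=C_0=1$. Put $n=\floor{(m+1)/2}$, $N=n-\delta+1$, and let $c$ be as in Lemma~\ref{lem:bound_odd_d}, so that the baseline bound there is $\abs{Y}\le c^N$. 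The key observation is that the \emph{type-sensitive} transform producing the $C'_r$ is, term by term, dominated by one of the two manifestly nonnegative transforms producing the $A'_r$ or the $B'_r$, and this domination costs exactly a factor $q$.

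First I would record, exactly as in the proofs of the two cited lemmas (from Lemma~\ref{lem:four_equations} and~\eqref{eqn:ev_transform}, evaluated at the deepest level at which only the $s=0$ term survives on the primal side), three identities with a common weight pattern. Writing $n_C=\floor{m/2}$, $N_C=n_C-\delta+1$, and $P=q^{m(m-1)N_C/(2n_C)}$ for the natural constant of the $F^{(m)}$-transform, the $C'$-identity reads
\[
{n_C\brack\delta-1}\abs{Y}+\sum_{r\ge1}{n_C-r\brack\delta-1}C'_r=P\,{n_C\brack\delta-1},
\]
since $C'_0=a'_{0,1}-a'_{0,-1}=\abs{Y}$. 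The partner is the nonnegative transform whose weights ${n_C-r\brack\delta-1}$ match these: for even $m$ this is the $A'$-transform at level $N$ (here $n_C=n$), and for odd $m$ it is the $B'$-transform at level $N-1$ (here $n_C=n-1$). In either case the partner identity has right-hand constant equal to the baseline $c^N$, so that its $r\ge1$ tail equals ${n_C\brack\delta-1}(c^N-\abs{Y})$ after isolating the $r=0$ term (for $B'$ one also discards the nonnegative $a'_{1,1}+a'_{1,-1}$ hidden in $B'_0$).

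Next I would couple the two. By Lemma~\ref{lem:dual_nonnegative} all of $a'_{2r,1},a'_{2r,-1},a'_{2r\pm1,\t}$ are nonnegative, whence for $r\ge1$
\[
\bigabs{C'_r}=q^{-r}\bigabs{a'_{2r,1}-a'_{2r,-1}}\le q^{-r}\bigl(a'_{2r,1}+a'_{2r,-1}\bigr)\le q^{-1}A'_r
\]
for even $m$ (using $a'_{2r,1}+a'_{2r,-1}\le A'_r$), and the same with $B'_r$ for odd $m$. Substituting into the $C'$-identity, replacing each weighted $C'_r$ by $q^{-1}$ times the matching nonnegative term, and then inserting the partner identity to evaluate $\sum_{r\ge1}{n_C-r\brack\delta-1}A'_r={n_C\brack\delta-1}(c^N-\abs{Y})$ collapses everything, after dividing by ${n_C\brack\delta-1}$, to $\abs{Y}\le P+q^{-1}(c^N-\abs{Y})$, that is
\[
(q+1)\,\abs{Y}\le c^N+qP.
\]
Solving for $\abs{Y}$ and simplifying the powers of $q$ (with $P=q^{m(N-1)}$ for odd $m$ and $P=q^{(m-1)N}$ for even $m$) reproduces precisely the two displayed bounds.

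The main obstacle is not conceptual but lies in this bookkeeping: one must pick, separately for the two parities, the exact depth of each moment identity (the deepest level leaving only the $s=0$ primal term) and the correct partner transform, so that the $q^2$-binomial weights agree \emph{term by term} and the partner's constant is exactly the baseline $c^N$ that cancels against $\abs{Y}$. Pairing $C'$ with $B'$ in the even case, for instance, still matches weights and gives a valid inequality, but produces a strictly weaker bound once $\delta\ge2$, so the choice of $A'$ there is essential. A minor point to settle first is that each $C'_r$ is a well-defined real number independent of the choice of $\chi$; this holds because $C'_r$ depends only on the even-indexed $Q$-numbers, which are $\chi$-independent by Theorem~\ref{thm:eigenvalues}.
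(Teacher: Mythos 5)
Your proposal is correct and is essentially the paper's own argument in a lightly reorganised form: the paper forms the single moment identity for $A'_r+qC'_r$ (even $m$) resp.\ $B'_r+qC'_r$ (odd $m$) at exactly the depths and with exactly the partner transforms you chose, and its appeal to Lemma~\ref{lem:dual_nonnegative} for the nonnegativity of that combination is precisely your estimate $\abs{C'_r}\le q^{-r}(a'_{2r,1}+a'_{2r,-1})\le q^{-1}A'_r$ (resp.\ $\le q^{-1}B'_r$), including the discarding of $a'_{1,1}+a'_{1,-1}$ from $B'_0$ in the odd case. Both routes end at $(q+1)\abs{Y}\le c^N+qP$ and hence at the stated bounds.
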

\begin{proof}
Let $(a_{i,\t})$ be the inner distribution of $Y$ and let $A_s$, $B_s$, $C_s$ be as defined~\eqref{eqn:def_ABCD}. Let $(a'_{k,\e})$ be the dual inner distribution of $Y$ and let  $A'_r$, $B'_r$, $C'_r$ be as defined~\eqref{eqn:def_ABCD_dual}.
\par
First let $m$ be even, say $m=2n$. From Lemma~\ref{lem:four_equations} and~\eqref{eqn:ev_transform} we find that
\[
\sum_{r=0}^{n-\d+1}{n-r\brack \d-1} \big(A'_r+q C'_r\big)=q^{(m-1)(n-\d+1)}\sum_{s=0}^n{n-s\brack n-\d+1}\big(q^{2(n-\d+1)}A_s+qB_s\big).
\]
Since $Y$ is a $(2\d)$-code, only the first summand on the right-hand side is nonzero. Since $A_0=B_0=1$ and $A'_0=C'_0=\abs{Y}$, we then have
\[
\sum_{r=1}^{n-\d+1}{n-r\brack \d-1} \big(A'_r+q\, C'_r\big)={n\brack \d-1} \big(q^{(m-1)(n-\d+1)}(q^{2(n-\d+1)}+q\big)-(q+1)\abs{Y}\big).
\]
The left-hand side must be real and nonnegative by Lemma~\ref{lem:dual_nonnegative}, which gives the bound for $\abs{Y}$.
\par
Now let $m$ be odd, say $m=2n+1$. In this case, we find similarly as before that
\[
\sum_{r=0}^{n-\d+1}{n-r\brack \d-1} \big(B'_r+q C'_r\big)=q^{m(n-\d+1)}\sum_{s=0}^n{n-s\brack n-\d+1}\big(q^mC_s+qB_s\big).
\]
Again, only the first summand on the right-hand side is nonzero. Since $B_0=C_0=1$ and $B'_0=\abs{Y}+a'_{1,1}+a'_{1,-1}$ and $C'_0=\abs{Y}$ we then have
\begin{multline*}
{n\brack \d-1}(a'_{1,1}+a'_{1,-1})+\sum_{r=1}^{n-\d+1}{n-r\brack \d-1} \big(B'_r+qC'_r\big)\\
={n\brack \d-1}\big(q^{m(n-\d+1)}(q^m+q)-(q+1)\abs{Y}\big).
\end{multline*}
Since the left-hand side must be real and nonnegative, we obtain the bound for $\abs{Y}$. 
\end{proof}
\par
Consider a subset $Y$ with inner distribution $(a_{i,\t})$ and dual inner distribution $(a'_{k,\e})$. There is evidence that, for even $m$, Proposition~\ref{pro:bound_even} gives the optimal solution to the linear program whose objective is to maximise
\[
\abs{Y}=\sum_{i,\t}a_{i,\t},
\]
subject to the nonnegativity of the numbers $a_{i,\t}$ and $a'_{k,\e}$ (which is forced by Lemma \ref{lem:dual_nonnegative}). We have checked this using the simplex algorithm for several small values of $m$ and~$q$.
\par
We emphasise that $(2\d)$-codes in $X(m,q)$ are still quite mysterious since it is not clear whether Lemma~\ref{lem:bound_even_d_sg} also holds for general $(2\d)$-codes or whether there exist nonadditive $(2\d)$-codes that are better than additive codes.


\subsection{Determination of inner distributions}

In this section we determine the inner distributions of $d$-codes in $X(m,q)$, provided that their dual inner distributions contain enough zero entries. The method uses Lemma~\ref{lem:four_equations} and the following lemma, which is implicit in the proof of~\cite[Theorem~4]{DelGoe1975}.
\begin{lemma}
\label{lem:eq_sys_solution}
Let $m$ and $\d$ be positive integers and write
\[
n=\lfloor m/2\rfloor\quad\text{and}\quad c=q^{m(m-1)/(2n)}.
\]
Let $w_0,\dots,w_n$ be real numbers with $w_1=\dots=w_{\d-1}=0$ and write
\[
w'_r=\sum_{s=0}^nF^{(m)}_r(s)\,w_s.
\]
Then
\[
w_i=\sum_{j=0}^{i-\d}(-1)^jq^{j(j-1)}\bigg({n\brack i}{i\brack j}\bigg(\frac{w'_0}{c^{n+j-i}}-w_0\bigg)+{n+j-i\brack n-i}\sum_{r=1}^{n+j-i}{n-r\brack i-j}\frac{w'_r}{c^{n+j-i}}\bigg)
\]
for $i>0$. In particular, if $w'_1=\dots=w'_{n-\d}=0$, then
\[
w_i={n\brack i}\sum_{j=0}^{i-\d}(-1)^jq^{j(j-1)}{i\brack j}\bigg(\frac{w'_0}{c^{n+j-i}}-w_0\bigg)
\]
for $i>0$.
\end{lemma}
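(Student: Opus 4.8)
The plan is to strip away the generalised Krawtchouk polynomials and reduce the whole statement to the inversion of a pure $q^2$-binomial moment transform. First I would eliminate the numbers $F^{(m)}_r(s)$ altogether. Multiplying $w'_r=\sum_s F^{(m)}_r(s)\,w_s$ by ${n-r\brack n-j}$ and summing $r$ from $0$ to $j$, the defining relations~\eqref{eqn:ev_transform} collapse the inner weights and yield the moment identities
\[
\sum_{r=0}^j {n-r\brack n-j}\,w'_r=c^jM_j,\qquad M_j:=\sum_{s=0}^n{n-s\brack j}\,w_s\qquad(0\le j\le n).
\]
This is the crucial reduction: the $F$-transform is replaced by the upper-triangular transform $(w_s)\mapsto(M_j)$, which involves only $q^2$-binomials and can be inverted explicitly.

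Next I would invert this transform. Setting $x=1$, $y=-1$ in~\eqref{eqn:q-binomial} gives the orthogonality relation $\sum_{j=0}^h(-1)^jq^{j(j-1)}{h\brack j}=0$ for $h\ge1$ (and $=1$ for $h=0$), and combining it with the cross identity~\eqref{eqn:cross_identity} produces, for $i>0$,
\[
w_i=\sum_{k=0}^i(-1)^kq^{k(k-1)}{n-i+k\brack n-i}\,M_{n-i+k}.
\]
Substituting $M_{n-i+k}=c^{-(n-i+k)}\sum_{r=0}^{n-i+k}{n-r\brack i-k}w'_r$, peeling off the $r=0$ term, and simplifying its coefficient through ${n-i+k\brack n-i}{n\brack i-k}={n\brack i}{i\brack k}$ (again~\eqref{eqn:cross_identity}, using ${n\brack i-k}={n\brack n-i+k}$), I arrive at the asserted expression but with $k$ ranging over all of $0,\dots,i$ and without the $-w_0$ terms.

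The final step, which I expect to be the main obstacle, is to bring in the hypothesis $w_1=\dots=w_{\d-1}=0$ to truncate the range and manufacture the $-w_0$ terms. I would observe that this hypothesis is equivalent to $M_j={n\brack j}w_0$ for $j>n-\d$, since ${n-s\brack j}=0$ once $s>n-j$. Feeding this into the tail $k>i-\d$ of the inversion sum, using ${n-i+k\brack n-i}{n\brack n-i+k}={n\brack i}{i\brack k}$, and applying the orthogonality relation $\sum_{k=0}^i(-1)^kq^{k(k-1)}{i\brack k}=0$ to rewrite $\sum_{k>i-\d}$ as $-\sum_{k\le i-\d}$, the tail becomes exactly $-{n\brack i}w_0\sum_{k=0}^{i-\d}(-1)^kq^{k(k-1)}{i\brack k}$. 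Absorbing this into the head turns each bare $w'_0/c^{n+k-i}$ into $w'_0/c^{n+k-i}-w_0$ and truncates the sum at $i-\d$, which after renaming $k$ to $j$ is precisely the stated identity.

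The genuinely delicate part is the bookkeeping in this last step—keeping the two applications of the cross identity and the two orthogonality cancellations aligned—whereas the reduction and the inversion are essentially forced once~\eqref{eqn:ev_transform} is invoked. Finally, the ``in particular'' claim is immediate: if $w'_1=\dots=w'_{n-\d}=0$, then every inner sum $\sum_{r=1}^{n+j-i}$ with $j\le i-\d$ runs only over indices $r\le n-\d$ and hence vanishes, leaving the single-sum formula.
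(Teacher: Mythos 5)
Your proposal is correct and follows essentially the same route as the paper: both reduce via~\eqref{eqn:ev_transform} to the moment identities $\sum_{r=0}^j{n-r\brack n-j}w'_r=c^j\sum_s{n-s\brack j}w_s$ and then invert the $q^2$-binomial transform using the inversion formula derived from~\eqref{eqn:q-binomial} and~\eqref{eqn:cross_identity}. The only (cosmetic) difference is that you invert the full transform first and then use $w_1=\cdots=w_{\d-1}=0$ to fold the tail terms into the $-w_0$ contributions, whereas the paper applies the hypothesis first to obtain a smaller triangular system in $w_{n-i}$, $0\le i\le n-\d$, and inverts that.
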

\begin{proof}
Multiply both sides of~\eqref{eqn:ev_transform} by $w_s$ and then sum over $s$ to obtain
\[
\sum_{r=0}^j{n-r\brack n-j}w'_r=c^j \sum_{s=0}^n {n-s\brack j} w_s
\]
for each $j\in\{0,1,\dots,n\}$. Since $w_1=\dots=w_{\d-1}=0$, we obtain
\[
\frac{1}{c^j}\sum_{r=0}^j{n-r\brack n-j}w'_r-{n\brack j}w_0=\sum_{i=0}^{n-\d} {i\brack j}w_{n-i}
\]
for each $j\in\{0,1,\dots,n\}$. Then, by the inversion formula
\[
\sum_{j=i}^k(-1)^{j-i}q^{(j-i)(j-i-1)}{j\brack i}{k\brack j}=
\begin{cases}
1 & \text{for $k=i$}\\
0 & \text{otherwise}
\end{cases}
\]
(which can be derived from~\eqref{eqn:q-binomial} and~\eqref{eqn:cross_identity}), we have
\[
w_{n-i}=\sum_{j=i}^{n-\d}(-1)^{j-i}q^{(j-i)(j-i-1)}{j\brack i}\bigg(\frac{1}{c^j}\sum_{r=0}^j{n-r\brack n-j}w'_r-{n\brack j}w_0\bigg),
\]
from which we can obtain the desired expression for $w_i$ using elementary manipulations including~\eqref{eqn:cross_identity}.
\end{proof}
\par
Our next result gives the inner distribution of $d$-codes for odd $d$ and in particular applies to $(2\d-1)$-codes meeting the bounds in Lemma~\ref{lem:bound_odd_d}.
\begin{theorem}
\label{thm:inner_dist_odd}
If $Y$ is a $(2\d-1)$-code and a $(2n-2\d+3)$-design in $X(2n+1,q)$, then its inner distribution $(a_{i,\t})$ satisfies
\begin{gather*}
a_{2i-1,\t}=\frac{1}{2}{n\brack i-1}\sum_{j=0}^{i-\d}(-1)^{j}q^{j(j-1)}{i\brack j}\bigg(\frac{\abs{Y}}{q^{(2n+1)(n+1+j-i)}}-1\bigg),\\[1ex]
a_{2i,\t}=\frac{1}{2}\big(q^{2i}+\t\eta(-1)^iq^i\big)\,{n\brack i}\sum_{j=0}^{i-\d}(-1)^jq^{j(j-1)}{i\brack j}\bigg(\frac{\abs{Y}}{q^{(2n+1)(n+1+j-i)}}-1\bigg)
\end{gather*}
for $i>0$. If $Y$ is a $(2\d-1)$-code and a $(2n-2\d+2)$-design in $X(2n,q)$, then its inner distribution $(a_{i,\t})$ satisfies
\begin{align*}
a_{2i-1,\t}&=\frac{1}{2}(q^{2i}-1){n\brack i}\sum_{j=0}^{i-\d}(-1)^{j}q^{j(j-1)}{i-1\brack j}\frac{\abs{Y}\,q^{2j}}{q^{(2n+1)(n+1+j-i)}},\\[1ex]
a_{2i,\t}&=\frac{1}{2}{n\brack  i}\sum_{j=0}^{i-\d+1}(-1)^{j}q^{j(j-1)}{i\brack j}\bigg(\frac{\abs{Y}\,q^{2j}}{q^{(2n+1)(n+j-i)}}-1\bigg)\\
&+\frac{\t}{2}\,\eta(-1)^iq^i{n\brack i}\sum_{j=0}^{i-\d}(-1)^jq^{j(j-1)}{i\brack j}\bigg(\frac{\abs{Y}}{q^{(2n-1)(n+j-i)}\,q^{2n}}-1\bigg)
\end{align*}
for $i>0$.
\end{theorem}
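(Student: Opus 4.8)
The plan is to determine the four aggregate sequences $(A_s),(B_s),(C_s),(D_s)$ of~\eqref{eqn:def_ABCD} one at a time, and then invert the triangular change of variables~\eqref{eqn:def_ABCD} to recover the individual numbers $a_{i,\tau}$. The starting point is Lemma~\ref{lem:four_equations}, which writes each dual aggregate $A'_r,C'_r,B'_r,D'_r$ as a single $F$-transform of $A_s,B_s,C_s,D_s$ at \emph{dimensions} $m+1,m,m,m-1$ respectively. Thus the problem decouples into four independent instances of the transform~\eqref{eqn:ev_transform}, each inverted by Lemma~\ref{lem:eq_sys_solution} applied at the appropriate dimension. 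First I would translate the two hypotheses into vanishing data. The $(2\delta-1)$-code condition forces the low-index primal aggregates to vanish: since $A_s$ and $C_s$ reach only ranks $2s-1,2s$ and $2s$, one gets $A_1=\dots=A_{\delta-1}=0$ and $C_1=\dots=C_{\delta-1}=0$ with $A_0=C_0=1$, whereas $B_s$ and $D_s$ also reach the odd rank $2s+1$, so the vanishing is shifted by one: $B_1=\dots=B_{\delta-2}=0$ and $D_0=\dots=D_{\delta-2}=0$ with $B_0=1$. Dually, the design hypothesis forces the low-index dual aggregates to vanish, and a direct count shows that the stated design strength ($(2n-2\delta+3)$ for $m=2n+1$ and $(2n-2\delta+2)$ for $m=2n$) is exactly what makes $A'_r,B'_r,C'_r,D'_r$ vanish up to the index demanded by the ``in particular'' clause of Lemma~\ref{lem:eq_sys_solution} in each of the four channels, while pinning the $r=0$ values to $A'_0=C'_0=\abs{Y}$, $B'_0=\abs{Y}$, and $D'_0=0$.

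Next I would run the inversion in each channel. Feeding the vanishing data into Lemma~\ref{lem:eq_sys_solution} (with parameter $\delta$ for the $A$- and $C$-channels and parameter $\delta-1$ for the $B$- and $D$-channels, matching the shift above) immediately triggers the simpler ``in particular'' formula, because enough dual aggregates vanish. This yields closed forms for $A_s,B_s,C_s$ in terms of $\abs{Y}$; the bookkeeping of the constants $c=q^{m(m-1)/(2n)}$ at the shifted dimensions is what produces the exponents $(2n+1)(n+1+j-i)$ and their relatives in the statement. In the $D$-channel both $w_0=D_0$ and $w'_0=D'_0/(q^{m-1}\gamma_q)$ vanish, so the formula collapses to $D_s\equiv 0$; this is the source of the fact that the two types occur equally often in odd rank, i.e.\ $a_{2i-1,1}=a_{2i-1,-1}$.

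Finally I would recover the $a_{i,\tau}$ by inverting~\eqref{eqn:def_ABCD}. The type-\emph{differences} come for free from $C_s$ and $D_s$: one has $a_{2s,1}-a_{2s,-1}=\eta(-1)^sq^sC_s$, while $a_{2s+1,1}-a_{2s+1,-1}=\eta(-1)^sq^sD_s=0$. The type-\emph{sums} $u_s=a_{2s,1}+a_{2s,-1}$ and $v_s=a_{2s-1,1}+a_{2s-1,-1}$ satisfy $A_s=u_s+v_s$ and $B_s=u_s+v_{s+1}$, which together with $u_0=1$ determine them uniquely. For $m=2n+1$ the closed form is transparent: the computed $A_s$ and $C_s$ share a common inner sum $\Sigma_s$ over $j$, so $A_s={n+1\brack s}\Sigma_s$ and $C_s={n\brack s}\Sigma_s$, and the Pascal identity ${n+1\brack s}=q^{2s}{n\brack s}+{n\brack s-1}$ from~\eqref{eqn:Pascal_triangle} exhibits the unique solution as $u_s=q^{2s}C_s$ and $v_s={n\brack s-1}\Sigma_s$; substituting back reproduces both displayed formulas at once. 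For $m=2n$ the same programme applies, but the analogous separation now uses the second form ${n\brack k}={n-1\brack k}+q^{2(n-k)}{n-1\brack k-1}$ of~\eqref{eqn:Pascal_triangle} together with the cross identity~\eqref{eqn:cross_identity} and~\eqref{eqn:q-binomial}, which is what introduces the extra factors $q^{2j}$ and the binomials ${i-1\brack j}$ into the even-dimensional formulas.

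The main obstacle I anticipate is not the inversion itself—Lemma~\ref{lem:eq_sys_solution} does the heavy lifting—but the recombination step, especially for even $m$, where the solved system must be massaged into the stated closed form by repeated use of the $q^2$-binomial identities. A secondary but essential point is the exact calibration of the design strength against the four different channel dimensions: one must verify, channel by channel, that $(2n-2\delta+3)$ (respectively $(2n-2\delta+2)$) makes precisely enough dual aggregates vanish for the ``in particular'' clause to apply, with no margin wasted.
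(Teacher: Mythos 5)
Your plan coincides with the paper's proof: decouple into the four channels via Lemma~\ref{lem:four_equations}, invert each with Lemma~\ref{lem:eq_sys_solution} using exactly the vanishing pattern you describe, conclude $D_s\equiv 0$, and recombine through the system $A_s=u_s+v_s$, $B_s=u_s+v_{s+1}$, $u_0=1$ together with the Pascal identity~\eqref{eqn:Pascal_triangle}; your identification $u_s=q^{2s}C_s$, $v_s={n\brack s-1}\Sigma_s$ for $m=2n+1$ is precisely what the paper verifies. There is, however, one concrete ingredient missing in the even-dimensional case, where you assert that ``the same programme applies.'' For $m=2n$ the $A$-channel returns $A_i$ with exponents $q^{(2n+1)(n+j-i)}$ while the $C'$- and $B'$-channels return $B_i$ and $C_i$ with exponents $q^{(2n-1)(n+j-i)}$, and the stated closed forms do \emph{not} satisfy $u_i+v_i=A_i$ as an identity in $\abs{Y}$: already for $n=\delta=i=2$ the coefficient of $\abs{Y}$ in $u_2+v_2$ is $1-(q^2+1)q^{-5}$ whereas in $A_2$ it is $1$. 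The recombination only closes after substituting $\abs{Y}=q^{(2n+1)(n-\delta+1)}$, which the hypotheses force via the equality clause of Lemma~\ref{lem:bound_odd_d}; the paper invokes exactly this fact, and no amount of $q^2$-binomial manipulation replaces it. A second, smaller omission is the case $\delta=1$: there the code condition gives no information on $B_0$ and $D_0$ (your lists of vanishing $B$'s and $D$'s are empty), and one needs the identity $\sum_{j=0}^{i}(-1)^jq^{j(j-1)}{i\brack j}=0$ from~\eqref{eqn:sum_binomials} to absorb the unknown values $B_0$ and $D_0$ into the stated formulas.
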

\begin{proof}
Given a subset $Y$ of $X(m,q)$ with inner distribution $(a_{i,\t})$, let $A_s,B_s,C_s,D_s$ be defined as in~\eqref{eqn:def_ABCD}, let $(a'_{k,\e})$ be the dual inner distribution of $Y$, and let $A'_r,B'_r,C'_r,D'_r$ be defined as in~\eqref{eqn:def_ABCD_dual}.
\par
First suppose that $Y$ is a $(2\d-1)$-code and a $(2n-2\d+3)$-design in $X(2n+1,q)$. This implies
\begin{align*}
A_1=\cdots=A_{\d-1}&=0, &  A'_1=\cdots=A'_{n-\d+1}&=0,\\
B_1=\cdots=B_{\d-2}&=0, &  C'_1=\cdots=C'_{n-\d+1}&=0,\\
C_1=\cdots=C_{\d-1}&=0, &  B'_1=\cdots=B'_{n-\d+1}&=0,\\
D_1=\cdots=D_{\d-2}&=0, &  D'_1=\cdots=D'_{n-\d+1}&=0.
\end{align*}
Moreover we have $A_0=1$ and $A'_0=C'_0=\abs{Y}$. Therefore we find from Lemmas~\ref{lem:four_equations} and~\ref{lem:eq_sys_solution} that
\[
A_i={n+1\,\brack i}\sum_{j=0}^{i-\d}(-1)^jq^{j(j-1)}{i\brack j}\bigg(\frac{\abs{Y}}{q^{(2n+1)(n+1+j-i)}}-1\bigg)
\]
and
\[
B_i={n\brack i}\sum_{j=0}^{i-\d+1}(-1)^jq^{j(j-1)}{i\brack j}\bigg(\frac{\abs{Y}}{q^{(2n+1)(n+j-i)}}-B_0\bigg)
\]
for $i>0$. If $\d>1$, then $B_0=1$, and since
\begin{equation}
\sum_{j=0}^i(-1)^jq^{j(j-1)}{i\brack j}=0   \label{eqn:sum_binomials}
\end{equation}
(which is a consequence of~\eqref{eqn:q-binomial}), we conclude that
\[
B_i={n\brack i}\sum_{j=0}^{i-\d+1}(-1)^jq^{j(j-1)}{i\brack j}\bigg(\frac{\abs{Y}}{q^{(2n+1)(n+j-i)}}-1\bigg)
\]
for $i>0$ holds for all $\d\ge 1$. It is readily verified that there is a unique solution for $a_{i,1}+a_{i,-1}$ to the equation system consisting of the first and the second equation of~\eqref{eqn:def_ABCD}. Using elementary manipulations, in particular~\eqref{eqn:Pascal_triangle}, it is readily verified that the expressions for $a_{i,1}+a_{i,-1}$ given in the theorem satisfy this system of equations.
\par
Since $C_0=1$ and $B'_0=\abs{Y}$, we find from Lemmas~\ref{lem:four_equations} and~\ref{lem:eq_sys_solution} that
\[
C_i={n\brack i}\sum_{j=0}^{i-\d}(-1)^jq^{j(j-1)}{i\brack j}\bigg(\frac{\abs{Y}}{q^{(2n+1)(n+1+j-i)}}-1\bigg)
\]
for $i>0$. We then obtain the expression for $a_{2i,\t}$ from the definition of $C_i$.
\par
Since $D'_0=0$, we find from Lemmas~\ref{lem:four_equations} and~\ref{lem:eq_sys_solution} that
\[
D_i={n\brack i}\sum_{j=0}^{i-\d+1}(-1)^jq^{j(j-1)}{i\brack j}D_0.
\]
Now either $\d>1$, in which case $D_0=0$, or $\d=1$, in which case we invoke~\eqref{eqn:sum_binomials}. In both cases we obtain $D_i=0$ for all $i$. Therefore, $a_{2i+1,1}=a_{2i+1,-1}$ for all $i$, which completes the proof of the first case.
\par
Now suppose that $Y$ is a $(2\d-1)$-code and a $(2n-2\d+2)$-design in $X(2n,q)$. This case is broadly similar to the first case, though here we have to use that $\abs{Y}=q^{(2n+1)(n-\d+1)}$, which follows from Lemma~\ref{lem:bound_odd_d}. We omit the details and restrict ourselves to provide the following expressions for the numbers $A_i$, $B_i$, $C_i$, and $D_i$:
\begin{align*}
A_i&={n\brack i}\sum_{j=0}^{i-\d}(-1)^jq^{j(j-1)}{i\brack j}\bigg(\frac{\abs{Y}}{q^{(2n+1)(n+j-i)}}-1\bigg),\\
B_i&={n\brack i}\sum_{j=0}^{i-\d+1}(-1)^jq^{j(j-1)}{i\brack j}\bigg(\frac{\abs{Y}}{q^{(2n-1)(n+j-i)}}-1\bigg),\\
C_i&={n\brack i}\sum_{j=0}^{i-\d}(-1)^jq^{j(j-1)}{i\brack j}\bigg(\frac{\abs{Y}}{q^{(2n-1)(n+j-i)}\,q^{2n}}-1\bigg)
\end{align*}
for $i>0$ and $D_i=0$ for all $i$.
\end{proof}
\par
In Section~\ref{sec:constructions} we provide unified constructions of additive $d$-codes in $X(m,q)$ that satisfy the bound of Theorem~\ref{thm:bound_sg} with equality. In the case that $d$ is odd, their inner distributions are uniquely determined by their parameters and are given by Theorem~\ref{thm:inner_dist_odd}. In the case that $d$ is even, it will turn out that their inner distributions are given by the following result, which is of a similar spirit as Theorem~\ref{thm:inner_dist_odd}. We note that, in the case that $n=2$ and $\d=1$, this result corresponds to the first inner distribution in Table~\ref{tab:id_X43}.
\begin{proposition}
\label{pro:inner_dist_even}
If $Y$ is a $(2\d)$-code and a $(2n-2\d+1)$-design in $X(2n,q)$, then its inner distribution $(a_{i,\t})$ satisfies
\begin{align*}
a_{2i-1,\t}&=\frac{1}{2}(q^{2i}-1){n\brack i}\sum_{j=0}^{i-\d-1}(-1)^jq^{j(j-1)}{i-1\brack j}\frac{\abs{Y}\,q^{2j}}{q^{(2n+1)(n+1+j-i)}},\\[1ex]
a_{2i,\t}&=\frac{1}{2}{n\brack  i}\sum_{j=0}^{i-\d}(-1)^{j}q^{j(j-1)}{i\brack j}\bigg(\frac{\abs{Y}\,q^{2j}}{q^{(2n+1)(n+j-i)}}-1\bigg)\\
&+\frac{\t}{2}\,\eta(-1)^iq^i{n\brack i}\sum_{j=0}^{i-\d}(-1)^jq^{j(j-1)}{i\brack j}\bigg(\frac{\abs{Y}}{q^{(2n-1)(n+j-i)}q^{2n}}-1\bigg)
\end{align*}
for $i>0$. If $Y$ is a $(2\d)$-code and a $(2n-2\d+1,\eta(-1)^{n-\d+1})$-design in $X(2n+1,q)$, then its inner distribution $(a_{i,\t})$ satisfies
\begin{align*}
a_{2i-1,\t}&=\frac{1}{2}{n\brack i-1}\sum_{j=0}^{i-\d}(-1)^jq^{j(j-1)}{i\brack j}\bigg(\frac{\abs{Y}}{q^{(2n+1)(n+1+j-i)}}-1\bigg)\\
&\qquad+\frac{1}{2}(-1)^{i-d}q^{(i-\d)(i-\d-1)}
{n\brack \d-1}\bigg(\frac{\abs{Y}}{q^{(2n+1)(n-\d+1)}}-1\bigg)\\[1ex]
&\qquad\qquad\times\bigg({n-\d\brack n-i+1}(q^{n-\d+1}+1)-{n-\d+1\brack n-i+1}\bigg),
\\[1ex]
a_{2i,\t}&=\frac{1}{2}\big(q^{2i}+\t\eta(-1)^iq^i\big){n\brack i}\sum_{j=0}^{i-\d}(-1)^jq^{j(j-1)}{i\brack j}\bigg(\frac{\abs{Y}}{q^{(2n+1)(n+1+j-i)}}-1\bigg)\\
&+\frac{1}{2}(-1)^{i-\d}q^{(i-\d+1)(i-d)}{n\brack \d-1}{n-\d\brack n-i}(q^{n-\d+1}+1)  \bigg(\frac{\abs{Y}}{q^{(2n+1)(n-\d+1)}}-1\bigg)
\end{align*}
for $i>0$.
\end{proposition}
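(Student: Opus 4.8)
The plan is to follow the proof of Theorem~\ref{thm:inner_dist_odd}: work throughout with the transformed quantities $A_s,B_s,C_s,D_s$ and their duals $A'_r,B'_r,C'_r,D'_r$ of \eqref{eqn:def_ABCD} and \eqref{eqn:def_ABCD_dual}, solve for the former using Lemma~\ref{lem:four_equations} and Lemma~\ref{lem:eq_sys_solution}, and only at the end recover $(a_{i,\t})$ from \eqref{eqn:def_ABCD}. First I would record the hypotheses. Since $Y$ is a $(2\d)$-code, the code condition forces $A_s=B_s=C_s=D_s=0$ for $1\le s\le\d-1$, together with $A_0=B_0=C_0=1$ and $D_0=0$. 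Reading the design conditions through \eqref{eqn:def_ABCD_dual}, a $(2n-2\d+1)$-design makes $A'_r,B'_r,C'_r,D'_r$ vanish for $1\le r\le n-\d$ (and $D'_0=0$). In the second case the extra condition $a'_{2n-2\d+2,\e}=0$ with $\e=\eta(-1)^{n-\d+1}$ gives, via $A'_{n-\d+1}=a'_{2n-2\d+2,-\e}$ and $C'_{n-\d+1}=-q^{-(n-\d+1)}a'_{2n-2\d+2,-\e}$, the single relation $A'_{n-\d+1}=-q^{n-\d+1}C'_{n-\d+1}$, which will later supply the missing datum.

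For $X(2n,q)$ the argument runs closely parallel to the even-dimensional case of Theorem~\ref{thm:inner_dist_odd}. Each of the four transforms in Lemma~\ref{lem:four_equations} carries exactly enough vanishing dual entries to invoke the ``in particular'' formula of Lemma~\ref{lem:eq_sys_solution}: the transform determining $A_i$ has superscript $m+1=2n+1$ (so the relevant value of $\floor{\cdot/2}$ is $n$), those determining $B_i$ and $C_i$ have superscript $2n$, and that determining $D_i$ has superscript $2n-1$. This yields closed forms for $A_i,B_i,C_i$ in terms of $\abs{Y}$, and because $D'_0=D_0=0$ it forces $D_i=0$, i.e.\ $a_{2i-1,1}=a_{2i-1,-1}$. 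I would then finish as in Theorem~\ref{thm:inner_dist_odd}: the system \eqref{eqn:def_ABCD} determines $a_{i,1}+a_{i,-1}$ uniquely from $(A_s)$ and $(B_s)$, while $a_{2i,1}-a_{2i,-1}=\eta(-1)^iq^iC_i$ supplies the type-dependence, and a routine check using \eqref{eqn:Pascal_triangle} and \eqref{eqn:sum_binomials} shows the displayed expressions satisfy this system.

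The case $X(2n+1,q)$ is where the real work lies. The transforms determining $B_i$ and $C_i$ both have superscript $2n+1$ and invert via the simplified Lemma~\ref{lem:eq_sys_solution}, again giving $a_{2i,1}-a_{2i,-1}=\eta(-1)^iq^iC_i$, and $D_i=0$ as before. The transform determining $A_i$, however, has superscript $m+1=2n+2$, for which the simplified formula would require $A'_{n-\d+1}=0$, one condition more than the design provides. I would therefore apply the general formula of Lemma~\ref{lem:eq_sys_solution}; since $A'_r=0$ for $1\le r\le n-\d$, its inner sum collapses to the single surviving term ($j=i-\d$, $r=n-\d+1$), so $A_i$ equals its main part plus a known multiple of the lone unknown $A'_{n-\d+1}$. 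To pin this unknown down I would feed the already-determined numbers $B_s$ forward through $C'_r=\sum_sF^{(2n+1)}_r(s)B_s$: taking $j=n-\d+1$ in \eqref{eqn:ev_transform} and using $B_0=1$, $B_s=0$ for $1\le s\le\d-1$, and $C'_1=\dots=C'_{n-\d}=0$ isolates $C'_{n-\d+1}={n\brack\d-1}\bigl(q^{(2n+1)(n-\d+1)}-\abs{Y}\bigr)$, whence $A'_{n-\d+1}=-q^{n-\d+1}C'_{n-\d+1}$ expresses the datum through $\abs{Y}$. Substituting back gives $A_i$ explicitly, and $(a_{i,\t})$ follows from \eqref{eqn:def_ABCD}.

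The main obstacle is the bookkeeping in this last case. Passing from $(A_s),(B_s)$ to $a_{2i-1,\t}$ and $a_{2i,\t}$ requires summing the correction terms through the recursion $a_{2i}=a_{2i-2}+A_i-B_{i-1}$, which calls for the partial-sum identity $\sum_{u=0}^{k}(-1)^uq^{u(u-1)}{N\brack u}=(-1)^kq^{k(k+1)}{N-1\brack k}$ (a consequence of \eqref{eqn:q-binomial}). The delicate point is that the factor $q^{n-\d+1}+1$ appearing in the stated correction terms does not come from $A'_{n-\d+1}$ alone: the $j=i-\d$ boundary term of the main part of $A_i$ also carries a $\bigl(\abs{Y}/q^{(2n+1)(n-\d+1)}-1\bigr)$ contribution, and only after this is combined with the $A'_{n-\d+1}$-contribution does the coefficient consolidate into $q^{n-\d+1}+1$. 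Rather than carry out this forward summation in full, I would, exactly as in Theorem~\ref{thm:inner_dist_odd}, verify directly that the displayed expressions for $a_{2i-1,\t}$ and $a_{2i,\t}$ reproduce the determined values of $A_i,B_i,C_i$ and $D_i=0$; the uniqueness of the solution of \eqref{eqn:def_ABCD} then completes the proof.
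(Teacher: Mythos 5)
Your proposal is correct and follows essentially the same route as the paper: the same reduction to $A_s,B_s,C_s,D_s$ via Lemmas~\ref{lem:four_equations} and~\ref{lem:eq_sys_solution}, and in the $X(2n+1,q)$ case the same key step of isolating $C'_{n-\d+1}={n\brack\d-1}\bigl(q^{(2n+1)(n-\d+1)}-\abs{Y}\bigr)$ from \eqref{eqn:ev_transform} and converting it to $A'_{n-\d+1}$ through the relation $A'_{n-\d+1}=-q^{n-\d+1}C'_{n-\d+1}$ supplied by the $(2n-2\d+1,\eta(-1)^{n-\d+1})$-design condition. You in fact supply more detail than the paper's own (very terse) argument, and your accounting of which transforms admit the simplified inversion and which require the extra $A'_{n-\d+1}$ term is accurate.
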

\begin{proof}
Given a subset $Y$ of $X(m,q)$, we let $A_s,B_s,C_s,D_s$ and $A'_r,B'_r,C'_r,D'_r$ be as defined in the proof of Theorem~\ref{thm:inner_dist_odd}. The proof of the proposition is similar to the proof of Theorem~\ref{thm:inner_dist_odd}, and so we restrict ourselves to provide expressions for the numbers $A_i$, $B_i$, $C_i$, and $D_i$.
\par
First suppose that $Y$ is a $(2\d)$-code and a $(2n-2\d+1)$-design in $X(2n,q)$. In this case we have
\begin{align*}
A_i&={n\brack i}\sum_{j=0}^{i-\d}(-1)^jq^{j(j-1)}{i\brack j}\bigg(\frac{\abs{Y}}{q^{(2n+1)(n+j-i)}}-1\bigg),\\
B_i&={n\brack i}\sum_{j=0}^{i-\d}(-1)^jq^{j(j-1)}{i\brack j}\bigg(\frac{\abs{Y}}{q^{(2n-1)(n+j-i)}}-1\bigg),\\
C_i&={n\brack i}\sum_{j=0}^{i-\d}(-1)^jq^{j(j-1)}{i\brack j}\bigg(\frac{\abs{Y}}{q^{(2n-1)(n+j-i)}q^{2n}}-1\bigg)
\end{align*}
for $i>0$ and $D_i=0$ for all $i$.
\par
Now suppose that $Y$ is a $(2\d)$-code and a $(2n-2\d+1,\eta(-1)^{n-\d+1})$-design in $X(2n+1,q)$. From Lemmas~\ref{lem:four_equations} and~\ref{lem:eq_sys_solution} we obtain
\begin{align}
A_i&={n+1\brack i}\sum_{j=0}^{i-\d}(-1)^jq^{j(j-1)}{i\brack j}\bigg(\frac{\abs{Y}}{q^{(2n+1)(n+1+j-i)}}-1\bigg)   \label{eqn:A_nd1}\\
&\qquad+(-1)^{i-\d}q^{(i-\d)(i-\d-1)}{n-\d+1\brack n-i+1}\frac{A'_{n-\d+1}}{q^{(2n+1)(n-\d+1)}}   \nonumber
\end{align}
for $i>0$. We compute $A'_{n-\d+1}$ next. From Lemma~\ref{lem:four_equations} and~\eqref{eqn:ev_transform} we find that
\[
\sum_{r=0}^{n-\d+1}{n-r\brack \d-1}C'_r=q^{(2n+1)(n-\d+1)}\sum_{s=0}^n{n-s\brack n-\d+1}B_s,
\]
which implies
\[
{n\brack \d-1}C'_0+C'_{n-\d+1}=q^{(2n+1)(n-\d+1)}{n\brack \d-1}B_0.
\]
Hence, since $C'_0=\abs{Y}$ and $B_0=1$,
\[
C'_{n-\d+1}={n\brack \d-1}\big(q^{(2n+1)(n-\d+1)}-\abs{Y}\big).
\]
Using our assumption that $Y$ is a $(2n-2\d+1,\eta(-1)^{n-\d+1})$-design and the definitions~\eqref{eqn:def_ABCD_dual} of the numbers $A'_r$ and $C'_r$, we find that 
\[
A'_{n-\d+1}=-q^{n-\d+1}C'_{n-\d+1}.
\]
Substitute into~\eqref{eqn:A_nd1} to obtain
\begin{align*}
A_i&={n+1\brack i}\sum_{j=0}^{i-\d}(-1)^jq^{j(j-1)}{i\brack j}\bigg(\frac{\abs{Y}}{q^{(2n+1)(n+1+j-i)}}-1\bigg)\\
&\qquad+(-1)^{i-\d}q^{(i-\d)(i-\d-1)}q^{n-\d+1}{n-\d+1\brack n-i+1}{n\brack \d-1}\bigg(\frac{\abs{Y}}{q^{(2n+1)(n-\d+1)}}-1\bigg)
\end{align*}
for $i>0$. Using Lemmas~\ref{lem:four_equations} and~\ref{lem:eq_sys_solution}, it is easy to find that
\begin{align*}
B_i&={n\brack i}\sum_{j=0}^{i-\d}(-1)^jq^{j(j-1)}{i\brack j}\bigg(\frac{\abs{Y}}{q^{(2n+1)(n+j-i)}}-1\bigg),\\
C_i&={n\brack i}\sum_{j=0}^{i-\d}(-1)^jq^{j(j-1)}{i\brack j}\bigg(\frac{\abs{Y}}{q^{(2n+1)(n+1+j-i)}}-1\bigg)
\end{align*}
for $i>0$ and $D_i=0$ for all $i$. As before, it is now straightforward to verify the expressions for $(a_{i,\t})$ given in the theorem.
\end{proof}


\subsection{A combinatorial characterisation of designs}

Designs in association schemes often have a nice combinatorial interpretation. This has been established for many classical association schemes~\cite{Del1973},~\cite{Del1976a},~\cite{Sta1986},~\cite{Mun1986}. The main result of this section is the following combinatorial characterisation of $t$-designs in $X(m,q)$.
\begin{theorem}
\label{thm:designs}
Let $U$ be a $t$-dimensional subspace of $V(m,q)$ and let $A$ be a symmetric bilinear form on $U$. Then a subset $Y$ of $X(m,q)$ (whose forms are defined on $V(m,q)$) is a $t$-design if and only if the number of forms in $Y$ that are an extension of $A$ is independent of the choice of $U$ and $A$.
\end{theorem}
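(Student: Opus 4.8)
The plan is to collapse both sides of the equivalence onto a single condition, namely that the character sum $\sum_{B\in Y}\langle B,D\rangle$ vanishes for every nonzero form $D$ of rank at most $t$, and then to match them. First I would reinterpret the design condition analytically. Expanding the definition~\eqref{eqn:def_dual_distribution} and using the definitions~\eqref{eqn:P_numbers},~\eqref{eqn:Q_from_P} of the $P$- and $Q$-numbers together with the symmetry $\tr(CB)=\tr(BC)$, one regroups the double sum into
\[
a'_{k,\e}=\frac{1}{\abs{Y}}\sum_{B\in X_{k,\e}}\Bigabs{\sum_{A\in Y}\langle A,B\rangle}^2,
\]
the point being that the cross term $\sum_{B\in X_{k,\e}}\langle A-A',B\rangle$ equals $Q_{k,\e}$ evaluated at the rank and type of $A-A'$ (this display also reproves Lemma~\ref{lem:dual_nonnegative}). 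Since a sum of nonnegative terms vanishes only if each does, $a'_{k,\e}=0$ precisely when $\sum_{A\in Y}\langle A,B\rangle=0$ for all $B\in X_{k,\e}$. Hence $Y$ is a $t$-design if and only if $\sum_{A\in Y}\langle A,B\rangle=0$ for every $B$ with $1\le\rank B\le t$.

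Second I would reinterpret the combinatorial condition by Fourier analysis over the group $X(U)$ of symmetric bilinear forms on $U$. Restriction $\pi_U\colon B\mapsto B|_{U}$ is a surjective homomorphism onto $X(U)$, and the number of extensions of $A$ lying in $Y$ is $\abs{\pi_U^{-1}(A)\cap Y}$. For a fixed $U$ this count is independent of $A$ if and only if the pushforward of $Y$ under $\pi_U$ is uniform on $X(U)$, which, since a function on a finite abelian group is constant exactly when all its nontrivial Fourier coefficients vanish, holds if and only if $\sum_{B\in Y}\psi(\pi_U B)=0$ for every nontrivial character $\psi$ of $X(U)$. Because a constant count is forced to equal $\abs{Y}/q^{t(t+1)/2}$ for every $t$-dimensional $U$, independence over all pairs $(U,A)$ is equivalent to independence in $A$ for each $U$ separately.

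The crux is to identify these pulled-back characters with the forms of rank at most $t$. Fixing a basis of $U$ and letting $P$ be the $m\times t$ matrix expressing it in the fixed basis of $V$, the restriction $\pi_U B$ has matrix $P^{\mathsf T}\mathbf{B}P$, so a character $A_0\mapsto\chi(\tr(A_0C_0))$ of $X(U)$ pulls back to $B\mapsto\chi(\tr(\mathbf{B}\,PC_0P^{\mathsf T}))=\langle B,D\rangle$, where $D$ has matrix $PC_0P^{\mathsf T}$; as $P$ has full column rank, $\rank D=\rank C_0$. I would then prove the set identity
\[
\{\,PC_0P^{\mathsf T}:\dim U=t,\ C_0\in X(U)\setminus\{0\}\,\}=\{\,D\in X(m,q):1\le\rank D\le t\,\}.
\]
The inclusion $\subseteq$ is the rank identity just noted. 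For $\supseteq$, given $D$ of rank $r\le t$ I would take any $t$-dimensional $U$ containing the column space of $\mathbf{D}$ and set $C_0=P^{+}\mathbf{D}(P^{+})^{\mathsf T}$ for a left inverse $P^{+}$ of $P$; since both the column and row spaces of $\mathbf{D}$ lie in $U$, one checks $PC_0P^{\mathsf T}=\mathbf{D}$ with $C_0$ symmetric and $\rank C_0=r\ge1$.

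Combining the three steps, $\abs{\pi_U^{-1}(A)\cap Y}$ is independent of $(U,A)$ exactly when $\sum_{B\in Y}\langle B,D\rangle=0$ for all $D$ with $1\le\rank D\le t$, which is exactly the design condition obtained in the first step; this proves the equivalence. The main obstacle is the set identity in the third paragraph, together with the subtlety that the pairing $\langle\,\cdot\,,\cdot\,\rangle$ depends on the fixed basis and is \emph{not} $\GL(V)$-invariant, so that one cannot simply diagonalise $D$ by a change of basis but must argue directly with the congruence $PC_0P^{\mathsf T}$. Note that, unlike the inner-distribution computations, this argument is uniform in the parity of $m$ and ignores the type entirely, since it ranges over all forms of rank at most $t$ at once.
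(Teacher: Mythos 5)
Your proof is correct, but it takes a genuinely different route from the paper. The paper fixes a $t$-dimensional subspace $W$, shows that $Z=\{B\in X: B|_W=0\}$ is a $t$-antidesign by computing its dual $Z^\perp$ (whose forms are supported on $W$ and hence have rank at most $t$) and appealing to Theorem~\ref{thm:dual}, and then invokes the Roos--Delsarte characterisation (Lemma~\ref{lem:design_antidesign}): $Y$ is a $t$-design if and only if $\abs{Y\cap gZ}$ is constant over $g\in G$, the translates $gZ$ being precisely the sets of extensions of a form $A$ on a $t$-dimensional $U$. You instead argue directly: the identity $a'_{k,\e}=\abs{Y}^{-1}\sum_{B\in X_{k,\e}}\bigabs{\sum_{A\in Y}\langle A,B\rangle}^2$ converts the design condition into the vanishing of character sums over all $B$ with $1\le\rank B\le t$, while Fourier inversion on $X(U)$ converts the uniform-extension condition into the vanishing of the same sums over the pulled-back characters $\langle\,\cdot\,,PC_0P^{\mathsf T}\rangle$; your set identity $\{PC_0P^{\mathsf T}\}=\{D:1\le\rank D\le t\}$ (correctly proved via the idempotent $PP^{+}$ acting as the identity on the column space of $\mathbf D$) closes the loop. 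What your approach buys is self-containedness: it avoids citing Roos's theorem, which the paper must note was proved only for symmetric schemes and needs adapting to the commutative case, and it reproves Lemma~\ref{lem:dual_nonnegative} along the way. What the paper's approach buys is brevity, since the duality machinery (Theorem~\ref{thm:dual}) is already in place; note also that the combinatorial core is the same in both arguments, appearing in the paper as the statement that $\{B|_W:B\in Z^\perp\}=X(t,q)$ and in yours as the surjectivity half of the set identity.
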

\par
In order to prove Theorem~\ref{thm:designs}, we need the notion of a \emph{$t$-antidesign} in $X(m,q)$, which we define to be a subset of $X(m,q)$ whose dual inner distribution $(a'_{k,\e})$ satisfies $a'_{k,1}=a'_{k,-1}=0$ for all $k>t$ and $a'_{k,1}a'_{k,-1}\ne 0$ for all $k$ satisfying $1\le k\le t$.
\par
Throughout the remainder of this section, let $G=\GL(V(m,q))\rtimes X(m,q)$ be the group of automorphisms that acts on $X(m,q)$ as defined by~\eqref{eqn:group_action}. The following lemma is a consequence of a result due to Roos~\cite[Theorem 3.4]{Roo1982} (see also Delsarte~\cite[Theorem~6.8]{Del1977}), which characterises $t$-designs in terms of $t$-antidesigns.\footnote{It should be noted that \cite[Theorem 3.4]{Roo1982} and~\cite[Theorem~6.8]{Del1977} were proved only for symmetric association schemes, but the proof of \cite[Theorem 3.4]{Roo1982} is easily adapted to work for commutative association schemes.}
\begin{lemma}
\label{lem:design_antidesign}
Let $Y$ and $Z$ be two subsets of $X(m,q)$ and suppose that $Z$ is a $t$-antidesign. Then $Y$ is a $t$-design if and only if $\abs{Y\cap Z}=\abs{Y\cap gZ}$ for all $g\in G$.
\end{lemma}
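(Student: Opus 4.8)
The plan is to reduce the statement to the representation theory of $G$ acting on $\mathbb{C}^X$ and then apply Schur orthogonality, which is the mechanism underlying Roos's theorem. Since the relations $R_{i,\t}$ are precisely the orbits of $G$ on $X\times X$, the Bose--Mesner algebra is the commutant of the permutation representation of $G$ on $\mathbb{C}^X$, and commutativity of the scheme is equivalent to this representation being multiplicity-free. The adjacency matrices $A_{i,\t}$ are real, pairwise commuting and normal (their transposes are the adjacency matrices of the inverse relations and again lie in the commutative algebra), hence simultaneously unitarily diagonalisable; consequently the primitive idempotents $E_{k,\e}$ are self-adjoint orthogonal projections and $\mathbb{C}^X=\bigoplus_{k,\e}W_{k,\e}$, with $W_{k,\e}=E_{k,\e}\mathbb{C}^X$, is an orthogonal decomposition into pairwise inequivalent irreducible $G$-modules, the module $W_{0,1}=\mathbb{C}\mathbf 1$ being trivial. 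Writing $(\,\cdot\,,\,\cdot\,)$ for the standard Hermitian form on $\mathbb{C}^X$ and $\mathbf 1_Y$ for a characteristic vector, the identity $\mathbf 1_Y^{\mathsf T}A_{i,\t}\mathbf 1_Y=\abs Y a_{i,\t}$ yields
\[
a'_{k,\e}=\frac{\abs X}{\abs Y}\,\bigl(E_{k,\e}\mathbf 1_Y,\,E_{k,\e}\mathbf 1_Y\bigr),
\]
which is the computation behind Lemma~\ref{lem:dual_nonnegative}. Thus $Y$ is a $t$-design exactly when $E_{k,\e}\mathbf 1_Y=0$ for $1\le k\le t$, and $Z$ is a $t$-antidesign exactly when $E_{k,\e}\mathbf 1_Z=0$ for $k>t$ while $E_{k,\e}\mathbf 1_Z\ne0$ for $1\le k\le t$.

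With this dictionary fixed, the key step is to expand $\abs{Y\cap gZ}$ spectrally. Put $y_{k,\e}=E_{k,\e}\mathbf 1_Y$ and $z_{k,\e}=E_{k,\e}\mathbf 1_Z$. Since $\mathbf 1_{gZ}=g\mathbf 1_Z$, one has $\abs{Y\cap gZ}=(\mathbf 1_Y,g\mathbf 1_Z)$, and because $g$ acts unitarily and stabilises each $W_{k,\e}$, mutual orthogonality of the $W_{k,\e}$ kills all cross terms, leaving
\[
\abs{Y\cap gZ}=\sum_{k,\e}\bigl(y_{k,\e},\,g\,z_{k,\e}\bigr),
\]
where each summand $\phi_{k,\e}(g)=(y_{k,\e},g\,z_{k,\e})$ is a matrix coefficient of the irreducible module $W_{k,\e}$. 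For the forward implication, if $Y$ is a $t$-design and $Z$ a $t$-antidesign then in every summand either $y_{k,\e}=0$ (for $1\le k\le t$) or $z_{k,\e}=0$ (for $k>t$), so only the term indexed by $(0,1)$ survives; as $g$ fixes $\mathbf 1$, this term equals $\abs Y\abs Z/\abs X$ for every $g$, and $\abs{Y\cap gZ}$ is constant.

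For the converse I would suppose $\abs{Y\cap gZ}$ is constant, so $\sum_{(k,\e)\ne(0,1)}\phi_{k,\e}\equiv0$. By Schur's orthogonality relations the matrix coefficients of pairwise inequivalent irreducible representations span mutually orthogonal subspaces of $L^2(G)$; since the $W_{k,\e}$ are pairwise inequivalent, this forces $\phi_{k,\e}\equiv0$ for each $(k,\e)\ne(0,1)$. Now fix $k$ with $1\le k\le t$. Because $Z$ is a $t$-antidesign, $z_{k,\e}\ne0$, so the orbit $\{g\,z_{k,\e}:g\in G\}$ spans the irreducible module $W_{k,\e}$; from $(y_{k,\e},g\,z_{k,\e})=0$ for all $g$ we conclude $y_{k,\e}\perp W_{k,\e}$, and since $y_{k,\e}\in W_{k,\e}$ this gives $y_{k,\e}=0$. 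Hence $E_{k,\e}\mathbf 1_Y=0$ for $1\le k\le t$, i.e.\ $Y$ is a $t$-design.

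The hard part will not be the orthogonality argument, which is standard and entirely insensitive to whether the scheme is symmetric, but the justification of the representation-theoretic dictionary in the present commutative (and possibly non-symmetric) setting: that the $E_{k,\e}$ are genuinely self-adjoint orthogonal projections onto pairwise inequivalent irreducibles, and that $a'_{k,\e}=0$ is equivalent to $E_{k,\e}\mathbf 1_Y=0$ even when the $Q$-numbers are complex. This is exactly the place where Roos's argument, stated for symmetric schemes, must be adapted, as flagged in the footnote; the adaptation rests on the observation that, because the $A_{i,\t}$ are commuting real normal matrices, their spectral projections $E_{k,\e}$ remain self-adjoint despite the non-symmetry of the scheme.
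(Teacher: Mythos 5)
Your proof is correct. Note, however, that the paper does not actually prove this lemma: it cites Roos's theorem (and Delsarte's earlier result on pairs of vectors), adding only a footnote that those proofs, written for \emph{symmetric} schemes, are ``easily adapted'' to the commutative case. What you have done is carry out that adaptation in full, by a somewhat more overtly representation-theoretic route than the combinatorial arguments inside the Bose--Mesner algebra that Roos and Delsarte use. The two places where your write-up earns its keep are exactly the two places the footnote glosses over. First, your dictionary $a'_{k,\e}=\frac{\abs{X}}{\abs{Y}}\,\bigl(E_{k,\e}\mathbf 1_Y,\,E_{k,\e}\mathbf 1_Y\bigr)$ requires the primitive idempotents to be Hermitian even though the scheme is not symmetric; you justify this correctly from the fact that the adjacency matrices are real, pairwise commuting, and closed under transposition (axiom (A2)), so the algebra is a commutative $*$-algebra and the $E_{k,\e}$ are orthogonal projections --- this is precisely the adaptation the paper leaves to the reader, and it is what makes ``$a'_{k,\e}=0$ iff $E_{k,\e}\mathbf 1_Y=0$'' valid when the $Q$-numbers are complex. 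Second, in the converse your appeal to Schur orthogonality silently repairs a step that a careless argument would miss: constancy of $\abs{Y\cap gZ}$ only yields that $\sum_{(k,\e)\ne(0,1)}\phi_{k,\e}$ is a \emph{constant} function on $G$, not a priori the zero function; since constants are matrix coefficients of the trivial module and the $W_{k,\e}$ with $(k,\e)\ne(0,1)$ are nontrivial and pairwise inequivalent (multiplicity-freeness being equivalent to commutativity of the scheme, as you note), orthogonality of coefficient spaces forces the constant to vanish together with each $\phi_{k,\e}$, after which irreducibility --- the $G$-orbit of the nonzero vector $z_{k,\e}$ spans $W_{k,\e}$ --- finishes the argument exactly as you say. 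Comparing the two routes: the citation keeps the paper short and defers the symmetric-versus-commutative issue to a remark, whereas your proof is self-contained, works verbatim for any commutative scheme arising from a transitive group action, and makes transparent that only multiplicity-freeness, never symmetry, is what the equivalence needs.
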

\par
We now prove Theorem~\ref{thm:designs}.
\begin{proof}[Proof of Theorem~\ref{thm:designs}]
We construct a $t$-antidesign in $X=X(m,q)$ and then invoke Lemma~\ref{lem:design_antidesign}. Let $W$ be a fixed $t$-dimensional subspace of $V=V(m,q)$. For $B\in X$ let $B|_W$ be its restriction on $W$ and write
\[
Z=\{B\in X:B|_W=0\}.
\]
We show that $Z$ is a $t$-antidesign. It is plain that $Z$ is additive. By a suitable choice of a basis for $V$, we may assume that the matrices corresponding to the forms in $Z$ range over all symmetric matrices containing a $t\times t$ all-zero submatrix in the upper left corner. From the definition~\eqref{eqn:def_dual} of the dual we readily verify that the matrices corresponding to the forms in $Z^\perp$ must have zero entries in the last $m-t$ rows and columns, forcing their ranks to be at most~$t$. Indeed $\{B|_W:B\in Z^\perp\}$ equals $X(t,q)$. Hence we conclude from Theorem~\ref{thm:dual} that $Z$ is a $t$-antidesign.
\par
Now write
\[
Z'=\{B\in X:\text{$B$ is an extension of $A$}\}.
\]
Then there exists $g\in G$ such that $Z'=gZ$. Conversely, for each $g\in G$, we can choose $U$ and $A$ such that $Z'=gZ$. Since $\abs{Y\cap Z'}$ is the number of forms in $Y$ that are extensions of $A$, the theorem follows from Lemma~\ref{lem:design_antidesign}.
\end{proof}


\section{Constructions}
\label{sec:constructions}

We say that a $d$-code $Y$ in $X(m,q)$ is \emph{maximal additive} if $Y$ is additive and equality holds in Theorem~\ref{thm:bound_sg}. In this section, we present constructions for maximal additive $d$-codes in $X(m,q)$. Similar constructions have been given already in~\cite{Sch2010}.\footnote{The constructions in~\cite{Sch2010} have been given for finite fields of even characteristic, but as noted in~\cite[Section 5]{Sch2010}, they still work for finite fields of odd characteristic.} Here we pursue a simpler, yet more general, approach.
\par
We shall first show that it is sufficient to construct $d$-codes in $X(m,q)$ for even $m-d$. Let $Y$ be a subset of $X(m,q)$, whose forms are defined on $V=V(m,q)$. Let $W$ be an $(m-1)$-dimensional subspace of $V$, and define the \emph{punctured} set (with respect to $W$) of $Y$ to be
\[
Y^*=\big\{B|_W:B\in Y\big\},
\] 
where $B|_W$ is the restriction of $B$ onto $W$.
\begin{theorem}
\label{thm:code_punctured}
Suppose that $Y$ is a maximal additive $(d+2)$-code in $X(m+1,q)$ for some $d\ge 1$ such that $m-d-1$ is even. Then every punctured set $Y^*$ is a maximal additive $d$-code in $X(m,q)$.
\end{theorem}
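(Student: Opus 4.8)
The plan is to verify for $Y^*$ the three defining features of a maximal additive $d$-code in $X(m,q)$: additivity, the $d$-code property, and the fact that $\abs{Y^*}$ meets the bound of Theorem~\ref{thm:bound_sg} with equality. Additivity is immediate. Choosing a basis $\alpha_1,\dots,\alpha_m$ of $W$ and extending it to a basis $\alpha_1,\dots,\alpha_{m+1}$ of $V=V(m+1,q)$, the restriction map $B\mapsto B|_W$ is $\F_q$-linear (in particular additive) from $X(m+1,q)$ to $X(m,q)$, so the image $Y^*$ of the subgroup $Y$ is again a subgroup.

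The crux is the rank estimate
\[
\rank(B|_W)\ge \rank(B)-2\qquad\text{for every $B\in X(m+1,q)$.}
\]
First I would prove this with the basis above: the matrix of $B|_W$ is obtained from the matrix of $B$ by deleting the last row and the last column, and each deletion decreases the rank by at most one. With this estimate both the injectivity of the restriction on $Y$ and the $d$-code property follow simultaneously. Let $B,B'$ be distinct elements of $Y$; since $Y$ is additive, $B-B'$ is a nonzero element of the $(d+2)$-code $Y$, so $\rank(B-B')\ge d+2$, and the estimate gives
\[
\rank(B|_W-B'|_W)=\rank\big((B-B')|_W\big)\ge (d+2)-2=d.
\]
As $d\ge 1$ this is positive, so $B|_W\ne B'|_W$; hence the restriction is injective, $\abs{Y^*}=\abs{Y}$, and at the same time any two distinct elements of $Y^*$ differ by a form of rank at least $d$, so $Y^*$ is a $d$-code.

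It remains to check maximality, which reduces to matching two instances of Theorem~\ref{thm:bound_sg}. Since $(m+1)-(d+2)=m-d-1$ is even and $Y$ is maximal additive in $X(m+1,q)$, the even case of the bound gives $\abs{Y}=q^{(m+1)((m+1)-(d+2)+2)/2}=q^{(m+1)(m-d+1)/2}$. On the other hand $m-d$ is odd, so the bound of Theorem~\ref{thm:bound_sg} for a $d$-code in $X(m,q)$ is exactly $q^{(m+1)(m-d+1)/2}$. Thus $\abs{Y^*}=\abs{Y}$ meets this bound, and $Y^*$ is a maximal additive $d$-code. I expect no deep obstacle here: the single genuinely geometric ingredient is the codimension-one rank estimate, and the only point requiring care is the parity bookkeeping that makes the two bounds coincide. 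Indeed, the hypothesis that $m-d-1$ be even is exactly what is needed, since with the opposite parity the two bounds would differ by a factor $q^d$ and the punctured code would no longer be maximal.
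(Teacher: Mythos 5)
Your proof is correct and follows essentially the same route as the paper: the paper's proof is a one-line appeal to Theorem~\ref{thm:bound_sg} together with the ``elementary fact'' that puncturing a $(d+2)$-code yields a $d$-code, and your argument simply fills in the details of that fact (the codimension-one rank estimate, injectivity of restriction, and the parity bookkeeping matching the two bounds). All of these details check out.
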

\begin{proof}
The theorem follows from Theorem~\ref{thm:bound_sg} and the elementary fact that, if $Y$ is a $(d+2)$-code in $X(m+1,q)$ for some $d\ge 1$, then a punctured set $Y^*$ of $Y$ is a $d$-code in $X(m,q)$.
\end{proof}
\par
The following lemma implies the useful fact that, if $Y$ is an additive $d$-code for some $d\ge 3$ with the property of being a certain design, then each of its punctured versions $Y^*$ also has this design property.
\begin{lemma}
\label{lem:design_punctured}
Let $Y$ be an additive subset of $X(m+1,q)$ and suppose that $Y$ is a $d$-code for some $d\ge 3$. Let $Y^*$ be a punctured version of $Y$ and let $(a'_{k,\e})$ and $(b'_{k,\e})$ be the dual inner distributions of $Y$ and $Y^*$, respectively.~Then
\[
a'_{k,\e}=0\Rightarrow b'_{k,\e}=0.
\]
\end{lemma}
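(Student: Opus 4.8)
The plan is to pass from the dual inner distributions to the duals themselves and to exploit that puncturing and its adjoint, zero‑extension, interact cleanly with rank and type. Since restriction $B\mapsto B|_W$ is a homomorphism of $(X(m+1,q),+)$ onto $(X(m,q),+)$, the punctured set $Y^*$ is additive, so Theorem~\ref{thm:dual} applies to both $Y$ and $Y^*$. Combined with the fact that an additive set has $a_{i,\t}=\abs{Y\cap X_{i,\t}}$, it gives $\abs{Y}\,\abs{Y^\perp\cap X_{k,\e}}=a'_{k,\e}$ and $\abs{Y^*}\,\abs{(Y^*)^\perp\cap X_{k,\e}}=b'_{k,\e}$, where the indices $(k,\e)$ now refer to rank and type. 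As $\abs{Y},\abs{Y^*}>0$, the claim $a'_{k,\e}=0\Rightarrow b'_{k,\e}=0$ is equivalent to: if $Y^\perp$ contains no form of rank $k$ and type $\e$, then neither does $(Y^*)^\perp$. It therefore suffices to embed the rank‑$k$, type‑$\e$ forms of $(Y^*)^\perp$ into those of $Y^\perp$.

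To do this I would fix a basis $\alpha_1,\dots,\alpha_{m+1}$ of $V(m+1,q)$ with $\alpha_1,\dots,\alpha_m$ a basis of $W$, so that $B|_W$ is the upper‑left $m\times m$ block of $B$, and introduce the zero‑extension $\iota\colon X(m,q)\to X(m+1,q)$ sending $C$ to the $(m+1)\times(m+1)$ matrix with $C$ in the upper‑left block and zeros elsewhere. The key computation is that restriction and $\iota$ are adjoint for the pairing~\eqref{eqn:def_inner_product} with a common $\chi$: writing $B=\left(\begin{smallmatrix}B|_W & b\\ b^\top & \beta\end{smallmatrix}\right)$, one has $\tr(B\,\iota(C))=\tr((B|_W)\,C)$, hence $\langle B,\iota(C)\rangle=\langle B|_W,C\rangle$ for all $B\in X(m+1,q)$ and $C\in X(m,q)$. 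Feeding this into the definition~\eqref{eqn:def_dual} of the dual yields $(Y^*)^\perp=\{C\in X(m,q):\iota(C)\in Y^\perp\}=\iota^{-1}(Y^\perp)$.

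Finally I would observe that $\iota$ preserves rank and type: extending a basis that diagonalises $C$ as $[z,1,\dots,1,0,\dots,0]$ by one further vector diagonalises $\iota(C)$ as $[z,1,\dots,1,0,\dots,0,0]$, so $\rank\iota(C)=\rank C$ and $\iota(C),C$ have equal type. Consequently any $C\in(Y^*)^\perp$ of rank $k$ and type $\e$ produces $\iota(C)\in Y^\perp$ of rank $k$ and type $\e$; taking the contrapositive gives the asserted implication. The only point requiring care—and the main obstacle—is the adjointness identity together with the book‑keeping ensuring that the index $(k,\e)$ carries the same (rank, type) meaning in both schemes, which is exactly what Theorem~\ref{thm:dual} supplies for a common choice of $\chi$. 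I note that the implication itself follows from this duality for every $d$; the hypothesis $d\ge 3$ enters only in the surrounding application, where it guarantees that restriction is injective on $Y$ (any nonzero form whose $W$‑block vanishes has rank at most $2$), so that $Y^*$ is genuinely a code.
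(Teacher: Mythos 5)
Your proof is correct and follows essentially the same route as the paper: the paper likewise reduces to Theorem~\ref{thm:dual} and the identity $(Y^*)^\perp=(Y^\perp)^-$, where $(Y^\perp)^-$ is the shortened code, which is exactly your $\iota^{-1}(Y^\perp)$. You merely spell out the adjointness computation and the rank/type preservation that the paper leaves as "readily verified," and your observation about where $d\ge 3$ enters matches the paper's use of it to get $\abs{Y}=\abs{Y^*}$.
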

\begin{proof}
We identify symmetric bilinear forms with their matrices and choose the implicit basis such that the matrices in $Y^*$ are $m\times m$ submatrices of the matrices in~$Y$. Since $Y$ is a $d$-code for some $d\ge 3$, we have $\abs{Y}=\abs{Y^*}$. We consider the duals $Y^\perp$ and $(Y^*)^\perp$ of $Y$ and $Y^*$, respectively. Since $Y$ is additive, Theorem~\ref{thm:dual} asserts that $\abs{Y}^{-1}(a'_{k,\e})$ is the inner distribution of $Y^\perp$ and that $\abs{Y}^{-1}(b'_{k,\e})$ is the inner distribution of $(Y^*)^\perp$. Let~$(Y^\perp)^-$ be the ``shortened'' version of $Y^\perp$, namely the subset of $X(m,q)$ obtained by taking those matrices in $Y^\perp$ whose last row (and column) is zero followed by deleting the last row and the last column. It is readily verified that
\[
(Y^*)^\perp=(Y^\perp)^-,
\]
from which the lemma follows.
\end{proof}
\par
Next we provide constructions of $d$-codes in $X(m,q)$ for even $m-d$. We identify $V=V(m,q)$ with $\F_{q^m}$ and use the trace function $\Tr:\F_{q^m}\to\F_q$, given by
\[
\Tr(x)=\sum_{k=0}^{m-1}x^{q^k}.
\]
Let $s$ be a positive integer coprime to $m$, let $t$ be an integer satisfying $0\le t\le (m-1)/2$, and let $\lambda=(\lambda_0,\lambda_1,\dots,\lambda_t)\in V^{t+1}$. Define the symmetric bilinear form
\begin{gather}
B_\lambda:V\times V\to \F_q   \nonumber\\
B_\lambda(x,y)=\Tr\bigg(\lambda_0xy+\sum_{j=1}^t\lambda_j\big(x^{q^{sj}}y+xy^{q^{sj}}\big)\bigg)   \label{eqn:Beven}
\end{gather}
and a subset of $X(m,q)$ by
\[
Y_s(t,m,q)=\big\{B_\lambda:\lambda\in V^{t+1}\big\}.
\]
\par
By the linearity of the trace function, $Y_s(t,m,q)$ and each of its punctured versions are additive. We shall see that $Y_s(t,m,q)$ is an $(m-2t)$-code in $X(m,q)$ of size $q^{m(t+1)}$. Hence, $Y_s(t,m,q)$ (and by Theorem~\ref{thm:code_punctured} also its punctured versions) meet the bounds of Theorem~\ref{thm:bound_sg}. By Lemma~\ref{lem:bound_odd_d}, the set $Y_s(t,m,q)$ is an $(2t+2)$-design for odd $m$. We shall also see that $Y_s(t,m,q)$ is a $(2t+1,\eta(-1)^{t+1})$-design for even~$m$. Therefore Theorem~\ref{thm:inner_dist_odd} and Proposition~\ref{pro:inner_dist_even} give the inner distributions of $Y_s(t,m,q)$ and its punctured versions (using Theorem~\ref{thm:code_punctured} and Lemma~\ref{lem:design_punctured}).
\par
To prove our results, we require the following lemma stated in~\cite[Corollary 1]{BraByrMarMcG2007} for $q=2$, whose proof however works for general $q$ using obvious modifications.
\begin{lemma}
\label{lem:zeros_lin_poly}
Let $m$, $s$, and $d$ be positive integers such that $s$ and $m$ are coprime and let $f\in\F_{q^m}[x]$ be given by
\[
f(x)=\sum_{j=0}^df_jx^{q^{sj}}.
\]
If $f$ is not identically zero, then $f$ has at most $q^d$ zeros in $\F_{q^m}$.
\end{lemma}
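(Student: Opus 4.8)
The plan is to reformulate the statement as a bound on the dimension of a kernel and then induct on $d$. Each map $x\mapsto x^{q^{sj}}$ is $\F_q$-linear on $\F_{q^m}$, so $f$ is an $\F_q$-linear endomorphism of $\F_{q^m}$ and its zero set $W=\{x\in\F_{q^m}:f(x)=0\}$ is an $\F_q$-subspace; hence the number of zeros is $q^{\dim W}$ and it suffices to show $\dim W\le d$. I would prove by induction on $d$ the statement that every nonzero polynomial of the given shape and degree at most $d$ has at most $q^d$ zeros in $\F_{q^m}$. The base case $d=0$ is immediate, since $f(x)=f_0x$ with $f_0\ne0$ vanishes only at $x=0$. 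In the inductive step I may assume $f_d\ne0$ (otherwise $f$ has degree at most $d-1$ and the induction hypothesis already gives the stronger bound $q^{d-1}$) and that $W\ne\{0\}$ (otherwise $\dim W=0$).

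The heart of the argument is to peel off one linear factor. Fix a nonzero $w_0\in W$, set $c=w_0^{q^s-1}$, and consider the linearized polynomial $h(x)=x^{q^s}-cx$, which satisfies $h(w_0)=0$. I would show that $f$ factors as a composition $f=g\circ h$, where $g(y)=\sum_{l=0}^{d-1}g_ly^{q^{sl}}$ is again a polynomial of the same shape, now of degree $d-1$ and with leading coefficient $g_{d-1}=f_d\ne0$. Expanding $g(h(x))$ and comparing the coefficient of $x^{q^{sj}}$ on both sides yields the recursion $g_{j-1}-c^{q^{sj}}g_j=f_j$, which (with $g_{-1}=g_d=0$) determines $g_{d-1},\dots,g_0$ uniquely as $j$ runs downward from $d$ to $1$ and leaves a single remainder term, so that $f(x)=g(h(x))+rx$ with $r=f_0+cg_0$. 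Evaluating this at $x=w_0$ and using $h(w_0)=0$, $g(0)=0$, $f(w_0)=0$, and $w_0\ne0$ forces $r=0$, so indeed $f=g\circ h$.

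It then remains to bound $\dim W=\dim\ker f$ in terms of $\ker h$ and $\ker g$. Since $f=g\circ h$ we have $\ker h\subseteq\ker f$, and restricting $h$ to $\ker f$ has kernel $\ker h$ and image $\ker g\cap h(\F_{q^m})$, so that $\dim\ker f=\dim\ker h+\dim\bigl(\ker g\cap h(\F_{q^m})\bigr)\le\dim\ker h+\dim\ker g$. The induction hypothesis applied to the nonzero degree-$(d-1)$ polynomial $g$ gives $\dim\ker g\le d-1$, so the whole proof reduces to showing $\dim\ker h\le1$. This is the one genuinely arithmetic point, and it is precisely where $\gcd(s,m)=1$ is used: a nonzero solution of $x^{q^s}=cx$ in $\F_{q^m}$ satisfies $(x/w_0)^{q^s-1}=1$, so $x/w_0$ lies in the group of $(q^s-1)$-th roots of unity inside $\F_{q^m}^\ast$, whose order is $\gcd(q^s-1,q^m-1)=q^{\gcd(s,m)}-1=q-1$; hence $x/w_0\in\F_q^\ast$ and $\ker h=\F_q w_0$ is one-dimensional. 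Combining the three estimates gives $\dim\ker f\le1+(d-1)=d$, which closes the induction. I expect this final step — pinning down that the linear factor $h$ has a one-dimensional kernel, which is exactly the place the conclusion would fail if $\gcd(s,m)>1$ — to be the main obstacle, the remainder being the formal bookkeeping of linearized polynomials.
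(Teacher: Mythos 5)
Your proof is correct. Note that the paper does not actually prove this lemma: it cites \cite[Corollary~1]{BraByrMarMcG2007} (stated there for $q=2$) and asserts that the proof carries over to general $q$, so you have supplied a self-contained argument where the paper offers only a reference. Your argument is the standard ``right division'' step in the twisted polynomial ring $\F_{q^m}[T;\sigma]$ with $\sigma:x\mapsto x^{q^s}$, and every step checks out: the coefficient comparison $g_{j-1}-c^{q^{sj}}g_j=f_j$ for $j=d,\dots,1$ (with $g_d=0$) does determine $g$ with leading coefficient $g_{d-1}=f_d\ne 0$, the remainder $r=f_0+cg_0$ is killed by evaluating at $w_0$, the identity $\dim\ker f=\dim\ker h+\dim h(\ker f)\le \dim\ker h+\dim\ker g$ is correct rank--nullity bookkeeping since $\ker h\subseteq\ker f$ and $h(\ker f)\subseteq\ker g$, and the coprimality hypothesis enters exactly where you say it does, via $\gcd(q^s-1,q^m-1)=q^{\gcd(s,m)}-1=q-1$, which pins $\ker h$ down to the line $\F_q w_0$. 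This is essentially the same mechanism as in the cited source, so your write-up could serve as the ``obvious modifications'' the paper alludes to.
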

\par
\begin{theorem}
\label{thm:Y_code}
$Y_s(t,m,q)$ is an $(m-2t)$-code in $X(m,q)$ of size $q^{m(t+1)}$.
\end{theorem}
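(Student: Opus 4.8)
The plan is to read off the rank of $B_\lambda$ from the dimension of its radical. Since $Y_s(t,m,q)$ is additive and $B_\lambda-B_\mu=B_{\lambda-\mu}$, it suffices to show that every nonzero $B_\lambda$ has rank at least $m-2t$. The size claim then comes for free: because $t\le(m-1)/2$ we have $m-2t\ge 1$, so a nonzero $\lambda$ yields a nonzero $B_\lambda$, the $\F_q$-linear map $\lambda\mapsto B_\lambda$ is injective, and $\abs{Y_s(t,m,q)}=\abs{V^{t+1}}=q^{m(t+1)}$.

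First I would rewrite $B_\lambda(x,y)$ in the form $\Tr(y\,\phi_\lambda(x))$ for a suitable $\F_q$-linear map $\phi_\lambda$ of $V=\F_{q^m}$. This is achieved by repeatedly using $\Tr(z)=\Tr(z^{q^k})$ to pull every occurrence of $y$ out of its Frobenius twist; concretely $\Tr(\lambda_j x\,y^{q^{sj}})=\Tr(\lambda_j^{q^{-sj}}x^{q^{-sj}}y)$ (exponents read modulo $m$), so that
\[
\phi_\lambda(x)=\lambda_0 x+\sum_{j=1}^t\lambda_j x^{q^{sj}}+\sum_{j=1}^t\lambda_j^{q^{-sj}}x^{q^{-sj}}.
\]
Because the trace form $(u,v)\mapsto\Tr(uv)$ is nondegenerate, the radical of $B_\lambda$ is exactly $\ker\phi_\lambda$, and hence $\rank(B_\lambda)=m-\dim_{\F_q}\ker\phi_\lambda$. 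The task therefore reduces to bounding the number of zeros of the additive map $\phi_\lambda$ in $\F_{q^m}$.

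To match the hypothesis of Lemma~\ref{lem:zeros_lin_poly} I would clear the negative exponents by applying the Frobenius $q^{st}$, setting $f(x)=\phi_\lambda(x)^{q^{st}}=\sum_{i=0}^{2t}f_i\,x^{q^{si}}$ with $f_i=c_{i-t}^{q^{st}}$, where $c_0=\lambda_0$, $c_j=\lambda_j$ and $c_{-j}=\lambda_j^{q^{-sj}}$. Since the Frobenius is a bijection, $f$ and $\phi_\lambda$ have the same zeros. Lemma~\ref{lem:zeros_lin_poly}, applied with $d=2t$, then shows that a nonzero $f$ has at most $q^{2t}$ zeros, whence $\dim_{\F_q}\ker\phi_\lambda\le 2t$ and $\rank(B_\lambda)\ge m-2t$, as required.

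The step I expect to be the crux is verifying that $f$ (equivalently $\phi_\lambda$) is not the zero function whenever $\lambda\ne 0$, so that the lemma genuinely applies. Here the coprimality of $s$ and $m$ is essential: as $i$ ranges over $0,1,\dots,2t$ with $2t\le m-1$, the residues $si\bmod m$ are pairwise distinct, so the monomials $x^{q^{si}}$ are distinct as reduced polynomials of degree below $q^m$. Consequently $f$ vanishes identically only if all coefficients $f_i$ vanish, and unwinding $f_i=c_{i-t}^{q^{st}}$ shows this forces $\lambda_0=\lambda_1=\dots=\lambda_t=0$. This simultaneously gives the rank bound and, via the injectivity observed at the outset, the stated size.
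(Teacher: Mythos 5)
Your proof is correct and follows essentially the same route as the paper: rewrite $B_\lambda(x,y)=\Tr(yL_\lambda(x))$, identify the radical with $\ker L_\lambda$, and apply Lemma~\ref{lem:zeros_lin_poly} after a Frobenius twist to clear the negative exponents (you raise $\phi_\lambda(x)$ to the power $q^{st}$ where the paper substitutes $x\mapsto x^{q^{st}}$, an immaterial difference). Your added remarks on the nonvanishing of the coefficients and on the injectivity of $\lambda\mapsto B_\lambda$ only make explicit what the paper leaves implicit.
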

\begin{proof}
Since $Y_s(t,m,q)$ is additive, it is sufficient to show that, if $\lambda$ is nonzero, then $B_\lambda$ has rank at least $m-2t$. Pick a nonzero $\lambda\in V^{t+1}$, and note that we can write~\eqref{eqn:Beven} as
\[
B_\lambda(x,y)=\Tr(yL_\lambda(x)),
\]
where
\[
L_\lambda(x)=\lambda_0x+\sum_{j=1}^t\big(\lambda_jx^{q^{sj}}+(\lambda_jx)^{q^{-sj}}\big).
\]
Observe that $B_\lambda(x,y)=0$ for each $y\in V$ if and only if $L_\lambda(x)=0$.  From Lemma~\ref{lem:zeros_lin_poly} we find that $L_\lambda(x^{q^{st}})$ has at most $q^{2t}$ zeros in $V$. Since $x\mapsto x^{q^{st}}$ is an automorphism on $V$, the dimension of the kernel of the $\F_q$-linear mapping $L_\lambda$ is at most $2t$, so that $B_\lambda$ has rank at least $m-2t$, as required.
\end{proof}
\par
In the remainder of this section we show that $Y_s(t,m,q)$ is a $(2t+1,\eta(-1)^{t+1})$-design for even $m$.
\begin{proposition}
\label{pro:Y_design}
For even $m$, the set $Y_s(t,m,q)$ is a $(2t+1,\eta(-1)^{t+1})$-design in $X(m,q)$.
\end{proposition}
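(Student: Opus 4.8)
The plan is to show that the dual inner distribution $(a'_{k,\e})$ of $Y=Y_s(t,m,q)$ satisfies $a'_{k,1}=a'_{k,-1}=0$ for $1\le k\le 2t+1$ together with $a'_{2t+2,\eta(-1)^{t+1}}=0$. Since $Y$ is additive, Theorem~\ref{thm:dual} tells us that $\abs{Y}a'_{k,\e}=\abs{Y}\,\abs{Y^\perp\cap X_{k,\e}}$, so these conditions amount to statements about the rank-and-type distribution of the dual code $Y^\perp$. Thus the first step is to identify $Y^\perp$ explicitly. Writing $B_\lambda(x,y)=\Tr(yL_\lambda(x))$ as in the proof of Theorem~\ref{thm:Y_code}, and letting $C\in X(m,q)$ correspond to a symmetric form $C(x,y)=\Tr(yM(x))$ for some $\F_q$-linear $M$, the defining condition $\ang{B_\lambda,C}=1$ for all $\lambda$ (equivalently $\tr(B_\lambda C)=0$ for all $\lambda$, since $\chi$ is nontrivial and additive) should translate into the vanishing of the coefficients of $M$ at the exponents $q^{sj}$ for $0\le j\le t$ (and their symmetric counterparts). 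Concretely, I expect $Y^\perp$ to consist of the forms whose associated linearized polynomial $M$ has support away from the ``low'' frequencies $\{0,\pm s,\dots,\pm st\}$, and I would verify $\abs{Y^\perp}=q^{m(m+1)/2-m(t+1)}$ as a consistency check against $\abs{Y}\abs{Y^\perp}=\abs{X}$.

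The heart of the argument is then a rank bound: I must show that every nonzero $C\in Y^\perp$ has rank at least $2t+2$, which forces $a'_{k,\e}=0$ for all $k\le 2t+1$ regardless of type. This is exactly the dual of the reasoning in Theorem~\ref{thm:Y_code}: the rank of $C$ equals $m$ minus the dimension of the kernel of $M$, and since $M$ (after composing with the automorphism $x\mapsto x^{q^{st}}$) is a linearized polynomial of $q^s$-degree at most $2(\text{something})$, Lemma~\ref{lem:zeros_lin_poly} bounds its number of zeros. The subtlety is that membership in $Y^\perp$ forbids the lowest frequencies rather than bounding the top degree, so the clean application of Lemma~\ref{lem:zeros_lin_poly} is not immediate; I would multiply $M$ by a suitable power of $x$ (i.e.\ compose with a Frobenius shift) to move the support into a window of $q^s$-degree $2t+1$, from which Lemma~\ref{lem:zeros_lin_poly} gives at most $q^{2t+1}$ zeros and hence rank at least $m-(2t+1)\ge 2t+2$ is \emph{not} quite what falls out — I expect the correct count to give at most $q^{2t}$ or $q^{2t+1}$ zeros, so establishing the sharp rank lower bound of $2t+2$ is where care is required.

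The final and most delicate step is the type condition $a'_{2t+2,\eta(-1)^{t+1}}=0$, i.e.\ that no element of $Y^\perp$ has rank exactly $2t+2$ and type $\eta(-1)^{t+1}$. Here a pure rank count no longer suffices; one must control the type, which is governed by the quadratic character of the relevant determinant. I anticipate this is the main obstacle. The plan is to analyze the forms $C\in Y^\perp$ of minimal rank $2t+2$ directly: such a $C$ arises from an $M$ whose support, after a Frobenius shift, occupies a window of $q^s$-degree exactly $2t+1$, and the type of $C$ should be computable from the leading and trailing coefficients of this linearized polynomial via a Gauss-sum / discriminant evaluation. The sign $\eta(-1)^{t+1}$ must emerge from this computation, showing that the forbidden type simply cannot occur at rank $2t+2$. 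Once the rank bound of the second step and this type computation are in hand, the three conditions $a'_{k,1}=a'_{k,-1}=0$ for $k\le 2t+1$ and $a'_{2t+2,\eta(-1)^{t+1}}=0$ are verified, which by definition means $Y_s(t,m,q)$ is a $(2t+1,\eta(-1)^{t+1})$-design, completing the proof.
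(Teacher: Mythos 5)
Your overall strategy---identify $Y^\perp$ explicitly and read off the dual inner distribution via Theorem~\ref{thm:dual}---is a legitimate alternative to the paper's route, which instead proves the $(2t+1)$-design property combinatorially (via Theorem~\ref{thm:designs}, by showing every symmetric form on a $(2t+1)$-dimensional subspace extends to the same number of elements of $Y$, using Lemma~\ref{lem:bilin_mappings}) and then obtains the type condition from a moment identity. But as written your proposal has two genuine gaps. First, the rank bound: if $Y^\perp$ is, as you expect, the set of forms whose linearized polynomial is supported on the \emph{complementary} frequencies $\{s(t+1),\dots,sn\}$ with $n=m/2$, then after a Frobenius shift the support sits in a window of $q^s$-degree $m-2t-2$, Lemma~\ref{lem:zeros_lin_poly} gives at most $q^{m-2t-2}$ kernel elements, and rank $\ge 2t+2$ follows cleanly. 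You instead place the support in a window of degree $2t+1$ (that is the window for $Y$ itself, not for $Y^\perp$), obtain rank $\ge m-(2t+1)$, observe correctly that this is not $\ge 2t+2$ in general, and leave the step unresolved. This also signals that the identification of $Y^\perp$ (including the behaviour of the middle frequency $j=n$ and the translation of $\tr(B_\lambda C)=0$ into coefficient conditions) has not actually been carried out.

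Second, and more seriously, the step that makes this a $(2t+1,\eta(-1)^{t+1})$-design rather than merely a $(2t+1)$-design is not an argument but a hope: you assert that the type of a rank-$(2t+2)$ element of $Y^\perp$ "should be computable from the leading and trailing coefficients" via a "Gauss-sum / discriminant evaluation" and that the sign $\eta(-1)^{t+1}$ "must emerge." No such computation is given, and the type of a symmetric form attached to a linearized polynomial is not determined by its extreme coefficients in any straightforward way. The paper sidesteps individual types entirely: it applies Lemma~\ref{lem:four_equations} and~\eqref{eqn:ev_transform} to get
\[
\sum_{r=0}^{t+1}{n-r\brack n-t-1}\bigl(A'_r+q^{t+1}C'_r\bigr)=q^{m(t+1)}\sum_{s=0}^{n}{n-s\brack t+1}\bigl(q^{t+1}A_s+B_s\bigr),
\]
then uses the $(2n-2t)$-code property, the exact size $\abs{Y}=q^{m(t+1)}$ from Theorem~\ref{thm:Y_code}, and the already-established $(2t+1)$-design property to force $A'_{t+1}+q^{t+1}C'_{t+1}=0$, which by the definitions~\eqref{eqn:def_ABCD_dual} is precisely $a'_{2t+2,\eta(-1)^{t+1}}=0$. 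Without either this identity or a concrete type computation, your proof of the $\e$-part is missing its key idea.
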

\par
We require the following elementary result, generalising~\cite[Lemma~13]{Sch2010}.
\begin{lemma}
\label{lem:bilin_mappings}
Let $U$ be a $k$-dimensional subspace of $V=V(m,q)$. Let $\ell$ be an integer and let $s$ be a positive integer coprime to $m$. Then every bilinear form $B:U\times V\to \F_q$ can be expressed in the form
\[
B(x,y)=\Tr\Bigg(\sum_{j=0}^{k-1}a_jyx^{q^{s(j-\ell)}}\Bigg)
\]
for some uniquely determined $a_0,a_1,\dots,a_{k-1}\in V$.
\end{lemma}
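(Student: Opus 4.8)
The plan is to regard the displayed formula as defining an $\F_q$-linear map from the space of coefficient tuples to the space of bilinear forms, and to show this map is a bijection; a bijection yields both existence and uniqueness of the $a_j$ at once. Since each $x\mapsto x^{q^{s(j-\ell)}}$ is $\F_q$-linear on $\F_{q^m}$ (every integer power of the Frobenius is $\F_q$-linear, and negative powers are meaningful because the Frobenius is invertible on $\F_{q^m}$) and $\Tr$ is $\F_q$-linear, the formula does define an $\F_q$-bilinear form $B\colon U\times V\to\F_q$, and the assignment $(a_0,\dots,a_{k-1})\mapsto B$ is $\F_q$-linear in the $a_j$. Both its domain $V^k$ and its codomain, the space of $\F_q$-bilinear forms $U\times V\to\F_q$, have $\F_q$-dimension $km$, so it suffices to establish injectivity.

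To prove injectivity I would first rewrite
\[
B(x,y)=\Tr\big(y\,L(x)\big),\qquad\text{where}\quad L(x)=\sum_{j=0}^{k-1}a_j\,x^{q^{s(j-\ell)}}.
\]
Because the trace form $(u,v)\mapsto\Tr(uv)$ on $\F_{q^m}$ is nondegenerate, for each fixed $x$ the functional $y\mapsto B(x,y)$ is zero on $V$ precisely when $L(x)=0$. Hence $B$ is the zero form if and only if $L(x)=0$ for every $x\in U$, and the task reduces to showing that this forces $a_0=\dots=a_{k-1}=0$.

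The step I expect to be the crux is to bring $L$ into the shape required by Lemma~\ref{lem:zeros_lin_poly}, which is stated for genuine $q^s$-linearised polynomials without a shift. I would substitute $w=x^{q^{-s\ell}}$; as $x$ runs over $U$ the element $w$ runs over the image set $U'=\{x^{q^{-s\ell}}:x\in U\}$, which still has $q^k$ elements since $x\mapsto x^{q^{-s\ell}}$ is a bijection of $\F_{q^m}$. Under this substitution $L(x)=\sum_{j=0}^{k-1}a_j\,w^{q^{sj}}=:M(w)$, a polynomial of exactly the form appearing in Lemma~\ref{lem:zeros_lin_poly} with $d=k-1$. Since $s\ge 1$, the monomials $w^{q^{sj}}$ have pairwise distinct degrees, so if some $a_j$ were nonzero then $M$ would not be the zero polynomial; Lemma~\ref{lem:zeros_lin_poly} (whose validity rests on $\gcd(s,m)=1$) would then bound the number of zeros of $M$ in $\F_{q^m}$ by $q^{k-1}$. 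But $M$ vanishes at all $q^k$ points of $U'$, and $q^k>q^{k-1}$, a contradiction. Thus every $a_j$ is zero, the linear map is injective, and by the dimension count it is a bijection, which proves the lemma.
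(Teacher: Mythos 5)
Your proposal is correct and follows essentially the same route as the paper: a counting/dimension argument reduces the claim to injectivity, the zero form forces the linearised polynomial $L$ to vanish on $U$ via nondegeneracy of the trace form, and Lemma~\ref{lem:zeros_lin_poly} then bounds its zeros by $q^{k-1}<\abs{U}$. The only difference is that you make explicit the substitution $w=x^{q^{-s\ell}}$ needed to remove the shift $\ell$ before applying Lemma~\ref{lem:zeros_lin_poly}, a step the paper leaves as ``readily verified.''
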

\begin{proof}
Since there are $q^{mk}$ bilinear forms from $U\times V$ to $\F_q$ and the trace is linear, it is enough to show that, if $B(x,y)$ is identically zero, then $a_0=a_1=\cdots=a_{k-1}=0$. If $B(x,y)$ is identically zero, then 
\begin{equation}
\sum_{j=0}^{k-1}a_jx^{q^{s(j-\ell)}}.   \label{eqn:lin_poly_bilinear}
\end{equation}
equals zero for all $x\in U$. It is readily verified using Lemma~\ref{lem:zeros_lin_poly} that, if the $a_j$'s are not all zero, then~\eqref{eqn:lin_poly_bilinear} has at most $q^{k-1}$ zeros in $V$. This completes the proof since $\abs{U}=q^k$.
\end{proof}
\par
We now prove Proposition~\ref{pro:Y_design}.
\begin{proof}[Proof of Proposition~\ref{pro:Y_design}]
We first show that $Y=Y_s(t,m,q)$ is a $(2t+1)$-design. Let $U$ be a $(2t+1)$-dimensional subspace of $V=V(m,q)$ and let $A$ be a symmetric bilinear form on $U$. In view of Theorem~\ref{thm:designs}, we have to show that the number of $B\in Y$ that are extensions of $A$ is a constant independent of the choice of $U$ and $A$.
\par
Let $\mu=(\mu_0,\mu_1,\dots,\mu_{2t})$ be an element of $V^{2t+1}$ and consider the bilinear form $D_\mu:V\times V\to \F_q$ given by 
\[
D_\mu(x,y)=\Tr\Bigg(\sum_{j=0}^{2t}\mu_jyx^{q^{s(j-t)}}\Bigg).
\]
From Lemma~\ref{lem:bilin_mappings} we conclude that, as $\mu$ runs through $V^{2t+1}$, the restriction of $D_\mu$ onto $U\times V$ runs through all bilinear forms on $U\times V$ and hence the restriction of $D_\mu$ onto $U\times U$ equals each bilinear form on $U\times U$ a constant number of times. Now define the symmetric bilinear form $C_\mu:V\times V\to K$ by
\[
C_\mu(x,y)=D_\mu(x,y)+D_\mu(y,x).
\]
Then, as $\mu$ runs through $V^{2t+1}$, the restriction of $C$ onto $U\times U$ equals each symmetric bilinear form on $U\times U$ a constant number of times. An elementary manipulation gives
\begin{equation}
C_\mu(x,y)=\Tr\bigg(\sigma_0xy+\sum_{j=1}^t\sigma_j\big(x^{q^{sj}}y+xy^{q^{sj}}\big)\bigg),   \label{eqn:sym_bilinear_from_D}
\end{equation}
where
\[
\sigma_j=\mu_{t+j}+\mu_{t-j}^{q^{sj}} \quad\text{for each $j\in\{0,1,\dots,t\}$}.
\]
When $\mu$ runs through $V^{2t+1}$, the tuple $(\sigma_0,\sigma_1,\dots,\sigma_t)$ ranges over $V^{t+1}$, where each tuple occurs $q^t$ times. Then, by comparing~\eqref{eqn:sym_bilinear_from_D} and~\eqref{eqn:Beven}, we conclude that the number of $B\in Y$ that are extensions of $A$ is a constant independent of the choice of~$U$ and~$A$. This shows that $Y$ is a $(2t+1)$-design.
\par 
Now let $(a_{i,\t})$ and $(a'_{k,\e})$ be the inner distribution and the dual inner distribution of $Y$, respectively. It remains to show that
\begin{equation}
a'_{2t+1,\e}=0\quad\text{for $\e=\eta(-1)^{t+1}$}.   \label{eqn:a2t1_zero}
\end{equation}
Let $A_s,B_s$ and $A'_r,C'_r$ be as defined in~\eqref{eqn:def_ABCD} and~\eqref{eqn:def_ABCD_dual}, respectively, and write $n=m/2$. From Lemma~\ref{lem:four_equations} and~\eqref{eqn:ev_transform} we find that
\begin{equation}
\sum_{r=0}^{t+1}{n-r\brack n-t-1} \big(A'_r+q^{t+1}\; C'_r\big)=q^{m(t+1)}\sum_{s=0}^{n}{n-s\brack t+1}\big(q^{t+1}\,A_s+B_s\big).   \label{eqn:eqn:AC}
\end{equation}
Since $Y$ is a $(2n-2t)$-code by Theorem~\ref{thm:Y_code} and $A_0=B_0=1$, the right-hand side of~\eqref{eqn:eqn:AC} reduces to
\[
q^{m(t+1)}{n\brack t+1}(q^{t+1}+1).
\]
Since $A'_0=C'_0=\abs{Y}$ and $Y$ is a $(2t+1)$-design, the left-hand side of~\eqref{eqn:eqn:AC} is
\[
\abs{Y}\,{n\brack t+1} \big(q^{t+1}+1\big)+A'_{t+1}+q^{t+1}C'_{t+1}.
\]
By Theorem~\ref{thm:Y_code}, we have $\abs{Y}=q^{m(t+1)}$ and therefore
\[
A'_{t+1}+q^{t+1}C'_{t+1}=0.
\]
Since $a'_{2t+1,1}=a'_{2t+1,-1}=0$, this is equivalent to~\eqref{eqn:a2t1_zero}, which completes the proof.
\end{proof}


\section{Applications to classical coding theory}
\label{sec:coding}

We begin with recalling some basic facts from coding theory (see~\cite{MacSlo1977} for more background). A \emph{code} over $\F_q$ of \emph{length} $n$ is a subset of $\F_q^n$; such a code is \emph{additive} if it is a subgroup of $(\F_q^n,+)$. The (Hamming) \emph{weight} of $c\in\F_q^n$, denoted by $\wt(c)$, is the number of nonzero entries in $c$. This weight induces a distance on $\F_q^n$ and the smallest distance between two distinct elements of a code $\C$ is called the \emph{minimum distance} of $\C$. Given a code $\C$, the polynomials
\[
\alpha(z)=\sum_{c\in\C}z^{\wt(c)}
\]
and
\[
\beta(z)=\frac{1}{\abs{\C}}\sum_{b,c\in\C}z^{\wt(c-b)}
\]
are called the \emph{weight enumerator} and the \emph{distance enumerator} of $\C$, respectively. Note that, if~$\C$ is additive, then its weight enumerator coincides with its distance enumerator. \par
We are going to study codes that are derived from quadratic forms. Consider $V=V(m,q)$ and write $V^*=V-\{0\}$. Every symmetric bilinear form $B$ in $X(m,q)$ gives rise to a quadratic form $Q:V\to\F_q$ via $Q(x)=B(x,x)$. Conversely, every quadratic form $Q:V\to\F_q$ gives a bilinear form $B$ in $X(m,q)$ via
\[
B(x,y)=\frac{1}{2}\big(Q(x+y)-Q(x)-Q(y)\big).
\]
Accordingly, the \emph{rank} and \emph{type} of a quadratic form is defined to be the rank and type of the corresponding bilinear form, respectively. Since for every quadratic form $Q:V\to\F_q$ we have $Q(0)=0$, we can without loss of generality restrict $Q$ to $V^*$. We shall identify functions from $V^*$ to $\F_q$ as vectors in~$\F_q^{V^*}$.
\par
It is well known (see~\cite{FenLuo2008}, for example) that the weight of a quadratic form $Q$ on $V$ is intimately related to an exponential sum of type
\[
\sum_{x\in V}\chi(Q(x)),
\]
where $\chi$ is a character of $(\F_q,+)$. This sum depends only on the character $\chi$, and the rank and type of the quadratic form $Q$. Thus the analysis of codes derived from sets of quadratic forms naturally leads to the study of the inner distributions of the underlying subsets of $X(m,q)$.
\par
The distance enumerators of codes derived from quadratic forms have been studied extensively in the last years (see~\cite{FenLuo2008}, \cite{LuoFen2008}, \cite{LiuLuo2014}, \cite{LiuLuo2014a}, \cite{ZenHuJiaYueCao2010}, \cite{ZheWanZenHu2013}, \cite{LuoTanWan2010}, \cite{LiuYanLiu2014} for example). These contributions are based on an ad hoc analysis of the underlying quadratic or bilinear forms and their corresponding character sums. This method becomes quickly unfeasible for larger codes; the current results apply only for codes of size $\abs{V}^s$, where $s\le 4$. In this section we offer a unified treatment that is much simpler and yet more general.
\par
Let $Y$ be a subset of $X=X(m,q)$. We shall associate with $Y$ two codes of length $q^m-1$. We define the code $\C_1(Y)$ to be the set of vectors in $\F_q^{V^*}$ containing the quadratic forms corresponding to $Y$. Denote by $\L=\L(m,q)$ the set of vectors in $\F_q^{V^*}$ corresponding to the linear functions from $V$ to $\F_q$. This is an additive code of length $q^m-1$ and cardinality $q^m$. Its lengthened version is known as the \emph{generalised first-order Reed-Muller code}~(see~\cite{DelGoeMac1970}, for example). We define the code $\C_2(Y)$ to be the union of cosets of $\L$ with coset representatives coming from $\C_1(Y)$, that is
\begin{equation}
\C_2(Y)=\{a+b:a\in \L,\,b\in\C_1(Y)\}.   \label{eqn:def_C2}
\end{equation}
We note that, if $Y$ is additive, then $\C_1(Y)$ and $\C_2(Y)$ are additive codes.
\par
In order to find the distance enumerators of $\C_1(Y)$ and $\C_2(Y)$, we require the following standard result on quadratic forms (see~\cite[Section~6.2]{LidNie1997}, for example). Note that we continue using $\eta$ to denote the quadratic character of $\F_q$ and extend $\eta$ in the standard way by putting $\eta(0)=0$.
\begin{lemma}
\label{lem:quad_forms_sol}
Let $Q:V(m,q)\to\F_q$ be a quadratic form of rank $i$ and type~$\t$. Let $N(h)$ be the number of solutions $x\in V(m,q)$ of $Q(x)=h$. Then
\[
N(h)=
\begin{cases}
q^{m-1}+\t\,\eta(-1)^{(i-1)/2}\,\eta(h)\,q^{m-(i+1)/2} & \text{for odd $i$}\\[1ex]
q^{m-1}+\t\,\eta(-1)^{i/2}\,v(h)\,q^{m-i/2-1}          & \text{for even $i$},
\end{cases}
\] 
where $v(x)=-1$ for every nonzero $x\in\F_q$ and $v(0)=q-1$.
\end{lemma}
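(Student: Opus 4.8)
The plan is to reduce to a diagonal normal form and then evaluate the count by a standard orthogonality argument built on the quadratic Gauss sum $\gamma_q$ of~\eqref{eqn:def_gauss_sum}. First I would invoke the classification of symmetric bilinear forms recalled in Section~\ref{sec:scheme} (\cite[Theorem~11.5]{Wan2003}) to choose a basis of $V=V(m,q)$ in which the form underlying $Q$ is diagonal with diagonal $[z,1,\dots,1,0,\dots,0]$, so that $Q(x)=zx_1^2+x_2^2+\cdots+x_i^2$ with $i=\rank(Q)$ and $\eta(z)=\t$ the type. The last $m-i$ coordinates do not occur, so they will contribute only a factor $q^{m-i}$, and the problem reduces to counting solutions of the nondegenerate form $zx_1^2+\cdots+x_i^2=h$, whose discriminant is $z$ and hence has $\eta$-value exactly $\t$.

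Next I would expand $N(h)$ by additive-character orthogonality,
\[
N(h)=\frac{1}{q}\sum_{t\in\F_q}\sum_{x\in V}\chi\big(t(Q(x)-h)\big),
\]
whose $t=0$ term is $q^{m-1}$. For $t\ne 0$ the inner sum factors over coordinates: the $m-i$ absent variables give $q^{m-i}$, while each active variable contributes the one-dimensional sum $\sum_{x_k}\chi(ta_kx_k^2)=\eta(ta_k)\gamma_q$ (the standard evaluation, obtained by counting the square roots of $y$ as $1+\eta(y)$). Multiplying these gives $\sum_{x}\chi(tQ(x))=q^{m-i}\gamma_q^{i}\,\eta(t)^i\,\eta(z)$ for $t\ne0$, whence
\[
N(h)=q^{m-1}+\frac{q^{m-i}}{q}\,\gamma_q^{i}\,\t\sum_{t\ne 0}\eta(t)^i\,\chi(-th).
\]

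It then remains to evaluate the residual sum over $t\ne0$ and simplify the powers of $\gamma_q$, splitting on the parity of $i$ and using $\gamma_q^2=\eta(-1)q$. For even $i$ one has $\eta(t)^i=1$, so $\sum_{t\ne0}\chi(-th)=v(h)$ and $\gamma_q^{i}=\eta(-1)^{i/2}q^{i/2}$; collecting exponents ($m-i+i/2-1=m-i/2-1$) yields the even-$i$ formula. For odd $i$ one has $\eta(t)^i=\eta(t)$, and the substitution $w=-th$ evaluates $\sum_{t\ne0}\eta(t)\chi(-th)=\eta(-1)\eta(h)\gamma_q$ uniformly in $h$ (the case $h=0$ holding since $\sum_{t\ne0}\eta(t)=0=\eta(0)$). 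Combined with $\gamma_q^{i}=\eta(-1)^{(i-1)/2}q^{(i-1)/2}\gamma_q$ and $\gamma_q^2=\eta(-1)q$, the factors of $\eta(-1)$ telescope to $\eta(-1)^{(i-1)/2}$ and the exponent becomes $m-i+(i-1)/2=m-(i+1)/2$, giving the odd-$i$ formula.

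I expect no genuine difficulty: the statement is a repackaging of the classical solution counts for quadratic forms, and one could instead simply quote~\cite[Section~6.2]{LidNie1997} after the diagonalization step. The only point requiring care is the bookkeeping of the quadratic character — verifying that the type $\t=\eta(z)$ equals the $\eta$-value of the discriminant of the nondegenerate part (immediate, since the remaining diagonal entries are $1$) and, in the odd case, correctly tracking the accumulated powers of $\eta(-1)$ coming from $\gamma_q^2=\eta(-1)q$ together with the evaluation of $\sum_{t\ne0}\eta(t)\chi(-th)$.
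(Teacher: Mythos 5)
Your proof is correct. Note, however, that the paper does not prove this lemma at all: it is quoted as a standard fact with a pointer to \cite[Section~6.2]{LidNie1997} (essentially Theorems~6.26 and~6.27 there), so there is no in-paper argument to compare against. What you have written is the standard textbook derivation of that cited result --- diagonalise via \cite[Theorem~11.5]{Wan2003}, expand $N(h)$ by additive-character orthogonality, evaluate each one-variable sum as $\eta(ta_k)\gamma_q$, and reduce the powers of $\gamma_q$ with $\gamma_q^2=\eta(-1)q$ --- and all the bookkeeping checks out: in the even case the exponent $m-i+i/2-1=m-i/2-1$ and the factor $v(h)$ are right, and in the odd case the accumulated power $\eta(-1)^{1+(i+1)/2}=\eta(-1)^{(i-1)/2}$ and the exponent $m-i-1+(i+1)/2=m-(i+1)/2$ both match the statement, including the degenerate case $h=0$ where $\sum_{t\ne 0}\eta(t)=0$ is consistent with $\eta(0)=0$.
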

\par
We immediately obtain the following result from Lemma~\ref{lem:quad_forms_sol} applied with $h=0$.
\begin{theorem}
\label{thm:dist_C1}
Let $Y$ be a subset of $X(m,q)$ with inner distribution $(a_{i,\t})$. Then the distance enumerator of $\C_1(Y)$ is $\sum_{i,\t}a_{i,\t}z^{w_{i,\t}}$, where
\[
w_{i,\t}=\begin{cases}
q^{m-1}(q-1)                           & \text{for odd $i$},\\[1ex]
(q^{m-1}-\t\eta(-1)^{i/2}\,q^{m-i/2-1})(q-1) & \text{for even $i$}.
\end{cases}
\]
\end{theorem}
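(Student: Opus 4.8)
The plan is to evaluate the weight of each codeword of $\C_1(Y)$ by a direct application of Lemma~\ref{lem:quad_forms_sol} with $h=0$, and then to reorganise the defining double sum of the distance enumerator according to the relations $R_{i,\t}$. Since $q$ is odd, the polarisation identity recovers a symmetric bilinear form from its associated quadratic form, so the map $B\mapsto Q_B$ (where $Q_B(x)=B(x,x)$) is a bijection from $X(m,q)$ onto the quadratic forms on $V$; in particular it restricts to a bijection $Y\to\C_1(Y)$, whence $\abs{\C_1(Y)}=\abs{Y}$. Writing the distance enumerator as a sum over ordered pairs of codewords and indexing these by $A,B\in Y$, the difference of the codewords attached to $A$ and $B$ is the restriction to $V^*$ of $Q_A-Q_B=Q_{A-B}$, using that $Q$ is additive in the underlying bilinear form.

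Next I would compute the Hamming weight of the codeword of a quadratic form $Q$ of rank $i$ and type $\t$. Because $Q(0)=0$, the number of $x\in V^*$ with $Q(x)=0$ equals $N(0)-1$, where $N(0)$ is the quantity of Lemma~\ref{lem:quad_forms_sol} at $h=0$; hence the weight is $(q^m-1)-(N(0)-1)=q^m-N(0)$. For odd $i$ the term carrying $\eta(0)=0$ vanishes, so $N(0)=q^{m-1}$ and the weight is $q^{m-1}(q-1)$, independent of $\t$. For even $i$ one substitutes $v(0)=q-1$ to get $N(0)=q^{m-1}+\t\,\eta(-1)^{i/2}(q-1)q^{m-i/2-1}$, and factoring out $q-1$ gives the weight $(q^{m-1}-\t\,\eta(-1)^{i/2}q^{m-i/2-1})(q-1)$. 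In both cases this is precisely $w_{i,\t}$, and since $Q_{A-B}$ and $Q_{B-A}$ have the same support, the global sign introduced by the ordering of the pair is irrelevant.

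Finally I would assemble the enumerator. The weight attached to a pair $(A,B)$ depends only on the rank $i$ and type $\t$ of $A-B$, and equals $w_{i,\t}$ by the previous step; grouping the ordered pairs $(A,B)\in Y\times Y$ by the relation $R_{i,\t}$ containing them, the number of pairs in each class is $\abs{(Y\times Y)\cap R_{i,\t}}=\abs{Y}\,a_{i,\t}$ by the definition of the inner distribution. The factor $1/\abs{Y}$ then cancels and the claimed expression $\sum_{i,\t}a_{i,\t}z^{w_{i,\t}}$ follows. The argument is essentially routine, as signalled in the text; the only points requiring care are the correction by $1$ arising from $Q(0)=0$ and the separate handling of odd and even $i$ through $\eta(0)=0$ and $v(0)=q-1$.
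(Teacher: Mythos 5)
Your proposal is correct and follows the same route as the paper, which simply derives the theorem from Lemma~\ref{lem:quad_forms_sol} applied with $h=0$; you have merely spelled out the routine bookkeeping (the polarisation bijection $Y\to\C_1(Y)$, the identity $Q_A-Q_B=Q_{A-B}$, the subtraction of the point $x=0$, and the regrouping of ordered pairs by the relations $R_{i,\t}$) that the paper leaves implicit. All the computations, in particular $N(0)=q^{m-1}$ for odd $i$ via $\eta(0)=0$ and the factorisation of $q^m-N(0)$ for even $i$ via $v(0)=q-1$, check out.
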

\par
In order to obtain a corresponding result for $\C_2(Y)$, we define the polynomial
\[
T_{i,\t}(z)=n_u z^u+n_v z^v +n_w z^w,
\]
where
\begin{align*}
u&=q^{m-1}(q-1)-q^{m-(i+1)/2}  & n_u&=\tfrac12(q^{i-1}+q^{(i-1)/2})(q-1) \\
v&=q^{m-1}(q-1)                & n_v&=q^m-q^{i-1}(q-1)\\
w&=q^{m-1}(q-1)+q^{m-(i+1)/2}  & n_w&=\tfrac12(q^{i-1}-q^{(i-1)/2})(q-1)
\intertext{for odd $i$ and}
u&=(q^{m-1}-\tau\,\eta(-1)^{i/2}\, q^{m-i/2-1})(q-1)  & n_u&=q^{i-1}+\tau\,\eta(-1)^{i/2}\, q^{i/2-1}(q-1) \\
v&=q^{m-1}(q-1)                                       & n_v&=q^m-q^i\\
w&=q^{m-1}(q-1)+\tau\,\eta(-1)^{i/2}\, q^{m-i/2-1}    & n_w&=(q^{i-1}-\tau\,\eta(-1)^{i/2}\, q^{i/2-1})(q-1)
\end{align*}
for even $i$. In fact, $T_{i,\t}(z)$ is a binomial for $i=0$ or $1$, otherwise it is a trinomial. The reason for this definition of $T_{i,\t}(z)$ becomes clear from the following lemma.
\begin{lemma}
\label{lem:coset_weight_distribution}
Let $Q:V(m,q)\to\F_q$ be a quadratic form of rank $i$ and type~$\t$. Then $T_{i,\t}(z)$ is the weight enumerator of the coset $Q+\L$.
\end{lemma}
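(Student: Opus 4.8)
The plan is to fix a quadratic form $Q$ of rank $i$ and type $\t$, compute the Hamming weight of each coset element $Q+\ell$, and record how often each weight occurs as $\ell$ ranges over the $q^m$ linear functions in $\L$. Since the coordinates of a codeword are indexed by $V^*=V-\{0\}$ and $(Q+\ell)(0)=0$ for every $\ell$, the weight of $Q+\ell$ equals $q^m-Z(\ell)$, where $Z(\ell)=\abs{\{x\in V:Q(x)+\ell(x)=0\}}$ counts the zeros of $Q+\ell$ over all of $V$. Thus it suffices to determine the distribution of $Z(\ell)$.

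Let $B$ be the symmetric bilinear form associated with $Q$ and let $V_0$ be its radical, a subspace of dimension $m-i$. The decisive dichotomy is whether $\ell$ vanishes on $V_0$. If $\ell|_{V_0}\ne 0$, I would choose $w_0\in V_0$ with $\ell(w_0)\ne 0$; since $B(x,w_0)=0$ and $Q(w_0)=0$ for all $x$, a direct expansion gives $(Q+\ell)(x+sw_0)=(Q+\ell)(x)+s\,\ell(w_0)$, so $Q+\ell$ assumes every value of $\F_q$ equally often along each line in the direction $w_0$. Hence $Z(\ell)=q^{m-1}$ and the weight is $v=q^{m-1}(q-1)$. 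The functionals vanishing on $V_0$ form the annihilator of $V_0$, of dimension $i$, so there are exactly $q^m-q^i$ linear functions with $\ell|_{V_0}\ne 0$.

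If instead $\ell|_{V_0}=0$, then $\ell$ lies in the image of the map $p\mapsto B(p,\cdot)$, so $\ell=B(p,\cdot)$ for some $p$ unique modulo $V_0$. Completing the square yields $(Q+\ell)(x)=Q(x+p/2)-Q(p/2)$, whence $Z(\ell)=N(c_0)$ with $c_0=Q(p/2)=Q(p)/4$ and $N$ as in Lemma~\ref{lem:quad_forms_sol}. The weight is therefore $q^m-N(c_0)$, which by Lemma~\ref{lem:quad_forms_sol} equals, for odd $i$, one of $u,v,w$ according as $\t\,\eta(-1)^{(i-1)/2}\eta(c_0)$ is $+1$, $0$, or $-1$, and for even $i$ equals $u$ when $c_0=0$ and $w$ when $c_0\ne 0$. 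As $p$ ranges over $V/V_0\cong\F_q^i$, on which $Q$ induces a nondegenerate form of rank $i$ and type $\t$, the relevant data (namely $\eta(c_0)=\eta(Q(p))$, using $\eta(1/4)=1$, or whether $c_0=0$) is distributed exactly as a second application of Lemma~\ref{lem:quad_forms_sol} in dimension $i$ prescribes. Counting the $p$ in each class then supplies the multiplicities $n_u$, $n_w$, and the contribution of this case to $n_v$ (which for odd $i$ adjoins the $q^{i-1}$ solutions with $c_0=0$ to the $q^m-q^i$ functions of the first case, and for even $i$ contributes nothing to $n_v$).

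Finally I would treat the parities $i$ odd and $i$ even separately to recover the two display blocks defining $T_{i,\t}(z)$, and observe that for $i\in\{0,1\}$ one of $n_u,n_w$ vanishes, so the trinomial degenerates to a binomial as claimed. The only real work is bookkeeping: one must reconcile the sign $\t\,\eta(-1)^{(i\pm 1)/2}$ arising from Lemma~\ref{lem:quad_forms_sol} applied to $Q+\ell$ with the square/nonsquare (or zero) count of $c_0$ arising from the same lemma applied to $Q$ on the quotient $V/V_0$, and to verify in both parities that these assemble into precisely the coefficients $n_u$, $n_v$, $n_w$.
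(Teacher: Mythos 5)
Your proof is correct and follows essentially the same route as the paper's: split the $q^m$ linear functionals according to whether they vanish on the radical of $Q$ (your $q^m-q^i$ versus $q^i$ dichotomy is exactly the paper's case distinction on whether some $b_j$ with $j>i$ is nonzero after diagonalising $Q$), complete the square in the nondegenerate part, and apply Lemma~\ref{lem:quad_forms_sol} twice --- once to distribute the constants $c_0$ and once to count zeros for each $c_0$. The only difference is presentational: you argue invariantly via the radical and the induced form on $V/V_0$, while the paper works in diagonalising coordinates; the bookkeeping you defer at the end is precisely the computation the paper carries out explicitly.
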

\begin{proof}
By choosing an appropriate basis for $V(m,q)$ over $\F_q$, we can write~\cite[Theorem~6.21]{LidNie1997}
\[
Q(x)=\sum_{j=1}^ia_jx_j^2
\]
for some nonzero $a_1,\dots,a_i\in\F_q$ satisfying $\eta(a_1\cdots a_i)=\tau$, where $x_1,\dots,x_m$ are the coordinates of $x$ relative to the basis chosen. Thus, writing
\[
f(x_1,\dots,x_m)=\sum_{j=1}^ia_jx_j^2+\sum_{j=1}^m b_jx_j
\]
and noting that $f(0,\dots,0)=0$, we have to show that
\[
T_{i,\t}(z)=\sum_{b_1,\dots,b_m\in\F_q}z^{\wt(f)}.
\]
If one of $b_{i+1},\dots,b_m$ is nonzero, which occurs in $q^m-q^i$ cases, then it is readily verified that $\wt(f)$ equals $q^m-q^{m-1}$. It therefore remains to consider the cases that $b_{i+1},\dots,b_m$ are all zero. 
\par
We introduce the new variable $y_j$ by setting $x_j=y_j-b_j/(2a_j)$ for each $j\in\{1,\dots,m\}$. Then the number of solutions $(x_1,\dots,x_m)\in\F_q^m$ of $f(x_1,\dots,x_m)=0$ equals the number of solutions $(y_1,\dots,y_m)\in\F_q^m$ of
\begin{equation}
\sum_{j=1}^ia_jy_j^2=h,   \label{eqn:diag_eqn_h}
\end{equation}
where
\begin{equation}
h=\sum_{j=1}^i\frac{1}{4a_j}\,b_j^2.   \label{eqn:h}
\end{equation}
Notice that $h$ is a quadratic form in $b_1,\dots,b_i$ of rank $i$ and type $\tau$. We treat the left-hand side of~\eqref{eqn:diag_eqn_h} as a quadratic form in $y_1,\dots,y_m$ (again of rank $i$ and type~$\tau$). We now distinguish the cases that $i$ is odd and $i$ is even, making multiple use of Lemma~\ref{lem:quad_forms_sol} without explicit reference.
\par
First assume that $i$ is odd. There are $q^{i-1}$ tuples $(b_1,\dots,b_i)$ such that $h=0$, in which case the number of solutions $(y_1,\dots,y_m)\in\F_q^m$ of~\eqref{eqn:diag_eqn_h} is $q^{m-1}$. Combining this with the contributions arising when at least one of $b_{i+1},\dots,b_m$ is nonzero, we find that the coefficient of term of degree $q^m-q^{m-1}$ in $T_{i,\t}(z)$ equals
\[
q^m-q^i+q^{i-1},
\]
as required. The number of tuples $(b_1,\dots,b_i)$ satisfying~\eqref{eqn:h} for a fixed nonzero $h$ equals
\[
q^{i-1}+\tau\,\eta(-1)^{(i-1)/2}\,\eta(h)\, q^{(i-1)/2}.
\]
Since $\F_q$ contains $(q-1)/2$ squares and the same number of nonsquares, we find that, for each $\nu\in\{-1,1\}$, the number of tuples $(b_1,\dots,b_i)$ satisfying~\eqref{eqn:h} for some $h$ with $\eta(h)=\nu$ is
\[
\tfrac12(q-1)(q^{i-1}+\nu\,\tau\,\eta(-1)^{(i-1)/2}\, q^{(i-1)/2}).
\]
The corresponding number of solutions to~\eqref{eqn:diag_eqn_h} is 
\[
q^{m-1}+\nu\,\tau\,\eta(-1)^{(i-1)/2}\, q^{m-(i+1)/2}.
\]
Therefore, the coefficient of the term of degree 
\[
q^m-(q^{m-1}\pm q^{m-(i+1)/2})
\]
in $T_{i,\t}(z)$ equals
\[
\tfrac12(q-1)(q^{i-1}\pm q^{(i-1)/2}),
\]
as required.
\par
Now assume that $i$ is even. We treat this case in less detail. The number of $(b_1,\dots,b_i)\in\F_q^m$ satisfying~\eqref{eqn:h} for $h=0$ equals
\[
q^{i-1}+\tau\,\eta(-1)^{i/2}(q-1)q^{i/2-1}
\]
and the corresponding number of solutions to~\eqref{eqn:diag_eqn_h} is 
\[
q^{m-1}+\tau\,\eta(-1)^{i/2}(q-1)q^{m-i/2-1}.
\]
The number of $(b_1,\dots,b_i)\in\F_q^m$ satisfying~\eqref{eqn:h} for some nonzero $h\in\F_q$ equals
\[
(q-1)(q^{i-1}-\tau\,\eta(-1)^{i/2}\,q^{i/2-1})
\]
and the corresponding number of solutions to~\eqref{eqn:diag_eqn_h} is 
\[
q^{m-1}-\tau\,\eta(-1)^{i/2}\,q^{m-i/2-1}.
\]
From this we readily verify the statement of the lemma for even $i$.
\end{proof}
\par
We are now ready to prove the following theorem for $\C_2(Y)$.
\begin{theorem}
\label{thm:dist_C2}
Let $Y$ be a subset of $X(m,q)$ with inner distribution $(a_{i,\t})$. Then the distance enumerator of $\C_2(Y)$ is $\sum_{i,\t}a_{i,\t}T_{i,\t}(z)$. 
\end{theorem}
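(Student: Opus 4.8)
The goal is to compute the distance enumerator of $\C_2(Y)$, which by definition records the weights of differences $c-b$ as $b,c$ range over $\C_2(Y)$. The plan is to reduce this to a statement about cosets of $\L$ and then invoke Lemma~\ref{lem:coset_weight_distribution}, which we have already proved. The crucial structural observation is that $\C_2(Y)$ is, by its definition~\eqref{eqn:def_C2}, a union of cosets of $\L$ whose representatives are the quadratic forms corresponding to the elements of $Y$. The difference of two codewords of $\C_2(Y)$ therefore lies in a coset $(Q_A-Q_B)+\L$, where $Q_A,Q_B$ are the quadratic forms corresponding to forms $A,B\in Y$; and $Q_A-Q_B$ is precisely the quadratic form corresponding to $A-B$. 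Thus the difference set is governed by the rank and type of $A-B$, which is exactly what the inner distribution $(a_{i,\t})$ records.

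First I would make the coset reduction precise. Writing a general codeword of $\C_2(Y)$ as $a+Q_A$ with $a\in\L$ and $A\in Y$, the difference of two codewords $(a+Q_A)-(a'+Q_B)$ equals $(a-a')+(Q_A-Q_B)$. As $a,a'$ range over $\L$, the combination $a-a'$ ranges over $\L$ (since $\L$ is additive), each value being attained exactly $\abs{\L}=q^m$ times. Hence, for a fixed ordered pair $(A,B)\in Y\times Y$, the contribution to $\sum_{b,c}z^{\wt(c-b)}$ is $q^m\sum_{e\in\L}z^{\wt(e+(Q_A-Q_B))}$, which is $q^m$ times the weight enumerator of the coset $(Q_A-Q_B)+\L$.

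Next I would identify that coset weight enumerator. The quadratic form $Q_A-Q_B$ corresponds to the bilinear form $A-B$, so it has the same rank $i$ and type $\t$ as $A-B$; by Lemma~\ref{lem:coset_weight_distribution} its coset weight enumerator is exactly $T_{i,\t}(z)$, depending only on $(i,\t)$. Summing over all ordered pairs $(A,B)\in Y\times Y$ and dividing by $\abs{\C_2(Y)}$ (the normalising factor in the definition of the distance enumerator), I would group the pairs according to the rank $i$ and type $\t$ of $A-B$. The number of such pairs is $\abs{Y}\,a_{i,\t}$ by the definition of the inner distribution, and one checks $\abs{\C_2(Y)}=q^m\abs{Y}$ (each coset representative giving a distinct coset, as $Y$ being a code means distinct elements differ in rank at least $d\ge 1$, hence give distinct cosets). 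The factor $q^m$ from the inner sum and the factor $q^m$ in $\abs{\C_2(Y)}$ cancel with $\abs{Y}$, leaving $\sum_{i,\t}a_{i,\t}T_{i,\t}(z)$, as claimed.

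The main obstacle to watch is the bookkeeping of multiplicities so that all the factors of $q^m$ and $\abs{Y}$ cancel correctly; in particular one must confirm that distinct cosets of $\L$ are indexed faithfully by $Y$, i.e.\ that no two elements of $Y$ yield quadratic forms lying in the same coset of $\L$. This holds because two quadratic forms differ by a linear function precisely when the associated bilinear forms are equal, and distinct $A,B\in Y$ have $A-B$ of rank at least $d\ge 1$, hence nonzero. Everything else is a direct consequence of the coset structure of $\C_2(Y)$ together with the already-established Lemma~\ref{lem:coset_weight_distribution}.
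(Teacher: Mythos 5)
Your proof is correct and follows essentially the same route as the paper's: both reduce the distance enumerator of $\C_2(Y)$ to an average over pairs of the coset weight enumerators $\sum_{e\in\L}z^{\wt(e+Q_A-Q_B)}$ and then invoke Lemma~\ref{lem:coset_weight_distribution}. Your write-up is in fact slightly more careful about the multiplicity bookkeeping (distinctness of the cosets and the value of $\abs{\C_2(Y)}$), which the paper leaves implicit.
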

\begin{proof}
By the definition~\eqref{eqn:def_C2} of $\C_2(Y)$, the distance enumerator of $\C_2(Y)$ equals
\[
\frac{1}{\abs{Y}}\sum_{b,c\in\C_1(Y)}\;\sum_{a\in\L}z^{\wt(a+c-b)},
\]
using that $\L$ is additive. The inner sum is just the weight enumerator of the coset $c-b+\L$ and so the theorem follows from Lemma~\ref{lem:coset_weight_distribution}.
\end{proof}
\par
Now consider the codes obtained from the sets $Y=Y_s(t,m,q)$ constructed in Section~\ref{sec:constructions} and their punctured versions $Y^*=Y_s(t,m+1,q)^*$. Since Theorem~\ref{thm:inner_dist_odd} and Proposition~\ref{pro:inner_dist_even} give the inner distributions of $Y$ and $Y^*$, Theorems~\ref{thm:dist_C1} and~\ref{thm:dist_C2} give the distance enumerators of the derived code families. In particular, the minimum distance of $\C_1(Y)$ is $$(q^{m-1}-q^{\floor{m/2}+t-1})(q-1)$$ and that of $\C_2(Y)$ is
\[
\begin{cases}
q^{m-1}(q-1)-q^{(m-1)/2+t}  & \text{for odd $m$}\\
(q^{m-1}-q^{m/2+t-1})(q-1)  & \text{for even $m$}.
\end{cases}
\] 
The minimum distance of $\C_1(Y^*)$ is $$(q^{m-1}-q^{\floor{(m-1)/2}+t})(q-1)$$ and that of $\C_2(Y^*)$ is
\[
\begin{cases}
(q^{m-1}-q^{(m-1)/2+t})(q-1)  & \text{for odd $m$}\\
q^{m-1}(q-1)-q^{m/2+t}  & \text{for even $m$}.
\end{cases}
\]
\par
The codes obtained from $Y=Y_s(t,m,q)$ have received considerable attention in the literature and are discussed next in more detail. Consider the bilinear form $B_\lambda$ given in~\eqref{eqn:Beven}. The corresponding quadratic form is given by
\begin{gather*}
Q_\lambda:\F_{q^m}\to\F_q\\
Q_\lambda(x)=\Tr_m\bigg(\lambda_0x^2+2\sum_{j=1}^t\lambda_jx^{q^{sj}+1}\bigg).
\end{gather*}
The codewords of $\C_1(Y)$ can be written explicitly as
\[
\big(Q_\lambda(1),\,Q_\lambda(\theta),\,Q_\lambda(\theta^2),\dots,\,Q_\lambda(\theta^{q^m-2})\big),
\]
where $\theta$ is a primitive element of $\F_{q^m}$. Thus $\C_1(Y)$ is a cyclic code, whose zeros are 
\[
\theta^{-2},\theta^{-(q^s+1)},\theta^{-(q^{2s}+1)},\dots,\theta^{-(q^{ts}+1)},
\] 
by Delsarte's theorem~\cite[Ch.~7, Theorem~11]{MacSlo1977}. In the same way, the code $\C_2(Y)$ can also be made cyclic with zeros
\[
\theta^{-1},\theta^{-2},\theta^{-(q^s+1)},\theta^{-(q^{2s+}1)},\dots,\theta^{-(q^{ts}+1)}.
\]
In several special cases, the distance enumerators of $\C_1(Y)$ and $\C_2(Y)$ have been obtained previously using long calculations. Those of $\C_1(Y_s(1,q,m))$ and $\C_2(Y_s(1,q,m))$ have been determined in~\cite{FenLuo2008} for prime $q$ and in~\cite{LuoFen2008} for general $q$. Moreover,~\cite{LiuLuo2014} gives the distance enumerator of $\C_1(Y_1(2,3,m))$ for odd $m$ and the distance enumerator of $\C_2(Y_1(2,3,m))$ for $m\equiv 1\pmod 4$ and~\cite{LiuLuo2014a} gives the distance enumerator of $\C_1(Y_1(2,3,m))$ for even $m$. For prime $p$ and odd $m$, the distance enumerator of $\C_1(Y_s(2,p,m))$ has been determined in~\cite{LiuHan2013} (and in disguise in~\cite{ZheWanZenHu2013}) and that of $\C_2(Y_s(2,p,m))$ has been determined in~\cite{LiuYanLiu2014}. 
\par
The authors of~\cite{ZenHuJiaYueCao2010} study codes with the same distance enumerator as $\C_2(Y_s(1,p,m))$ for prime $p$ and odd $m$. These results can also be easily reproduced using the methods of this paper.
\par
Our methods also simplify the analysis of other code families derived from quadratic forms, such as those codes considered in~\cite{LuoFen2008a} or~\cite{ZhoDinLuoZha2013}, and are useful to study sequence sets derived from quadratic forms, such as those studied in~\cite{KumMor1991}.


\appendix

\section{Computation of the $Q$-numbers}

We shall now prove Theorem~\ref{thm:eigenvalues}. We write $Q^{(m)}_{k,\e}(i,\t)$ for $Q_{k,\e}(i,\t)$ to indicate dependence on $m$. It will also be convenient to write
\begin{equation}
Q^{(m)}_{0,-1}(i,\t)=0.   \label{eqn:initial_0}
\end{equation}
We begin with the following recurrence for the $Q$-numbers.
\begin{lemma}
\label{lem:rec_P}
For $k,i\ge 1$, we have
\begin{multline*}
Q^{(m)}_{k,\e}(i,\t)=Q^{(m)}_{k,\e}(i-1,1)\\[1ex]
-\tfrac{1}{2}q^{m-i}\gamma_q^{i-1}
\Big[Q^{(m-1)}_{k-1,\e}(i-1,1)(q-\t\,\gamma_q)-(-1)^iQ^{(m-1)}_{k-1,-\e}(i-1,1)(q+\t\,\gamma_q)\Big].
\end{multline*}
\end{lemma}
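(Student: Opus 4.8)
The plan is to evaluate the $Q$-numbers directly as the matrix character sums that they are. Combining \eqref{eqn:P_numbers} with \eqref{eqn:Q_from_P} and \eqref{eqn:def_inner_product} gives
\[
Q^{(m)}_{k,\e}(i,\t)=\sum_{A\in X_{k,\e}}\chi(\tr(AB)),
\]
valid for any fixed $B\in X_{i,\t}$, since the sum depends only on the rank $i$ and type $\t$ of $B$. Using the diagonalisation of symmetric forms (\cite[Theorem~11.5]{Wan2003}) I would pick a representative $B=B'+z\,vv^{\top}$, where $B'$ has rank $i-1$ and type $1$, the vector $v$ lies in the radical of $B'$, and $\eta(z)=\t$, so that $B$ has rank $i$ and type $\t$. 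As $\tr(AB)=\tr(AB')+z\,A(v,v)$, the sum splits as
\[
Q^{(m)}_{k,\e}(i,\t)=\underbrace{\sum_{A\in X_{k,\e}}\chi(\tr(AB'))}_{=\,Q^{(m)}_{k,\e}(i-1,1)}+\sum_{A\in X_{k,\e}}\chi(\tr(AB'))\bigl(\chi(z\,A(v,v))-1\bigr),
\]
isolating the leading term $Q^{(m)}_{k,\e}(i-1,1)$ with the ambient dimension intact; everything then rests on reducing the correction to the $(m-1)$-dimensional scheme.

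For the correction I would split the ambient space along the rank/radical decomposition of $B$: a support of dimension $i$ (carrying $B'$ and the direction $v$) and a radical of dimension $m-i$. Writing $A$ in the corresponding block form and classifying, via the normal form, how the rank and type of a symmetric matrix change when it is bordered by one row and column — the rank is preserved, increases by $1$, or increases by $2$ — I would sum over the completions of $A$ in the radical directions. Two features should drive the computation: the $m-i$ free radical directions contribute the factor $q^{m-i}$ through the completion count, while the rank-increase-by-two completions are killed because the inner sum over the corresponding free diagonal entry is a complete additive character sum $\sum_{\alpha}\chi(z\alpha)=0$. Evaluating the surviving quadratic Gauss sum over the rank-$(i-1)$ support of $B'$ by means of $\sum_{x}\chi(ax^2)=\eta(a)\,\gamma_q$ (with $\gamma_q$ as in \eqref{eqn:def_gauss_sum}) should produce the power $\gamma_q^{i-1}$.

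The two terms $Q^{(m-1)}_{k-1,\pm\e}(i-1,1)$ would then appear by separating the surviving completions according to the type they induce: writing the indicator of $\eta(\gamma)=\nu$ as $\tfrac12(1+\nu\,\eta(\gamma))$ and summing $\chi$ against the two characters $1$ and $\eta$ yields the prefactors $q\mp\t\,\gamma_q$, while the sign $(-1)^i$ emerges from the bookkeeping of $\eta(-1)$ against the parity of the support dimension. Collecting these contributions should assemble the stated recurrence.

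I expect the main obstacle to be the completion count itself: determining, for a symmetric matrix of prescribed rank and type, how many extensions into the $m-i$ radical directions have a given total rank and type, and tracking the type faithfully through the bordering classification. This is precisely where the coefficients $q^{m-i}\gamma_q^{i-1}$ and $(-1)^i(q\pm\t\,\gamma_q)$ are generated, and keeping the two types correctly separated (rather than collapsing onto the wrong one of $Q^{(m-1)}_{k-1,\e}$ or $Q^{(m-1)}_{k-1,-\e}$) is the delicate point. If a direct count proves unwieldy, I would instead derive the simpler recurrence obtained by peeling a single support coordinate and then convert it to the stated form using the formal self-duality $Q^{(m)}_{k,\e}(i,\t)=\overline{Q^{(m)}_{i,\t}(k,\e)}$ (which follows from \eqref{eqn:Q_from_P}) together with $\overline{\gamma_q}=\eta(-1)\gamma_q$ and $\gamma_q^2=\eta(-1)q$.
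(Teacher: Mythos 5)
Your opening move coincides with the paper's: expressing $Q^{(m)}_{k,\e}(i,\t)=\sum_{A\in X_{k,\e}}\chi(\tr(AB))$ for a representative $B=B'+z\,vv^{\top}$ and peeling off $Q^{(m)}_{k,\e}(i-1,1)$, so that everything reduces to the correction term $\sum_{A\in X_{k,\e}}\chi(\tr(AB'))\bigl(\chi(zA(v,v))-1\bigr)$. The genuine gap is in your plan for this correction. Splitting the ambient space along the support/radical of $B$ and ``summing over completions of $A$ in the radical directions'' requires knowing, for each $i\times i$ symmetric matrix of prescribed rank and type, how many extensions to an element of $X^{(m)}_{k,\e}$ exist; you name this as the main obstacle but do not resolve it, and these extension numbers are essentially as hard as the $Q$-numbers themselves (they underlie the antidesign argument of Theorem~\ref{thm:designs}). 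Moreover this route yields a weighted sum over $i\times i$ matrices, and it is not visible how the quantities $Q^{(m-1)}_{k-1,\pm\e}(i-1,1)$ --- character sums over $(m-1)\times(m-1)$ matrices of rank $k-1$ --- would emerge from it. Several of your heuristic attributions also do not match the actual mechanism: the factor $q^{m-i}$ is not a completion count; nothing needs to be ``killed'' by a complete character sum, since the summand already vanishes whenever $A(v,v)=0$ because of the factor $\chi(zA(v,v))-1$; and the sign $(-1)^i$ arises as $\eta(a)^{i-1}$ for nonsquare $a$, not from a parity bookkeeping of $\eta(-1)$.

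The device you are missing is to decompose the \emph{summation variable} $A$, not the fixed form $B$. In a basis whose first vector carries the entry $z$ of the representative, write $a$ for the $(1,1)$-entry of $A$, $w\in\F_q^{m-1}$ for the rest of its first column, and $B$ for its lower-right block. Terms with $a=0$ contribute nothing; for $a\neq 0$ the congruence $A\mapsto L^{\top}AL$, with $L$ the unipotent matrix clearing the first row and column, shows that $A\mapsto(a,w,C)$ with $C=B-a^{-1}ww^{\top}$ is a bijection onto triples with $w\in\F_q^{m-1}$ and $C\in X^{(m-1)}_{k-1,\e\eta(a)}$. The sum over the border vector $w$ against the rank-$(i-1)$ diagonal form factors into $m-i$ free coordinates and $i-1$ quadratic Gauss sums, giving $q^{m-i}(\eta(a)\gamma_q)^{i-1}$; the sum over $C$ gives $Q^{(m-1)}_{k-1,\e}(i-1,1)$ or $Q^{(m-1)}_{k-1,-\e}(i-1,1)$ according to whether $a$ is a square; and the sums of $1-\chi(za)$ over the nonzero squares and nonsquares supply the factors $\tfrac12(q\mp\t\,\gamma_q)$. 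Without this congruence reduction of $A$, your plan does not assemble the stated recurrence; your fallback of ``peeling a single support coordinate'' points in this direction but is not carried out, and the appeal to formal self-duality is not needed.
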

\begin{proof}
Let $X^{(m)}_{i,\t}$ be the set of $m\times m$ symmetric matrices of rank $i$ and type $\t$. Fix $i\in\{1,\dots,m\}$ and $\t\in\{-1,1\}$. Let $S$ be an $m\times m$ diagonal matrix of rank $i$ with diagonal $[z,1,\dots,1,0,\dots,0]$ such that $\eta(z)=\t$, and let $S'$ be the $(m-1)\times(m-1)$ diagonal matrix of rank $i-1$ with diagonal $[1,\dots,1,0,\dots,0]$. From~\eqref{eqn:Q_from_P} and~\eqref{eqn:P_numbers} we have
\begin{align}
Q^{(m)}_{k,\e}(i-1,1)-Q^{(m)}_{k,\e}(i,\t)&=\sum_{A\in X^{(m)}_{k,\e}}\big(\ang{B,S'}-\ang{A,S}\big)   \label{eqn:diff_P}\\
&=\sum_{A\in X^{(m)}_{k,\e}}\ang{B,S'}\big(1-\chi(za)\big),   \label{eqn:sum_AB}
\end{align}
where we write $A$ as
\begin{equation}
A=\begin{bmatrix}
a & v^T\\
v & B
\end{bmatrix}   \label{eqn:def_A}
\end{equation}
for some $a\in\F_q$, some $v\in\F_q^{m-1}$, and some $(m-1)\times(m-1)$ symmetric matrix $B$ over $\F_q$. The summand in~\eqref{eqn:sum_AB} is zero for $a=0$, so assume that $a$ is nonzero. Writing
\[
L=\begin{bmatrix}
1 & -a^{-1}v^T\\
0 & I
\end{bmatrix},
\]
we have
\[
L^TAL=\begin{bmatrix}
a & 0\\
0 & C
\end{bmatrix},\quad\text{where}\quad C=B-a^{-1}vv^T.
\]
Note that $L$ is nonsingular. Let $\mathcal{Q}$ and $\mathcal{N}$ be the set of squares and nonsquares of $\F_q^*$, respectively. As $(a,C)$ ranges over
\[
\mathcal{Q} \times X^{(m-1)}_{k-1,\e} \;\; \cup \;\; \mathcal{N}\times X^{(m-1)}_{k-1,-\e}
\]
and $v$ ranges over $\F_q^{m-1}$, the matrix $A$, given in~\eqref{eqn:def_A}, ranges over all elements of $X^{(m)}_{k,\e}$, except for those matrices~\eqref{eqn:def_A} satisfying $a=0$. Hence, using~\eqref{eqn:ip_homomorphism}, the sum~\eqref{eqn:sum_AB} becomes
\begin{equation}
\begin{split}
&\sum_{a\in \mathcal{Q}}\big(1-\chi(za)\big)\sum_{C\in X^{(m-1)}_{k-1,\e}}\ang{C,S'}\sum_{v\in\F_q^{m-1}}\ang{a^{-1}vv^T,S'}\\
+&\sum_{a\in \mathcal{N}}\big(1-\chi(za)\big)\sum_{C\in X^{(m-1)}_{k-1,-\e}}\ang{C,S'}\sum_{v\in\F_q^{m-1}}\ang{a^{-1}vv^T,S'}.
\end{split}
\label{eqn:sum_AB_2}
\end{equation}
From~\eqref{eqn:Q_from_P} and~\eqref{eqn:P_numbers} we have
\begin{equation}
\sum_{C\in X^{(m-1)}_{k-1,\e}}\ang{C,S'}=Q^{(m-1)}_{k-1,\e}(i-1,1).   \label{eqn:Pmki_minus_one}
\end{equation}
Furthermore,
\begin{align}
\sum_{v\in\F_q^{m-1}}\ang{a^{-1}vv^T,S'}&=q^{m-i}\sum_{v_1,\dots,v_{i-1}\in\F_q}\chi(a^{-1}(v_1^2+\cdots+v_{i-1}^2))   \nonumber\\
&=q^{m-i}\Bigg(\sum_{v\in\F_q}\chi(a^{-1}v^2)\Bigg)^{i-1}.   \label{eqn:sum_v}
\end{align}
Putting $\eta(0)=0$, the summation becomes
\begin{align*}
\sum_{v\in\F_q}\chi(a^{-1}v^2)&=\sum_{y\in\F_q}(1+\eta(y))\chi(a^{-1}y)\\
&=\sum_{y\in\F_q}(1+\eta(ay))\chi(y)\\
&=\eta(a)\,\gamma_q,
\end{align*}
using $\sum_{y\in\F_q}\chi(y)=0$ and the definition~\eqref{eqn:def_gauss_sum} of the Gauss sum~$\gamma_q$. Substitute into~\eqref{eqn:sum_v} and then~\eqref{eqn:sum_v} and~\eqref{eqn:Pmki_minus_one} into~\eqref{eqn:sum_AB_2} to deduce that~\eqref{eqn:diff_P} equals
\[
q^{m-i}\gamma_q^{i-1}\Big[Q^{(m-1)}_{k-1,\e}(i-1,1)\sum_{a\in \mathcal{Q}}\big(1-\chi(za)\big)-(-1)^iQ^{(m-1)}_{k-1,-\e}(i-1,1)\sum_{a\in \mathcal{N}}\big(1-\chi(za)\big)\Big].
\]
To complete the proof, recall that $\abs{\mathcal{Q}}=\abs{\mathcal{N}}=(q-1)/2$ and observe that
\begin{align*}
\sum_{a\in \mathcal{Q}}\big(1-\chi(za)\big)&=\frac{q-1}{2}-\sum_{a\in\F_q^*}\frac{1+\eta(a)}{2}\chi(za)\\
&=\frac{q}{2}-\frac{1}{2}\sum_{a\in\F_q^*}\eta(a)\chi(za)\\
&=\frac{q}{2}-\frac{1}{2}\eta(z)\,\gamma_q
\intertext{and similarly}
\sum_{a\in \mathcal{N}}\big(1-\chi(za)\big)&=\frac{q}{2}+\frac{1}{2}\eta(z)\,\gamma_q,
\end{align*}
as required.
\end{proof}
\par
The $Q$-numbers of $X(m,q)$ are determined by the recurrence of Lemma~\ref{lem:rec_P} together with the initial numbers $Q^{(m)}_{k,\e}(0,1)$ and $Q^{(m)}_{0,1}(i,\t)$. From~\eqref{eqn:Q_from_P} and~\eqref{eqn:P_numbers} we find that these initial numbers satisfy
\begin{align}
Q^{(m)}_{0,1}(i,\t)&=1,   \label{eqn:Q_initial_1}\\
Q^{(m)}_{k,\e}(0,1)&=v^{(m)}(k,\e),   \label{eqn:Q_initial_2}
\end{align}
where we write $v^{(m)}(i,\t)$ for $v(i,\t)$, the number of $m\times m$ symmetric matrices over $\F_q$ of rank $i$ and type $\t$. 
\par
In order to solve the recurrence in Lemma~\ref{lem:rec_P}, we shall first obtain explicit expressions for the numbers
\begin{align}
S^{(m)}_k(i,\t)&=Q^{(m)}_{k,1}(i,\t)+Q^{(m)}_{k,-1}(i,\t),   \label{eqn:def_S}\\[1ex]
R^{(m)}_k(i,\t)&=Q^{(m)}_{k,1}(i,\t)-Q^{(m)}_{k,-1}(i,\t)   \label{eqn:def_R}
\end{align}
in the following two lemmas. Since
\[
2Q^{(m)}_{k,\e}(i,\t)=S^{(m)}_k(i,\t)+\e\,R^{(m)}_{k}(i,\t)
\]
we then directly obtain Theorem~\ref{thm:eigenvalues}.  We shall repeatedly use the following elementary identity for the Gauss sum $\gamma_q$, defined in~\eqref{eqn:def_gauss_sum},
\begin{equation}
\gamma_q^2=\eta(-1)q   \label{eqn:gauss_square}
\end{equation}
(see~\cite[Theorem~5.12]{LidNie1997}, for example).
\begin{lemma}
\label{lem:S}
For $k,i\ge 1$, the numbers $S^{(m)}_k(i,\t)$ are given by
\begin{align}
S^{(m)}_{2r+1}(2s+1,\t)&=-q^{2r}F^{(m-1)}_r(s),   \label{eqn:S1}\\[1.5ex]
S^{(m)}_{2r}(2s+1,\t)&=q^{2r}F^{(m-1)}_r(s),   \label{eqn:S2}\\[1.5ex]
S^{(m)}_{2r+1}(2s,\t)&=-q^{2r}F^{(m-1)}_r(s-1)+\t\,\eta(-1)^sq^{m-s+2r}F^{(m-2)}_r(s-1),   \label{eqn:S3}\\[1.5ex]
S^{(m)}_{2r}(2s,\t)&=q^{2r}F^{(m-1)}_r(s-1)-\t\,\eta(-1)^sq^{m-s+2r-2}F^{(m-2)}_{r-1}(s-1).   \label{eqn:S4}
\end{align}
\end{lemma}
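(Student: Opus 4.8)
The plan is to deduce everything from the recurrence of Lemma~\ref{lem:rec_P} by summing over the type $\e$. Since both $\sum_{\e\in\{-1,1\}}Q^{(m-1)}_{k-1,\e}(i-1,1)$ and $\sum_{\e\in\{-1,1\}}Q^{(m-1)}_{k-1,-\e}(i-1,1)$ equal $S^{(m-1)}_{k-1}(i-1,1)$, adding the two cases $\e=\pm1$ of Lemma~\ref{lem:rec_P} gives
\[
S^{(m)}_k(i,\t)=S^{(m)}_k(i-1,1)-\tfrac12 q^{m-i}\gamma_q^{i-1}\,S^{(m-1)}_{k-1}(i-1,1)\big[(q-\t\gamma_q)-(-1)^i(q+\t\gamma_q)\big].
\]
The bracket equals $2q$ for odd $i$ and $-2\t\gamma_q$ for even $i$, so collapsing the powers of $\gamma_q$ via $\gamma_q^2=\eta(-1)q$ from~\eqref{eqn:gauss_square} yields the two clean recurrences
\begin{align*}
S^{(m)}_k(2s+1,\t)&=S^{(m)}_k(2s,1)-\eta(-1)^sq^{m-s}\,S^{(m-1)}_{k-1}(2s,1),\\
S^{(m)}_k(2s,\t)&=S^{(m)}_k(2s-1,1)+\t\,\eta(-1)^sq^{m-s}\,S^{(m-1)}_{k-1}(2s-1,1).
\end{align*}
Observe that the first recurrence already makes $S^{(m)}_k(2s+1,\t)$ independent of $\t$, exactly as \eqref{eqn:S1} and~\eqref{eqn:S2} demand.

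Both recurrences express the value at rank index $i$ through values at the strictly smaller index $i-1$ (in dimensions $m$ and $m-1$), so I would prove \eqref{eqn:S1}--\eqref{eqn:S4} by induction on $i$, uniformly in $m$, $k$, and $\t$. The induction is seeded by the valency data at $i=0$ via~\eqref{eqn:Q_initial_2}, and the first genuine step is $i=1$ (the case $s=0$ of the odd recurrence), which reads $S^{(m)}_k(1,\t)=v^{(m)}(k)-q^m v^{(m-1)}(k-1)$ with $v^{(m)}(k)=v(k,1)+v(k,-1)$. This must be matched with $\mp q^{2r}F^{(m-1)}_r(0)$ according as $k=2r+1$ or $k=2r$ (the $s=0$ specialisations of \eqref{eqn:S1}/\eqref{eqn:S2}); this is a pure valency identity, to be checked from Proposition~\ref{pro:num_sym_mat} together with the evaluation of $F^{(m-1)}_r(0)$ given by the $s=0$ case of the defining relations~\eqref{eqn:ev_transform}.

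For $i>1$ I would substitute the formulas \eqref{eqn:S1}--\eqref{eqn:S4} at index $i-1$ (in both dimensions $m$ and $m-1$) into the right-hand sides. The even recurrence is immediate: for $k=2r$ and $k=2r+1$ the substitution reproduces \eqref{eqn:S4} and \eqref{eqn:S3} verbatim, because the lower-index values needed are the $\t$-free expressions \eqref{eqn:S1}/\eqref{eqn:S2}. The odd recurrence carries the real content: after substitution the $F^{(m-2)}$ terms cancel in both cases $k=2r$ and $k=2r+1$, and what remains collapses, in each case, to the single identity
\[
F^{(M)}_r(s-1)-F^{(M)}_r(s)=q^{2M-2s-1}\,F^{(M-2)}_{r-1}(s-1),\qquad M=m-1.
\]

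To prove this identity I would work straight from the definition of $F^{(M)}_r$. Applying Pascal's rule~\eqref{eqn:Pascal_triangle} in the form ${n-s+1\brack j}-{n-s\brack j}=q^{2(n-s+1-j)}{n-s\brack j-1}$ to the left-hand difference and then shifting $j\mapsto j+1$ turns each summand into the matching summand of $F^{(M-2)}_{r-1}(s-1)$; the signs and $q^2$-binomials agree immediately, and the surviving powers of $q$ coincide once one checks that the exponents contributed by $c=q^{M(M-1)/(2n)}$ and by $c'=q^{(M-2)(M-3)/(2(n-1))}$ match, which I would confirm separately for $M$ even and $M$ odd (with $n=\lfloor M/2\rfloor$). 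The main obstacle is twofold and lies at the boundary rather than in the bulk: first, recognising that all four cases funnel into this one Krawtchouk identity and carrying out the cancellations cleanly; and second, pinning down the seed, namely the valency identity at $i=1$ together with the degenerate small-dimension conventions (e.g.\ $n=0$, or $r>\lfloor M/2\rfloor$, where the relevant $q^2$-binomials and hence the $F$-values vanish), so that the induction is genuinely anchored.
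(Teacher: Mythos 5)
Your proposal is correct and follows essentially the same route as the paper: both sum the recurrence of Lemma~\ref{lem:rec_P} over $\e$ (using $\gamma_q^2=\eta(-1)q$) to get the clean rank-lowering recurrences, seed the induction with the valency computation at rank $0$ and $1$ via Proposition~\ref{pro:num_sym_mat}, and reduce the inductive step to a Pascal-type identity for the $F^{(m)}_r$ proved from~\eqref{eqn:Pascal_triangle}. The only (cosmetic) difference is that the paper substitutes the recurrence into itself to obtain a two-step recurrence linking $S^{(m)}_k(2s+1,\cdot)$ to $S^{(m-2)}_{k-2}(2s-1,\cdot)$ and settles \eqref{eqn:S1}--\eqref{eqn:S2} first, whereas you induct on $i$ directly, alternating odd and even ranks; the identity $F^{(M)}_r(s-1)-F^{(M)}_r(s)=q^{2M-2s-1}F^{(M-2)}_{r-1}(s-1)$ you isolate is exactly the one hidden in the paper's induction.
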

\begin{proof}
Write
$s=\lfloor i/2\rfloor$ and
\[
n=\floor{(m-1)/2}\quad\text{and}\quad c=q^{(m-1)(m-2)/(2n)}.
\]
From Lemma~\ref{lem:rec_P} we find that
\begin{equation}
S^{(m)}_k(i,\t)=S^{(m)}_k(i-1,1)-(-\t)^{i+1}\,q^{m-2s}\gamma_q^{2s}\,S^{(m-1)}_{k-1}(i-1,1).   \label{eqn:rec_S}
\end{equation}
Hence, by~\eqref{eqn:Q_initial_2} and~\eqref{eqn:def_S},
\[
S_k^{(m)}(1,\t)=v^{(m)}(k,1)+v^{(m)}(k,-1)-q^m\big[v^{(m-1)}(k-1,1)+v^{(m-1)}(k-1,-1)\big].
\]
From Proposition~\ref{pro:num_sym_mat} we then find that
\[
S^{(m)}_{2r}(1,\t)=-S^{(m)}_{2r+1}(1,\t)=\frac{1}{q^r}\,\frac{(q^m-q)(q^{m}-q^2)\cdots(q^m-q^{2r})}{(q^{2r}-1)(q^{2r}-q^2)\cdots(q^{2r}-q^{2r-2})},
\]
which we can write as
\[
q^{2r}{n\brack r}\prod_{j=0}^{r-1}(c-q^{2j}).
\]
Apply the identities~\eqref{eqn:cross_identity} and~\eqref{eqn:q-binomial} to $F^{(m-1)}_r(0)$ to see that~\eqref{eqn:S1} and~\eqref{eqn:S2} hold for $s=0$. Moreover, it is easy to see from~\eqref{eqn:initial_0} and~\eqref {eqn:Q_initial_1} that~\eqref{eqn:S2} also holds for $r=0$. Now substitute the recurrence~\eqref{eqn:rec_S} into itself and use~\eqref{eqn:gauss_square} to obtain
\[
S^{(m)}_k(2s+1,\t)=S^{(m)}_k(2s-1,1)-cq^{2(n-s+1)}\,S^{(m-2)}_{k-2}(2s-1,1).
\]
Using~\eqref{eqn:Pascal_triangle}, we readily verify by induction that~\eqref{eqn:S1} and~\eqref{eqn:S2} hold for all $r,s\ge 0$. The identities~\eqref{eqn:S3} and~\eqref{eqn:S4} then follow from~\eqref{eqn:rec_S} and~\eqref{eqn:gauss_square}.
\end{proof}
\par
\begin{lemma}
For $k,i\ge 1$, the numbers $R^{(m)}_k(i,\t)$ are given by
\begin{align}
R^{(m)}_{2r+1}(2s+1,\t)&=\t\,\eta(-1)^{r+s}\, q^{m-s+r-1}\,\gamma_q\,F^{(m-1)}_r(s),   \label{eqn:R1}\\[1.5ex]
R^{(m)}_{2r}(2s+1,\t)&=\eta(-1)^rq^rF^{(m)}_r(s),   \label{eqn:R2}\\[1.5ex]
R^{(m)}_{2r+1}(2s,\t)&=0,   \label{eqn:R3}\\[1.5ex]
R^{(m)}_{2r}(2s,\t)&=\eta(-1)^rq^rF^{(m)}_r(s).   \label{eqn:R4}
\end{align}
\end{lemma}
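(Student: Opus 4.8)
The plan is to mirror the proof of Lemma~\ref{lem:S}, working with the differences $R^{(m)}_k(i,\t)$ in place of the sums $S^{(m)}_k(i,\t)$. First I would subtract the $\e=-1$ instance of the recurrence of Lemma~\ref{lem:rec_P} from its $\e=1$ instance. Since the $\e=-1$ bracket arises from the $\e=1$ bracket by interchanging the two summands $Q^{(m-1)}_{k-1,\pm1}(i-1,1)$, the difference of the brackets collapses to $R^{(m-1)}_{k-1}(i-1,1)\big[(q-\t\gamma_q)+(-1)^i(q+\t\gamma_q)\big]$, yielding
\[
R^{(m)}_k(i,\t)=R^{(m)}_k(i-1,1)-\tfrac12 q^{m-i}\gamma_q^{i-1}\big[(q-\t\gamma_q)+(-1)^i(q+\t\gamma_q)\big]R^{(m-1)}_{k-1}(i-1,1).
\]
On distinguishing the parity of $i$ and using \eqref{eqn:gauss_square}, this becomes
\[
R^{(m)}_k(2s,\t)=R^{(m)}_k(2s-1,1)-q^{m-2s+1}\gamma_q^{2s-1}R^{(m-1)}_{k-1}(2s-1,1)
\]
for even $i=2s$, and
\[
R^{(m)}_k(2s+1,\t)=R^{(m)}_k(2s,1)+\t\,q^{m-2s-1}\gamma_q^{2s+1}R^{(m-1)}_{k-1}(2s,1)
\]
for odd $i=2s+1$. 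In contrast to the situation for $S^{(m)}_k$, the factor $\t$ now attaches to the odd-rank step.

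For the initial data I would use \eqref{eqn:Q_initial_1} and \eqref{eqn:initial_0}, which give $R^{(m)}_0(i,\t)=1=F^{(m)}_0(s)$ and thereby settle $k=0$ (equivalently $r=0$ in \eqref{eqn:R2} and \eqref{eqn:R4}). For $i=1$ (that is, $s=0$) the recurrence reduces everything to the rank-$0$ valency differences $v^{(m)}(k,1)-v^{(m)}(k,-1)$, which by Proposition~\ref{pro:num_sym_mat} vanish for odd $k$ and equal $\eta(-1)^r$ times a product for $k=2r$; the very same evaluation of $F^{(m)}_r(0)$ and $F^{(m-1)}_r(0)$ via \eqref{eqn:cross_identity} and \eqref{eqn:q-binomial} used in the proof of Lemma~\ref{lem:S} then confirms \eqref{eqn:R1} and \eqref{eqn:R2} at $s=0$.

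The inductive engine is then remarkably clean, and I would run a single induction on $s$. Granting \eqref{eqn:R1} and \eqref{eqn:R2} at $s-1$ (for all $m$, $k$), the even-rank step produces \eqref{eqn:R3} and \eqref{eqn:R4} at $s$: for $k=2r+1$ the two contributions cancel outright (the term $R^{(m)}_{2r+1}(2s-1,1)$ from \eqref{eqn:R1} against the $R^{(m-1)}_{2r}(2s-1,1)$ term from \eqref{eqn:R2}), giving $R^{(m)}_{2r+1}(2s,\t)=0$; for $k=2r$ one is left with $\eta(-1)^r q^r\big(F^{(m)}_r(s-1)-q^{2m-2s-1}F^{(m-2)}_{r-1}(s-1)\big)$. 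Feeding \eqref{eqn:R3} and \eqref{eqn:R4} at $s$ back through the odd-rank step immediately yields \eqref{eqn:R1} and \eqref{eqn:R2} at $s$, because exactly one of the two terms survives in each case: the term $R^{(m)}_{2r+1}(2s,1)$ vanishes by \eqref{eqn:R3} when $k$ is odd, and the term $R^{(m-1)}_{2r-1}(2s,1)$ vanishes by \eqref{eqn:R3} when $k$ is even, and the surviving term reproduces the stated expression after absorbing $\gamma_q^{2s+1}=\eta(-1)^s q^s\gamma_q$.

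The one place where a genuine identity is needed---and the step I expect to be the main obstacle---is matching the even-rank output for $k=2r$ to \eqref{eqn:R4}, namely proving
\[
F^{(m)}_r(s)=F^{(m)}_r(s-1)-q^{2m-2s-1}F^{(m-2)}_{r-1}(s-1).
\]
I would verify this directly from the definition of $F^{(m)}_r$: applying the first identity in \eqref{eqn:Pascal_triangle} to ${n-s+1\brack j}$ shows that $F^{(m)}_r(s)-F^{(m)}_r(s-1)$ is a sum over $j\ge1$ carrying the factor $-q^{2(n-s+1-j)}{n-s\brack j-1}$; reindexing $j\mapsto j+1$ then matches it term-by-term against $F^{(m-2)}_{r-1}(s-1)$, whose parameters satisfy $n'=n-1$, the powers of $q$ agreeing because the constants satisfy $c=q^{2m-1-2n}$ and $c/c'=q^2$ in both parities of $m$. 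This is the exact analogue of the Pascal-triangle verification invoked for \eqref{eqn:S1}--\eqref{eqn:S4}. Once it is in hand, Lemma~\ref{lem:S} together with the relation $2Q^{(m)}_{k,\e}(i,\t)=S^{(m)}_k(i,\t)+\e\,R^{(m)}_k(i,\t)$ completes the proof of Theorem~\ref{thm:eigenvalues}.
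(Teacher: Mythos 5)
Your proof is correct and takes essentially the same route as the paper: the same recurrence obtained by differencing Lemma~\ref{lem:rec_P} over $\e$, the same initial data from the valency differences, and an induction on $s$ that ultimately rests on the identity $F^{(m)}_r(s)=F^{(m)}_r(s-1)-q^{2m-2s-1}F^{(m-2)}_{r-1}(s-1)$; the paper merely composes your two half-steps into a single two-step recurrence on even ranks (with $m\to m-2$, $k\to k-2$) before inducting, and bases the induction at rank $0$ rather than rank $1$. (One cosmetic quibble: the factor $q^{2(n-s+1-j)}$ you invoke comes from the second, not the first, equality in~\eqref{eqn:Pascal_triangle}.)
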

\begin{proof}
Write
\[
n=\floor{m/2}\quad\text{and}\quad c=q^{m(m-1)/(2n)}.
\]
From~\eqref{eqn:Q_initial_2} and~\eqref{eqn:def_R} we have
\[
R^{(m)}_k(0,1)=v^{(m)}(k,1)-v^{(m)}(k,-1).
\]
From Proposition~\ref{pro:num_sym_mat} we then find that
\[
R^{(m)}_{2r+1}(0,1)=0
\]
and
\begin{align*}
R^{(m)}_{2r}(0,1)&=\eta(-1)^rq^r{n\brack r}\prod_{j=0}^{r-1}(c-q^{2j})   \nonumber\\[1ex]
&=\eta(-1)^rq^rF^{(m)}_r(0),
\end{align*}
as in the proof of Lemma~\ref{lem:S}. Hence~\eqref{eqn:R3} and~\eqref{eqn:R4} hold for $s=0$. Moreover~\eqref{eqn:R4} trivially holds for $r=0$. Writing $s=\lfloor (i+1)/2\rfloor$, we find from Lemma~\ref{lem:rec_P} that
\begin{equation}
R^{(m)}_k(i,\t)=R^{(m)}_k(i-1,1)-(-\t)^i\,q^{m-2s+1}\gamma_q^{2s-1}\,R^{(m-1)}_{k-1}(i-1,1),   \label{eqn:rec_R}
\end{equation}
which, after self-substitution, gives
\[
R^{(m)}_k(2s,\d)=R^{(m)}_k(2s-2,1)-\eta(-1)\,qc\,q^{2(n-s)}\,R^{(m-2)}_{k-2}(2s-2,1),
\]
using~\eqref{eqn:gauss_square}. With this recurrence,~\eqref{eqn:R3} immediately follows and~\eqref{eqn:R4} can be verified by induction, as in the proof of Lemma~\ref{lem:S}. The identities~\eqref{eqn:R1} and~\eqref{eqn:R2} then follow from~\eqref{eqn:rec_R} and~\eqref{eqn:gauss_square}.
\end{proof}


\section*{Acknowledgements}

I would like to thank Rod Gow and Tor Helleseth for fruitful discussions, which inspired some of the results in this paper. In particular, Rod Gow proved independently with a different technique that Theorem~\ref{thm:bound_sg} holds for $d=m-1$ and $d=m-2$.



\end{document}